\documentclass{article}

\usepackage[utf8]{inputenc}
\usepackage{amsmath,amsthm}
\usepackage{amssymb}
\usepackage{mathrsfs}
\usepackage{listings}
\usepackage{graphicx,multirow}
\usepackage{color,soul}
\usepackage{esint}
\usepackage{algorithm}
\usepackage{algpseudocode}
\usepackage[margin=0.98in]{geometry}
\usepackage{hyperref}
\usepackage{stmaryrd} 
\usepackage{xspace}
\usepackage{xr}
\usepackage{titling}

\hypersetup{hidelinks,colorlinks,citecolor=magenta,linkcolor=blue}
\usepackage{biblatex}
\addbibresource{references.bib}
\usepackage{etoolbox}
\makeatletter
\patchcmd{\thebibliography}{%
   \section*{\refname}\@mkboth{\MakeUppercase\refname}{\MakeUppercase\refname}%
}{}{}{}
\makeatother
\usepackage[font=small,labelfont=bf]{caption}
\usepackage{authblk}
\captionsetup{width=.95\textwidth}
\usepackage{fancyhdr}
\usepackage{subfigure,bm,bbm}
\usepackage[normalem]{ulem}
\DeclareMathOperator\sign{sign}
\DeclareMathOperator\diag{diag}



\newcommand{\net}{DSP-WENO\xspace}

\newcommand{\con}{{\bm{u}}}
\newcommand{\f}{{\bm{f}}}
\newcommand{\ent}{{\bm{v}}}
\newcommand{\z}{{\bm{z}}}
\newcommand{\iph}{{i + \frac{1}{2}}}
\newcommand{\imh}{{i - \frac{1}{2}}}
\newcommand{\velvec}{\textbf{u}}
\newcommand{\vel}{\text{u}}
\newcommand{\temp}{\mathcal{T}}
\newcommand{\N}{{\bm{\mathcal{N}}}}
\newcommand{\Ro}{{\mathbb{R}}}
\newcommand{\bPhi}{{\boldsymbol{\Phi}}}
\newcommand{\Ff}{{\bm{\mathcal{F}}}}
\newcommand{\Ss}{{\bm{\mathcal{S}}}}
\newcommand{\V}{{\bm{\mathcal{V}}}}
\newcommand{\argmin}[1]{\underset{#1}{\arg\min}}
\newcommand{\chf}[1]{ { \mathcal{X} }_{#1}}

\newtheorem{lem}{Lemma}

\newtheorem{defn}{Definition}
\newtheorem{remark}{Remark}

\usepackage{tikz}
\sloppy
\usetikzlibrary{decorations.pathreplacing,calc}
\newcommand{\tikzmark}[1]{\tikz[overlay,remember picture] \node (#1) {};}

\newcommand*{\AddNote}[4]{%
    \begin{tikzpicture}[overlay, remember picture]
        \draw [decoration={brace,amplitude=0.5em},decorate,ultra thick]
            ($(#3)!(#1.north)!($(#3)-(0,1)$)$) --  
            ($(#3)!(#2.south)!($(#3)-(0,1)$)$)
                node [align=center, text width=2.5cm, pos=0.5, anchor=west] {#4};
    \end{tikzpicture}
}%

\usepackage{amsfonts}
\usepackage{graphicx}
\usepackage{epstopdf}
\ifpdf
  \DeclareGraphicsExtensions{.eps,.pdf,.png,.jpg}
\else
  \DeclareGraphicsExtensions{.eps}
\fi



\begin{document}

\title{Learning WENO for entropy stable schemes to solve conservation laws}
\author[1]{Philip Charles \thanks{charlesp@umd.edu}}
\author[1]{Deep Ray \thanks{deepray@umd.edu}}
\affil[1]{Department of Mathematics, University of Maryland at College Park}
\date{}

\maketitle

\begin{abstract}
Entropy conditions play a crucial role in the extraction of a physically relevant solution for systems of conservation laws, thus motivating the construction of entropy stable schemes that satisfy a discrete analogue of such conditions. TeCNO schemes (Fjordholm et al. 2012) \cite{fjordholm2012arbitrarily} form a class of arbitrary high-order entropy stable finite difference solvers, which require specialized reconstruction algorithms satisfying the \textit{sign property} at each cell interface. Third-order weighted essentially non-oscillatory (WENO) schemes called SP-WENO (Fjordholm and Ray, 2016) \cite{fjordholm2016sign} and SP-WENOc (Ray, 2018) \cite{ray2018third} have been designed to satisfy the sign property. However, these WENO algorithms can perform poorly near shocks, with the numerical solutions exhibiting large spurious oscillations. In the present work, we propose a variant of the SP-WENO, termed as Deep Sign-Preserving WENO (\net), where a neural network is trained to learn the WENO weighting strategy. The sign property and third-order accuracy are strongly imposed in the algorithm, which constrains the WENO weight selection region to a convex polygon. Thereafter, a neural network is trained to select the WENO weights from this convex region with the goal of improving the shock-capturing capabilities without sacrificing the rate of convergence in smooth regions. The proposed synergistic approach retains the mathematical framework of the TeCNO scheme while integrating deep learning to remedy the computational issues of the WENO-based reconstruction. We present several numerical experiments to demonstrate the significant improvement with \net over the existing variants of WENO satisfying the sign property.
\end{abstract}

\section{Introduction}

Hyperbolic systems of conservation laws dictate the behavior of quantities that are conserved in time as the system evolves. These systems of partial differential equations (PDEs) are ubiquitous across many disciplines. A generic system of conservation laws in one spatial dimension can be mathematically expressed as

\begin{equation}\label{eqn:cons_law}
\begin{aligned}
\frac{\partial}{\partial t}\con(x,t) + \frac{\partial}{\partial x}\f(\con(x,t)) &= \mathbf{0} \quad &&\forall \ x \in \Omega \subset \mathbb{R}, \ t \in \mathbb{R}^+ \\
\con(x,0) &= \con_0(x) \quad &&\forall \ x \in \Omega,
\end{aligned}
\end{equation}

\noindent where $\con$: $\Omega\times\mathbb{R}^+ \xrightarrow{} \mathbb{R}^d$ represents a vector of $d$ conserved variables and $\f$: $\mathbb{R}^d \xrightarrow{} \mathbb{R}^d$ is a smooth flux, representing the flow of the conserved quantities.

However, it is well-known that solutions to nonlinear conservation laws can develop discontinuities in finite time even with smooth initial conditions. Thus, solutions to \eqref{eqn:cons_law} must be understood in the weak (distributional) sense. Moreover, since weak solutions are not unique in general, additional constraints in terms of the \textit{entropy conditions} must be imposed such that a physically relevant weak solution is extracted. Assume that for \eqref{eqn:cons_law}, $\eta(\con)$ is a convex entropy function and $q(\con)$ is the entropy flux function satisfying the compatibility condition $\nabla_{\con} q(\con)^\top = \nabla_{\con} \eta(\con)^\top \nabla_\con \f(\con)$.
The solution $\con$ of \eqref{eqn:cons_law} is said to be an entropy solution if it satisfies the following inequality for all admissible entropy pairs $(\eta(\con),q(\con))$ associated with \eqref{eqn:cons_law}: 
\begin{equation}\label{eqn:ent_ineq}
\frac{\partial}{\partial t}\eta(\con) + \frac{\partial}{\partial x}q(\con) \leq 0,
\end{equation}
which is understood in the weak sense. The entropy is conserved for smooth solutions, and thus \eqref{eqn:ent_ineq} is taken to be an equality, i.e., $\eta$ is the solution of a conservation law. In contrast, entropy is dissipated near shocks, thus the inequality in \eqref{eqn:ent_ineq} is strict near such discontinuities. Scalar conservation laws have unique entropy solutions \cite{Kruzkov}, but the uniqueness 
is not guaranteed for general systems of conservation laws \cite{deLellis2010,Chiodaroli}. However, the entropy conditions provide the only non-linear estimates currently available for generic systems of conservation laws \cite{dafermos2005hyperbolic}, and are thus essential. In practice, the entropy condition \eqref{eqn:ent_ineq} is considered with respect to a specific choice of $(\eta(\con),q(\con))$ for a general system \eqref{eqn:cons_law}.

It is meaningful to develop \textit{entropy stable schemes}, i.e., schemes satisfying a discrete version of \eqref{eqn:ent_ineq}, to solve systems of conservation laws. Further, it is essential for such methods to be high-order accurate while being capable of capturing discontinuities without spurious oscillations. 
In finite difference methods, the computational domain is partitioned into cells, on which a discrete version of the conservation law is formulated. Point values of the solution in the cells are evolved in time using time integration techniques, such as strong stability preserving Runge-Kutta methods \cite{gottlieb2001strong}. In \cite{tadmor2003entropy}, a novel approach to constructing entropy stable finite difference schemes was proposed, which comprises two steps: (1) beginning with a second-order \textit{entropy conservative} flux such that entropy is conserved locally and (2) adding an artificial dissipation term to ensure entropy stability.

TeCNO schemes \cite{fjordholm2012arbitrarily} are a popular class of arbitrary high-order entropy stable finite difference schemes which build on the formulation considered in \cite{tadmor2003entropy}. These schemes consist of augmenting high-order entropy conservative fluxes with high-order numerical diffusion, which rely on specialized polynomial reconstruction algorithms. Crucially, these reconstructions must satisfy a \textit{sign property} (see Lemma \ref{lem:es_condition}) at each cell interface so that entropy stability is ensured. 
Although any reconstruction method with the sign property can be used to formulate entropy stable TeCNO schemes, only a handful of such algorithms are currently available \cite{fjordholm2012arbitrarily,cheng2016third,cheng2019fourth,fjordholm2016sign,ray2018third}. 
The essentially non-oscillatory (ENO) reconstruction method, which adaptively chooses the smoothest stencil for reconstruction, satisfies the sign property as demonstrated in \cite{fjordholm2013eno}. To overcome some of the computational challenges encountered when using ENO, third-order weighted ENO (WENO) reconstructions were proposed in \cite{fjordholm2016sign,ray2018third} which guarantee the sign property. However, these WENO schemes (termed as SP-WENO), while ensuring entropy stability in the TeCNO framework, are susceptible to spurious oscillations near discontinuities. We will highlight this issue in greater detail in the present work. We also mention here an alternate strategy \cite{pandey2023sign} based on TeCNO schemes, where the sign property of reconstructions is not required. Instead, a specialized diffusion operator is constructed to ensure entropy stability, which can be used with any high-order reconstruction algorithm. 

The last few years have witnessed a surge in the use of deep learning-based strategies to solve problems in scientific computing. Key examples includes building surrogates to solve PDEs \cite{raissi2019,pinns_review,deeponet,li2020fourier,varmion}, learning parameter-to-observable maps \cite{lye2020,lye2021iterative,huang2024operator}, solving Bayesian inference problems \cite{goh2022,patel2022solution,pigans,dasgupta2023conditional,raycgan23}, and learning closure models \cite{singh2016,beck2019,maulik2020}. Among these is the class of deep learning (DL) approaches which aim to enhance existing numerical solvers, instead of replacing them. The philosophy here is to first identify computational bottlenecks in the solver, and then use domain knowledge to train specialized neural networks to replace the bottleneck, keeping the rest of the solver intact. Thus, this synergistic approach leverages the ``best of both worlds", leading to a better numerical solver aided by DL. For instance, in \cite{ray2018artificial,ray2019detecting}, deep-learned (universal) troubled-cell indicators were designed for discontinuous Galerkin (DG) schemes solving conservation laws, to identify (classify) elements containing discontinuities. Similar DL-based shock capturing strategies have also been explored in \cite{discacciati2020controlling,beck2020,zeifang2021,schwander2021,bruno2022,veiga2023,sun2020convolution,yu2022multi}.

DL techniques have also been used to learn ENO/WENO-type reconstructions. In \cite{DeRyck2022}, it was shown that ENO (and some of its variants) are equivalent to deep ReLU neural networks.
A six-point ENO-type reconstruction was proposed in \cite{li2022six} where the stencil selection is performed using a neural network. However, when used to solve conservation laws, some numerical experiments were accompanied by a degeneracy in the expected order of accuracy. In \cite{stevens2020}, a neural network was trained to develop an optimal variant of the classical WENO-JS \cite{jiangshu96}. 
Despite leading to sharper shock profiles in numerical solutions, 
the expected accuracy was not always observed with smooth solution. A DL-based fifth-order WENO called WENO-DS was proposed in \cite{kossaczka2021enhanced}
with improved shock-capturing capabilities while retaining the formal order of accuracy. However WENO-DS is not model agnostic, and needs to be retrained if the conservation law is changed. An extension to the 2D Euler equations was proposed in \cite{kossaczka2024deep}.

In the present work, we are interested in using DL to recover reconstruction algorithms constrained to satisfy useful physical properties. In particular, we construct a variant of SP-WENO, which we call \net, where a neural network predicts the weights associated with the reconstruction, while ensuring that the sign property (along with other crucial constraints) are satisfied. These constraints are \textit{strongly imposed} which results in a convex polygonal region for the WENO weight selection. Then, a neural network is trained on suitably generated training samples to learn the weight selection algorithm of the \net for appropriate behavior near discontinuities and smooth regions. The proposed \net is constructed to be third-order accurate, which is also demonstrated numerically. When used in the TeCNO framework, \net overcomes the computational issues faced by ENO and the existing SP-WENO strategies. We re-iterate that the \net only replaces the reconstruction needed in the high-order diffusion term, keeping the rest of the TeCNO framework intact. Thus the proposed method can be seen as a \textit{deep learning-based enhancement} of an existing numerical algorithm. We remark here that a single \net network is trained (offline) to be used with any conservation laws, i.e., the proposed algorithm is agnostic to the specific PDE model being solved.

The rest of the paper is organized as follows. In Section \ref{sec:fin_dif_schemes}, we describe the framework of high-order entropy stable finite difference schemes. In Section \ref{sec:sign_pre_weno}, we introduce the formulation and properties of existing sign-preserving WENO reconstructions. The construction of our DL-based method is explained in Section \ref{sec:SP-WENO-DL}. The results of various numerical tests including one-dimensional and two-dimensional scalar and systems of conservation laws are presented in Section \ref{sec:numerical}. Final conclusions and future directions are discussed in Section \ref{sec:conclusion}.

\section{Finite Difference Schemes/Entropy Stable Schemes}\label{sec:fin_dif_schemes}

We consider a one-dimensional finite difference formulation for ease of discussion, which can easily be extended to higher dimensions. We partition the spatial domain $\Omega = [a,b]$ into $N$ disjoint cells $I_i=[x_{i-\frac{1}{2}},x_{i+\frac{1}{2}})$ of uniform length $h=(b-a)/N$ with cell center $x_i$ where
\begin{equation*}
x_{i+\frac{1}{2}} = a + ih, \quad \forall \ 0 \leq i \leq N \ \ \  \text{and} \ \ \  x_i = \frac{x_{i-\frac{1}{2}}+x_{i+\frac{1}{2}}}{2} \quad \forall \  1 \leq i \leq N.
\end{equation*} Keeping time continuous, the semi-discrete finite difference scheme for \eqref{eqn:cons_law} is expressed as
\begin{equation}\label{semi_discrete}
\frac{d\con_i(t)}{dt} + \frac{1}{h}(\f_{i+\frac{1}{2}}-\f_{i-\frac{1}{2}}) = 0.
\end{equation}
Here, $\con_i(t)$ approximates the point values of the solution to \eqref{eqn:cons_law} at cell center $x_i$ and time $t$, while $\f_{i+\frac{1}{2}}$ is a consistent, conservative numerical approximation of the flux $\f$ at the cell interface $x_{i+\frac{1}{2}}$. We seek entropy stable schemes to approximate \eqref{eqn:cons_law} which satisfy a discrete version of the entropy condition \eqref{eqn:ent_ineq}. As described in \cite{tadmor2003entropy}, we begin by constructing an entropy conservative scheme which satisfies the discrete entropy equality
\begin{equation}\label{disc_ent_eq}
\frac{d\eta(\con_i)}{dt} + \frac{1}{h}\left(\Tilde{q}_\iph-\Tilde{q}_\imh\right) = 0,
\end{equation}
where $\Tilde{q}_\iph$ is a numerical entropy flux consistent with entropy flux $q$ in \eqref{eqn:ent_ineq}.

We denote the undivided jump and average across the interface $x_{i+\frac{1}{2}}$ by $\Delta(\cdot)_{i+\frac{1}{2}} = (\cdot)_{i+1} - (\cdot)_i$ and $\overline{(\cdot)}_{i+\frac{1}{2}} =\big((\cdot)_{i+1} + (\cdot)_i\big)/2$, respectively. 

Additionally, we introduce the \textit{entropy potential} $\Psi(\con) := \ent(\con)^\top\f(\con) - q(\con)$, where $\ent(\con) = \nabla_\con \eta(\con)$ is the vector of \textit{entropy variables}. A sufficient condition \cite{tadmor2003entropy} for the two-point flux $\Tilde{\f}_{i+\frac{1}{2}}=\Tilde{\f}_{i+\frac{1}{2}}(\con_i,\con_{i+1})$ to be entropy conservative is given by 
\begin{equation}\label{ent_cons_condition}
\big(\Delta \ent_{i+\frac{1}{2}}\big)^\top\Tilde{\f}_{i+\frac{1}{2}} = \Delta \Psi_{i+\frac{1}{2}}.
\end{equation}
The expression \eqref{ent_cons_condition} provides a recipe to construct second-order entropy conservative fluxes, and leads to a unique numerical flux for scalar conservation laws (given $\eta)$. For systems of conservation laws, it is possible to carefully construct numerical fluxes that satisfy \eqref{ent_cons_condition} with additional desirable properties \cite{ismail2009,chandrashekar2013kinetic,winter2016,gouasmi2019}. Arbitrary high-order entropy conservative fluxes $\Tilde{\f}^p$ (of order $p$) can be constructed using linear combinations of second-order two-point entropy conservative fluxes \cite{lefloch2002fully}. For example, the fourth-order $(p=4)$ entropy conservative flux is given by
\begin{equation}\label{4th_order_flux}
\Tilde{\f}_{i+\frac{1}{2}}^{4} = \frac{4}{3}\Tilde{\f}(\con_{i},\con_{i+1}) - \frac{1}{6}\left(\Tilde{\f}(\con_{i-1},\con_{i+1}) + \Tilde{\f}(\con_{i},\con_{i+2})\right).
\end{equation}

Entropy is conserved for smooth solutions and thus using an entropy conservative scheme is meaningful. However, entropy is dissipated near discontinuities in accordance to \eqref{eqn:ent_ineq}. Hence, an entropy variable-based numerical dissipation term is added to the entropy conservative numerical flux 
\begin{equation}\label{ent_stable_flux}
\f_{i+\frac{1}{2}} = \Tilde{\f}^{p}_{i+\frac{1}{2}} - \frac{1}{2}\mathbf{D}_{i+\frac{1}{2}}\Delta\ent_{i+\frac{1}{2}},
\end{equation}
where $\mathbf{D}_{i+\frac{1}{2}}\succeq0$ (a positive semi-definite matrix) is evaluated at some suitable averaged states. The numerical flux \eqref{ent_stable_flux} leads to the satisfaction of the following discrete entropy inequality \cite{fjordholm2012arbitrarily}
\begin{equation}\label{eqn:disc_ent_stab}
\frac{d\eta(\con_i)}{dt} + \frac{1}{h}\left(q_\iph-q_\imh\right) \leq 0,
\end{equation}
where $q_\iph$ is a consistent numerical entropy flux.
While any positive semi-definite matrix ensures entropy stability in the sense of \eqref{eqn:disc_ent_stab}, we choose the form $\mathbf{D}_{i+\frac{1}{2}} = \mathbf{R}_{i+\frac{1}{2}}\Lambda_{i+\frac{1}{2}}\mathbf{R}^{\top}_{i+\frac{1}{2}}$. Here, $\mathbf{R}$ is a matrix consisting of the right eigenvectors of the flux Jacobian and $\mathbf{\Lambda}$ is a nonnegative diagonal matrix that depends on the eigenvalues of the flux Jacobian. Specifically, we choose the \textit{Roe-type} diffusion matrix with $\mathbf{\Lambda}=\diag{(|\lambda^1|,...,|\lambda^d|)}$. See \cite{tadmor2003entropy} for other choices for the diffusion matrix.

Note that the term $\Delta \ent_{i+\frac{1}{2}}$ in \eqref{ent_stable_flux} is $\mathcal{O}(h)$. Therefore, the numerical scheme that results from using \eqref{ent_stable_flux} is only first-order accurate regardless of the accuracy of the entropy conservative flux. Since the accuracy of the scheme is limited by the diffusion term, we construct a higher-order diffusion term by suitably reconstructing the jump in entropy variables at the cell interfaces.

We consider the interface at $x_{i+\frac{1}{2}}$ between the cells $I_i$ and $I_{i+1}$ and reconstruct from the left and right of this interface. We define the (locally) scaled entropy variables $\z=\mathbf{R}^{\top}_\iph\ent$ corresponding to this interface. The flux \eqref{ent_stable_flux} can thus be expressed as
\begin{equation}\label{ent_stable_flux_scaled}
\f_{i+\frac{1}{2}} = \Tilde{\f}^{p}_{i+\frac{1}{2}} - \frac{1}{2}\mathbf{R}_{i+\frac{1}{2}}\Lambda_{i+\frac{1}{2}}\Delta\z_{i+\frac{1}{2}}.
\end{equation}
Let $\z_i(x)$ and $\z_{i+1}(x)$ be polynomial reconstructions of the scaled entropy variables in $I_i$ and $I_{i+1}$, respectively. We denote the reconstructed values and jump at the cell interface by
\begin{equation*}
\z_{i+\frac{1}{2}}^- = \z_i(x_{i+\frac{1}{2}}), \hspace{2mm} \z_{i+\frac{1}{2}}^+ = \z_{i+1}(x_{i+\frac{1}{2}}), \hspace{2mm} \llbracket\z\rrbracket_{i+\frac{1}{2}} = \z_{i+\frac{1}{2}}^+ - \z_{i+\frac{1}{2}}^-.
\end{equation*}
Replacing the original jump $\Delta\z_{i+\frac{1}{2}}$ in \eqref{ent_stable_flux_scaled} by the reconstructed jump $\llbracket\z\rrbracket_{i+\frac{1}{2}}$  will lead to a higher-order accurate scheme. However, this scheme is not guaranteed to be entropy stable. The following lemma provides a sufficient condition on the reconstruction algorithm that ensures entropy stability.

\begin{lem}\label{lem:es_condition}
\textnormal{(\cite{fjordholm2012arbitrarily})} For each interface $x_{i+\frac{1}{2}}$, if the reconstruction satisfies the sign property
\begin{equation}\label{sign_prop}
\sign{(\llbracket\z\rrbracket_{i+\frac{1}{2}})} = \sign{(\Delta\z_{i+\frac{1}{2}})},
\end{equation}
then the scheme with the following numerical flux is entropy stable
\begin{equation}\label{high_order_ent_stable_flux}
\f_{i+\frac{1}{2}} = \Tilde{\f}^{p}_{i+\frac{1}{2}} - \frac{1}{2}\mathbf{R}_{i+\frac{1}{2}}\Lambda_{i+\frac{1}{2}}\llbracket\z\rrbracket_{i+\frac{1}{2}}.
\end{equation}
\end{lem}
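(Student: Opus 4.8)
The plan is to derive a discrete cell entropy balance by contracting the semi-discrete scheme \eqref{semi_discrete} with the entropy variables, and then to show that the only residual term that does not appear as a numerical entropy flux difference is non-positive. First I would multiply \eqref{semi_discrete} by $\ent_i^\top = \nabla_\con\eta(\con_i)^\top$ and use the chain rule $\frac{d\eta(\con_i)}{dt} = \ent_i^\top \frac{d\con_i}{dt}$ to obtain $\frac{d\eta(\con_i)}{dt} + \frac{1}{h}\ent_i^\top(\f_\iph - \f_\imh) = 0$. I would then split the flux \eqref{high_order_ent_stable_flux} as $\f_\iph = \Tilde{\f}^p_\iph + \f^{\mathrm{diff}}_\iph$, with $\f^{\mathrm{diff}}_\iph = -\frac{1}{2}\mathbf{R}_\iph\Lambda_\iph\llbracket\z\rrbracket_\iph$, and treat the two pieces separately.

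For the entropy conservative piece I would invoke the fact (already established for the high-order fluxes $\Tilde{\f}^p$ built from \eqref{ent_cons_condition}, cf. \eqref{4th_order_flux}) that $\ent_i^\top(\Tilde{\f}^p_\iph - \Tilde{\f}^p_\imh) = \Tilde{q}_\iph - \Tilde{q}_\imh$ for a consistent numerical entropy flux $\Tilde{q}$, i.e. this piece reproduces the entropy equality \eqref{disc_ent_eq}. For the diffusion piece the key manipulation is the elementary identity $\ent_i = \overline{\ent}_\iph - \frac{1}{2}\Delta\ent_\iph = \overline{\ent}_\imh + \frac{1}{2}\Delta\ent_\imh$, which lets me decompose $\ent_i^\top(\f^{\mathrm{diff}}_\iph - \f^{\mathrm{diff}}_\imh)$ into the difference of $\overline{\ent}_\iph^\top \f^{\mathrm{diff}}_\iph$ across the two interfaces plus the two half-jump terms $-\frac{1}{2}(\Delta\ent_\iph)^\top\f^{\mathrm{diff}}_\iph$ and $-\frac{1}{2}(\Delta\ent_\imh)^\top\f^{\mathrm{diff}}_\imh$. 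Absorbing the averaged contributions into the numerical entropy flux by setting $q_\iph := \Tilde{q}_\iph + \overline{\ent}_\iph^\top\f^{\mathrm{diff}}_\iph$ (which is consistent, since $\f^{\mathrm{diff}}_\iph$ vanishes when $\con_i = \con_{i+1}$), and using $\frac{d\eta(\con_i)}{dt} = -\frac{1}{h}\ent_i^\top(\f_\iph - \f_\imh)$ so that the overall sign flips, the balance reduces to $\frac{d\eta(\con_i)}{dt} + \frac{1}{h}(q_\iph - q_\imh) = \frac{1}{2h}\big[(\Delta\ent_\iph)^\top\f^{\mathrm{diff}}_\iph + (\Delta\ent_\imh)^\top\f^{\mathrm{diff}}_\imh\big]$. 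It then suffices to show that each interface production $(\Delta\ent_\iph)^\top\f^{\mathrm{diff}}_\iph$ is non-positive.

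The final step, which I expect to be the crux, is to pass to the scaled variables in which the diffusion diagonalizes and the sign property applies componentwise. Using $\z = \mathbf{R}^\top_\iph\ent$, so that $\Delta\z_\iph = \mathbf{R}^\top_\iph\Delta\ent_\iph$, I would rewrite $(\Delta\ent_\iph)^\top\f^{\mathrm{diff}}_\iph = -\frac{1}{2}(\Delta\z_\iph)^\top\Lambda_\iph\llbracket\z\rrbracket_\iph = -\frac{1}{2}\sum_{k=1}^d |\lambda^k_\iph|\,(\Delta z^k_\iph)(\llbracket z^k\rrbracket_\iph)$, where I used that $\Lambda_\iph = \diag(|\lambda^1|,\dots,|\lambda^d|)$ is diagonal with non-negative entries. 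The sign property \eqref{sign_prop}, imposed componentwise, gives $\sign(\llbracket z^k\rrbracket_\iph) = \sign(\Delta z^k_\iph)$ and hence $(\Delta z^k_\iph)(\llbracket z^k\rrbracket_\iph) \geq 0$ for every $k$; together with $|\lambda^k_\iph| \geq 0$ this makes each summand non-negative and the whole interface production $\leq 0$. Thus both bracketed terms are non-positive, yielding the cell entropy inequality \eqref{eqn:disc_ent_stab}. The main subtlety to handle carefully is precisely this reduction: the sign property is a scalar, component-wise condition, and it is the scaling $\z = \mathbf{R}^\top_\iph\ent$ together with the diagonal, non-negative structure of $\Lambda_\iph$ that decouples the quadratic-type form into independent scalar products to which \eqref{sign_prop} can be applied term by term.
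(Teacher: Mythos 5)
Your proof is correct and follows essentially the same route as the paper's source for this result: the paper does not reprove the lemma but cites \cite{fjordholm2012arbitrarily}, whose argument is exactly your three steps — contract \eqref{semi_discrete} with $\ent_i^\top$, absorb the averaged diffusion contribution into the numerical entropy flux via $q_\iph = \Tilde{q}_\iph + \overline{\ent}_\iph^\top\f^{\mathrm{diff}}_\iph$, and use the scaling $\Delta\z_\iph = \mathbf{R}_\iph^\top\Delta\ent_\iph$ with the diagonal nonnegative $\Lambda_\iph$ to reduce the interface production to $-\frac{1}{2}\sum_k |\lambda^k_\iph|\,\Delta z^k_\iph \llbracket z^k\rrbracket_\iph \le 0$ by the componentwise sign property. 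One minor imprecision worth fixing: consistency of $q_\iph$ needs the reconstructed jump $\llbracket\z\rrbracket_\iph$ to vanish when \emph{all} stencil values coincide (it is a multi-point flux, not a function of $\con_i,\con_{i+1}$ alone), which holds for any consistent reconstruction.
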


High-order entropy stable schemes described in Lemma \ref{lem:es_condition} are called \textit{TeCNO} schemes, which rely on reconstructions satisfying the sign property \eqref{sign_prop}. However, only a handful of reconstructions are known to satisfy the sign property. In \cite{fjordholm2013eno}, it was proven that ENO reconstructions satisfy this critical condition. In \cite{fjordholm2016sign}, third-order WENO schemes (known as SP-WENO) were designed to possess the sign property, which serves as the starting point for the novel reconstruction proposed in the present work. We discuss the SP-WENO formulation in Section \ref{sec:sp_weno}.

\section{Sign-Preserving WENO Reconstructions}\label{sec:sign_pre_weno}

A typical WENO reconstruction \cite{jiangshu96} produces a $(2k-1)$th-order accurate reconstruction by taking a convex combination of the $2k-1$ candidate polynomials considered in the $k$th-order ENO reconstruction. A third-order WENO has the following form for the left and right reconstructions of the variable $z$ at the interface $x_{i+\frac{1}{2}}$:

\begin{equation}\label{eqn:weno_recon}
\begin{aligned}
z_{i+\frac{1}{2}}^- {=}  \frac{w_{0}(z_i {+} z_{i+1})+ w_{1}(3z_i {-}z_{i-1})}{2}, \ 
z_{i+\frac{1}{2}}^+ {=}  \frac{\Tilde{w}_{0}(3z_{i+1} {-} z_{i+2}) + \Tilde{w}_{1}(z_i {+} z_{i+1})}{2},
\end{aligned}
\end{equation}
where the weights $w_0,w_1,\Tilde{w}_0,\Tilde{w}_1$ (at each interface) must be chosen to: (i) ensure third-order accuracy of the reconstructions $z_{i+\frac{1}{2}}^{\pm}$ for smooth solutions and (ii) provide minimal weight to linear polynomials on stencils containing discontinuities. In general, WENO reconstructions do not satisfy the sign property, and their use in the TeCNO framework described by \eqref{high_order_ent_stable_flux} does not yield entropy stable schemes. Thus, a WENO reconstruction must be specifically designed to be sign-preserving, i.e., it must satisfy the sign property \eqref{sign_prop}.

\subsection{SP-WENO \cite{fjordholm2016sign}}\label{sec:sp_weno}

SP-WENO was the first variant of WENO schemes designed to satisfy the sign property. In this framework, the weights in \eqref{eqn:weno_recon} are given as
\begin{equation}\label{eqn:spweno_wts}
   w_0=\frac{3}{4}+2C_1, \quad  \Tilde{w}_0=\frac{1}{4}-2C_2, \quad w_1=1-w_0, \quad \Tilde{w}_1=1-\Tilde{w}_0.
\end{equation}
We define the jump ratios at the interface $x_{i+\frac{1}{2}}$ as $\theta_i^-:=\Delta z_{i+\frac{1}{2}}/\Delta z_{i-\frac{1}{2}}$, $\theta_i^+ := 1/\theta_i^-$ and the additional terms
\begin{equation}\label{eqn:psi_pm}
\psi_{i+\frac{1}{2}}^+ := \frac{1-\theta_{i+1}^-}{1-\theta_{i}^+}, \quad \psi_{i+\frac{1}{2}}^- = \frac{1}{\psi_{i+\frac{1}{2}}^+}. 
\end{equation}
Then the perturbations $C_1$, $C_2$ in SP-WENO are determined as
\begin{flalign*}
C_1(\theta_i^+,\theta_{i+1}^-) = \begin{cases}
\frac{1}{8}\left(\frac{\kappa^+}{(\kappa^+)^2+(\kappa^-)^2}\right) &\text{ if } \theta_i^+\neq1, \psi^+<0, \psi^+\neq-1 \\
0 &\text{ if } \theta_i^+\neq1, \psi^+ = -1 \\
-\frac{3}{8} &\text{ if } \theta_i^+=1 \text{ or } \psi^+ \geq 0, |\theta_i^+|\leq1 \\
\frac{1}{8} &\text{ if } \psi^+ \geq 0, |\theta_i^+| > 1
\end{cases},
\end{flalign*}
and $C_2(\theta_i^+,\theta_{i+1}^-) = C_1(\theta_{i+1}^-,\theta_{i}^+)$, where
\begin{flalign*}
&\kappa^+(\theta_i^+,\theta_{i+1}^-) := \begin{cases}
\frac{1}{1+\psi^+} &\text{ if } \theta_i^+\neq1, \psi^+\neq-1\\
1 &\text{ otherwise}
\end{cases}, \hspace{5mm} \kappa^-(\theta_i^+,\theta_{i+1}^-) := \kappa^+(\theta_{i+1}^-,\theta_{i}^+).
\end{flalign*}

We summarize the key properties that SP-WENO satisfies:
\begin{enumerate}
    \item \textbf{Consistency}: The weights obey $0\leq w_0,w_1,\Tilde{w}_0,\Tilde{w}_1\leq1$, or equivalently $-3/8\leq C_1,C_2\leq1/8$.
    \item \textbf{Sign Property}: The reconstructed values \eqref{eqn:weno_recon} satisfy the condition \eqref{sign_prop}. Further, we can show that the reconstructed jump can be re-written as
    \begin{equation}\label{recon_jump}
    \llbracket z\rrbracket_{i+\frac{1}{2}} = \frac{1}{2}[\Tilde{w}_0(1-\theta_{i+1}^-)+w_1(1-\theta_i^+)]\Delta z_{i+\frac{1}{2}},
    \end{equation}
    which leads to the following constraint (equivalent to the sign property)
    \begin{equation}\label{eqn:sign_prop_constraint}
    [\Tilde{w}_0(1-\theta_{i+1}^-)+w_1(1-\theta_i^+)]\geq0.
    \end{equation}
\item \textbf{Negation Symmetry}: The weights remain unchanged under the transformation $z\mapsto -z$. 
A sufficient condition to ensure this property is to choose $C_1,C_2$ to be functions of features invariant to this negation transformation. For example,
\begin{equation}\label{neg_sym}
C_k = C_k(\theta_i^+,\theta_{i+1}^-,|\Delta z_{i+\frac{1}{2}}|, (|z_i| + |z_{i+1}|)) \hspace{3mm} k = 1,2.
\end{equation}
\item \textbf{Mirror Property}: Mirroring the solution about interface $x_{i+\frac{1}{2}}$ should also mirror the weights about the interface. Assuming negation symmetry holds and $C_1,C_2$ are of the form \eqref{neg_sym}, the mirror property is ensured if
\begin{equation}\label{mirror_prop}
C_1(a,b,c,d)=C_2(b,a,c,d) \hspace{2mm} \forall a,b,c,d\in\mathbb{R}.
\end{equation}
\item \textbf{Inner Jump Condition}: This condition ensures that the reconstruction is locally monotonicity-preserving. For each cell $i$:
\begin{equation}\label{inner_jump_cond}
\sign{\left(z_{i+\frac{1}{2}}^--z_{i-\frac{1}{2}}^+\right)} = \sign{\left(\Delta z_{i+\frac{1}{2}}\right)} = \sign{\left(\Delta z_{i-\frac{1}{2}}\right)},
\end{equation}
whenever the second equality holds. With consistent weights, this property is automatically satisfied.
\item \textbf{Bound on jumps:} The reconstructed jump has the bound
\begin{equation}\label{jump_bound}
|\llbracket z\rrbracket_{i+\frac{1}{2}}| \leq 2|\Delta z_{i+\frac{1}{2}}|.    
\end{equation}
\end{enumerate}

\subsection{SP-WENOc \cite{ray2018third}}\label{sec:sp_wenoc}
While SP-WENO leads to an entropy stable scheme, numerical results presented in \cite{ray2018third} (also see Section \ref{sec:numerical}) demonstrate its poor performance near discontinuities in the TeCNO framework. As discussed in \cite{ray2018third}, this issue can be attributed to the fact that the reconstructed jump $\llbracket z\rrbracket_{i+\frac{1}{2}}$ is zero in most cases, resulting in zero numerical diffusion (see \eqref{high_order_ent_stable_flux}) in these regions. While the absence of numerical diffusion may be acceptable in smooth regions, this can cause Gibbs oscillations near discontinuities. The most important problematic cases lie in the so-called \textit{C-region} \cite{ray2018third}, where the jump ratios are either $\theta_{i}^+<1$, $\theta_{i+1}^->1$ or $\theta_{i}^+>1$, $\theta_{i+1}^-<1$.

SP-WENOc seeks to remedy this by introducing a small perturbation $\mathcal{G}$ to ensure that the reconstructed jump is nonzero in the C-region. The modified jump is taken to be of the form
\begin{equation*}
\llbracket z\rrbracket_{i+\frac{1}{2}} = \frac{1}{2}\left[\Tilde{w}_0(1-\theta_{i+1}^-)+w_1(1-\theta_i^+) + \mathcal{G}\right]\Delta z_{i+\frac{1}{2}}.
\end{equation*}
where $\mathcal{G}$ is chosen as
\begin{equation}
\mathcal{G}=\left(\min{\left(\frac{\left|\Delta z_{i+\frac{1}{2}}\right|}{0.5(|z_i|+|z_{i+1}|)},\left|\Delta z_{i+\frac{1}{2}}\right|\right)}\right)^3.
\end{equation}
The perturbed jump can be realized by using the following modifications to $C_1,C_2$ 
\begin{equation}
\overline{C}_1=C_1-\frac{1}{4}\frac{\mathcal{G}}{(1-\theta_i^+)}, \hspace{3mm} \overline{C}_2=C_2-\frac{1}{4}\frac{\mathcal{G}}{(1-\theta_{i+1}^-)}.
\end{equation}

SP-WENOc retains all of the properties enumerated in Section \ref{sec:sp_weno} with the exception of the bound on the reconstructed jumps. SP-WENOc certainly injects more diffusion near discontinuities and mitigates the oscillatory behavior to some extent. However, the overshoots present in solutions, while reduced, are still significant in a multitude of test cases when compared to the performance of ENO reconstructions in the TeCNO framework (see Section \ref{sec:numerical}).

\subsection{Feasible Region for SP-WENO}\label{sec:feasible_region}

The SP-WENO and SP-WENOc formulations are just two possible weight selection strategies satisfying the constraints that guarantee consistency, sign-preservation, and third-order accuracy. Note that none of these constraints describe the behavior of solutions near shocks, thus SP-WENO and SP-WENOc are not necessarily constructed with precise shock-capturing in mind. That is to say, there are potentially other \textit{SP-WENO variants} that perform better near discontinuities while satisfying the constraints. 

As noted in \cite{fjordholm2016sign}, the reconstruction problem at an interface can be broken down in several cases depending on the jump ratios $\theta_i^+,\theta_{i+1}^-$. Each case has its own constraints on the perturbations $(C_1,C_2)$ such that consistency, sign-preservation, and third-order accuracy are guaranteed. We define the notion of a \textit{feasible region} based on the values of $\theta_i^+,\theta_{i+1}^-$.
\begin{defn}[Feasible Region]\label{def:feasible}
  Let $\Omega_\Theta \subset \Ro^2$ such that $(\theta_i^+,\theta_{i+1}^-) \in \Omega_\Theta$. Corresponding to $\Omega_\Theta$, we define a feasible region $\Omega_C \subset \Ro^2$ such that any $(C_1,C_2)\in\Omega_C$ satisfies
  \begin{enumerate}
      \item Consistency: $-\frac{3}{8}\leq C_1,C_2\leq\frac{1}{8}$.
      \item Sign Property: The choice of $(C_1,C_2)$ leads to weights that satisfy the constraint \eqref{eqn:sign_prop_constraint}.
      \item Accuracy: Additional order constraints (if necessary) on $(C_1,C_2)$ to ensure that third-order accuracy is achieved near smooth regions.
  \end{enumerate}
\end{defn}

Thus, for any given values of $\theta_i^+,\theta_{i+1}^-$, a feasible region is defined. See Section \ref{sec:theta_cases} in the Supplementary Material (SM) for further discussion on the cases and their associated feasible regions.

\section{\net}\label{sec:SP-WENO-DL}

The typical goal of deep learning is to approximate an unknown function
\begin{equation}
\N: \mathbb{R}^{N_I} \mapsto \mathbb{R}^{N_O}, \text{ given the dataset }
\mathbb{T} = \{(\bm{X}^{(k)},\bm{Y}^{(k)})\}_{k=1}^K.
\end{equation}
To this end, an artificial neural network $\N_{\bPhi}: \mathbb{R}^{N_I} \mapsto \mathbb{R}^{N_O}$ with trainable parameters $\boldsymbol{\Phi}$ (known as weights and biases) is trained on a suitable dataset such that an objective/loss function $\Pi(\boldsymbol{\Phi})$ is optimized. 

In the present work, the unknown function $\N$ serves to select the perturbations $C_1,C_2$ based on local features of the solution. As discussed previously, SP-WENO and SP-WENOc stipulate possible functions that $\N$ can be, though they may not be optimal. Our idea is to learn a weight selection strategy by approximating $\N$ with $\N_{\bPhi}$ so that shock-capturing behavior is improved. Using the training dataset $\mathbb{T}$ consisting of local features and the actual interface values, we hope to train a neural network such that a suitable variant of the SP-WENO reconstruction is learned.

For the $k$-th sample in $\mathbb{T}$, $\bm{X}^{(k)} \in \Ro^4$ consists of the four cell center values of some function $z$ in the local stencil centered about (some) $x_{i+\frac{1}{2}}$: $[z_{i-1},z_i,z_{i+1},z_{i+2}]$, while $\bm{Y}^{(k)} \in \Ro^2$ is the true cell interface values $z^{\pm}(x_{i+\frac{1}{2}})$. Note that the interface values from the left (-) and right (+) will be identical for a smooth function, but will be different if there is a discontinuity at the interface.

We want to ensure that all the inputs to the neural network are of the same order of magnitude (essential for generalization). Thus, we first scale the cell center values as
\begin{equation}\label{eqn:scaling}
z_{i+j}^* = \frac{z_{i+j}}{\max{(1,(\max\limits_{-1\leq r\leq 2}{(|z_{i+r}|)}))}}, \hspace{3mm} j=-1,0,1,2.
\end{equation}
A similar input scaling was also considered in \cite{ray2018artificial,ray2019detecting}. Using the scaled cell center values $[z_{i-1}^*,z_i^*,z_{i+1}^*,z_{i+2}^*]$, we compute $\theta_{i+1}^-,\theta_i^+,\left|\Delta z_{i-\frac{1}{2}}^*\right|, \left|\Delta z_{i+\frac{1}{2}}^*\right|, \left|\Delta z_{i+\frac{3}{2}}^*\right|$. We note that the jump ratios $\theta_{i+1}^-,\theta_i^+$ are invariant to the scaling \eqref{eqn:scaling}. Thus we define the map $\Ss: \Ro^4 \rightarrow \Ro^5$ which transforms $\bm{X}^{(k)}$ in the training set to the jump ratios and scaled absolute jumps mentioned above, i.e.,
\begin{equation}\label{eqn:S}
\Ss([z_{i-1},z_i,z_{i+1},z_{i+2}]) = \left[\theta_{i+1}^-,\theta_i^+,\left|\Delta z_{i-\frac{1}{2}}^*\right|, \left|\Delta z_{i+\frac{1}{2}}^*\right|, \left|\Delta z_{i+\frac{3}{2}}^*\right|\right].
\end{equation}
Since $\theta_{i+1}^-,\theta_i^+$ can assume values with significantly large magnitudes, we transform $\theta_{i+1}^-,\theta_i^+$ using the hyperbolic tangent in order to bound the input to the network. The input vector to the neural network is $\left[\tanh{(\theta_{i+1}^-)},\tanh{(\theta_i^+)},\left|\Delta z_{i-\frac{1}{2}}^*\right|, \left|\Delta z_{i+\frac{1}{2}}^*\right|, \left|\Delta z_{i+\frac{3}{2}}^*\right|\right] \in \Ro^5$.
We define the map $\Ff: \Ro^4 \rightarrow \Ro^5$ which transforms $\bm{X}^{(k)}$ in the training set to the input of the multi-layer perceptron (MLP), i.e.,
\begin{equation}
\Ff([z_{i-1},z_i,z_{i+1},z_{i+2}]) = \left[\tanh{(\theta_{i+1}^-)},\tanh{(\theta_i^+)},\left|\Delta z_{i-\frac{1}{2}}^*\right|, \left|\Delta z_{i+\frac{1}{2}}^*\right|, \left|\Delta z_{i+\frac{3}{2}}^*\right|\right]. 
\end{equation}

The output of the network $\N_\bPhi$ is the vector $\bm{\alpha} \in \Ro^5$ whose components satisfy
\begin{equation}
    \bm{\alpha}^{(k)} = \N_\bPhi ( \Ff(\bm{X}^{(k)})) \quad \text{with } \sum_{s=1}^5 \alpha^{(k)}_s = 1, \quad 0 \leq \alpha^{(k)}_s \leq 1.
\end{equation}
Let us define a function $\V:\Ro^5 \rightarrow \Ro^{2 \times 5}$ that maps $\Ss(\bm{X}^{(k)})$ to 5 vertices in $\Ro^2$, i.e.,
\begin{equation}\label{eqn:vertex_map}
    \V(\Ss(\bm{X}^{(k)})) = \bm{\nu}^{(k)} := \left[\bm{\nu}^{(k)}_1, \bm{\nu}^{(k)}_2,\bm{\nu}^{(k)}_3, \bm{\nu}^{(k)}_4, \bm{\nu}^{(k)}_5\right] \qquad \bm{\nu}^{(k)}_s \in \Ro^2 \quad 1\leq s\leq5.
\end{equation}
In \eqref{eqn:vertex_map}, the $\bm{\nu}^{(k)}_s$ corresponds to the vertices of a convex polygon (defining the feasible region), thus ensuring that any convex combination of these vertices will result in a vector in $\Ro^2$ that lies in the (closure) of this convex polygon. These convex regions are formed by at most five vertices. In situations with less than five vertices, some of the $\bm{\nu}^{(k)}_s$ in \eqref{eqn:vertex_map} are replaced by a repeated interior node of the convex region, typically the centroid. Further details about $\V$ can be found in Section \ref{sec:vertex_alg}. The output of $\N_\bPhi$ is combined with these vertices to obtain the \net weight perturbations $C_1, C_2$, given by
\begin{equation}\label{eqn:C1_C2_calculation}
\begin{bmatrix}C_1 \\ C_2 \end{bmatrix} = \V(\Ss(\bm{X}^{(k)})) \N_\bPhi ( \Ff(\bm{X}^{(k)})) = \bm{\nu}^{(k)} \bm{\alpha}^{(k)},
\end{equation}
where it is understood that $C_1, C_2$ will also be indexed by the sample index $k$.

Using \eqref{eqn:weno_recon} and \eqref{eqn:spweno_wts}, we define the reconstruction function $\mathcal{P}: \Ro^6 \rightarrow \Ro^2$, that takes as input the cell center values, $[z_{i-1},z_i,z_{i+1},z_{i+2}]$ and the WENO weight perturbations $C_1,C_2$ to give the reconstructed values $z^{\pm}_\iph$ at the interface $x_\iph$
\begin{equation}\label{eqn:net_recon}
    \widehat{\boldsymbol{Y}}^{(k)} : = [z^{-}_\iph, z^{+}_\iph] = \mathcal{P}(\bm{X}^{(k)}, [C_1, C_2]).
\end{equation}

We finally define the objective function in terms of the trainable parameters $\bPhi$ of the network
using a mean squared error (MSE) loss
\begin{align} \label{eqn:loss_func}
\Pi(\boldsymbol{\Phi}) &= \frac{1}{K}\sum_{k=1}^K \Pi_k(\bPhi), \quad \Pi_k(\bPhi) = \left\|\widehat{\boldsymbol{Y}}^{(k)} - \boldsymbol{Y}^{(k)} \right\|_2
\end{align}
where the $\widehat{\boldsymbol{Y}}^{(k)}$ encapsulate the dependence on $\boldsymbol{\Phi}$ and $\|.\|_2$ denotes the Euclidean 2-norm. Thus, the network is trained by solving the minimization problem
\[
\bPhi^\dagger  = \argmin{\bPhi} \Pi(\boldsymbol{\Phi}).
\]
Figure \ref{fig:SP-WENO-DL} summarizes how \net computes the reconstructed interface values. Note that the only learnable component of the algorithm is $\mathcal{N}_\bPhi$.

\begin{remark}
We have tried the training process with other loss functions, including the mean absolute error (MAE) loss, but ultimately using the loss function \eqref{eqn:loss_func} provides the best performance in our experiments. The choice of the loss function is a hyperparameter, so we do not claim that one loss function is superior to another. Moreover, we do not claim that we have recovered the best \net network as it might be possible to recover an even better network by further tweaking the hyperparameters.
\end{remark}

\begin{remark}
If $\Delta z_{i+\frac{1}{2}}=0$, $\theta_{i+1}^-$ and $\theta_i^+$ are undefined. In this case, we simply set $z_\iph^- = z_i, z_\iph^+ = z_{i+1}$ leading to the reconstruction jump $\llbracket z\rrbracket_{i+\frac{1}{2}}=0$. Otherwise if $\Delta z_{i+\frac{1}{2}}\neq0$, we evaluate the reconstructions using \net as described above. This is also the strategy followed for SP-WENO and SP-WENOc.
\end{remark}

\begin{figure}[!htbp]
    \centering
    \includegraphics[width=5in]{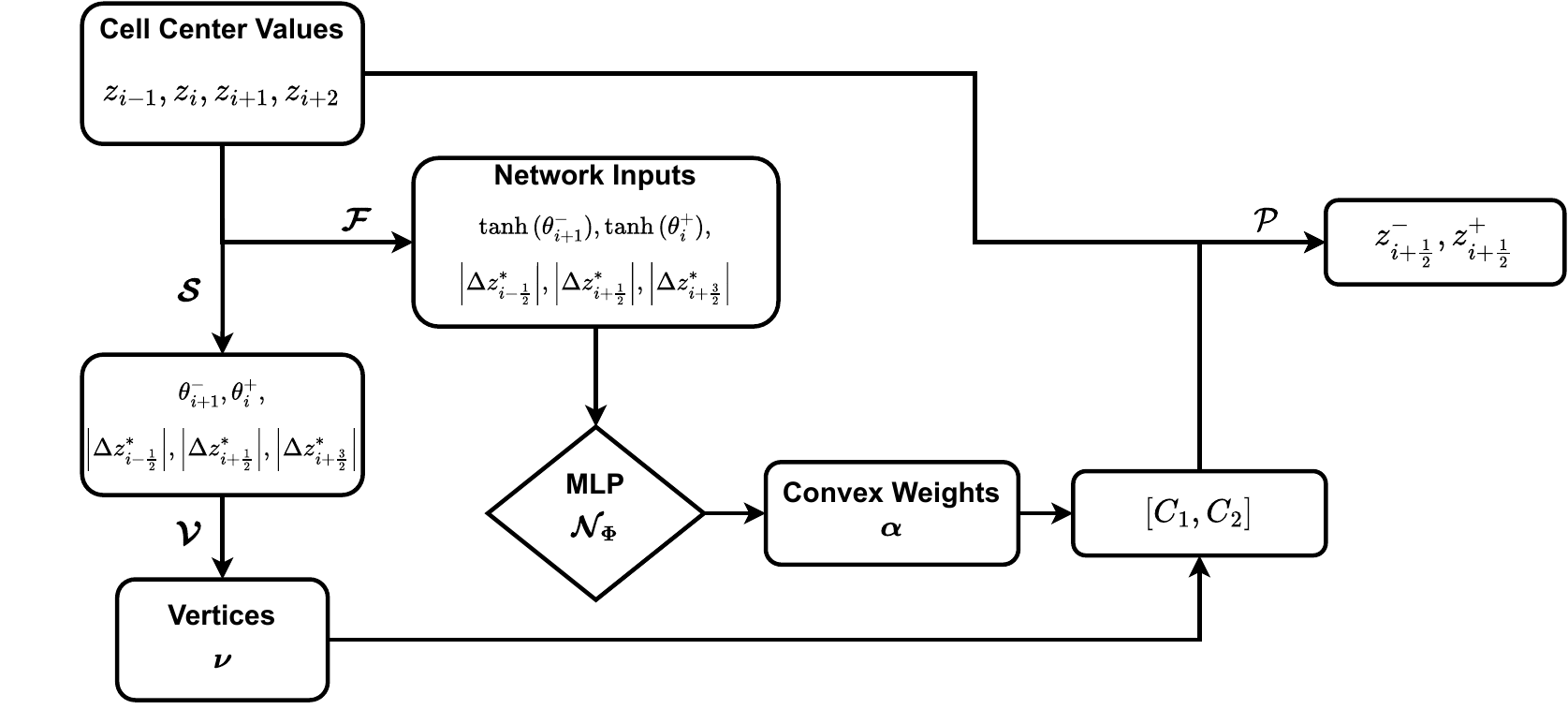}
    \caption{Schematic for the \net reconstruction algorithm.}
    \label{fig:SP-WENO-DL}
\end{figure}

\subsection{Vertex Selection Algorithm}\label{sec:vertex_alg}
In this section, we provide a brief overview of the vertex selection algorithm that defines the map $\V$ in \eqref{eqn:vertex_map}. For full details, refer to Section \ref{sec:SM_vertex} in the SM. Based on the values of $\theta_{i}^+,\theta_{i+1}^-$, we consider 6 disjoint subdomains, each denoted as $\Omega_\Theta$, which partition $\Ro^2$. For each $\Omega_\Theta$, we construct a feasible region $\Omega_C$ (see Definition \ref{def:feasible}). Recall that for any choice of $(C_1,C_2)$ in $\Omega_C$ leads to consistent WENO weights, a guarantee of the sign property and third-order accuracy in smooth regions. Further, it is possible to choose $\Omega_C$ (for each case) to form convex polygon in $\mathbb{R}^2$ with at most 5 vertices. In fact, given any set of cell center values $[z_{i-1},z_i,z_{i+1},z_{i+2}]$, we provide an explicit algorithm to obtain the 5 vertices (taking into account repeated interior nodes when needed) of the convex $\Omega_C$. The pseudocode for the algorithm can be found in Algorithm \ref{alg:vertex_selection} in the SM. The objective with the neural network approach is to learn how to explore these $\Omega_C$ to better select $(C_1,C_2)$. In comparison, SP-WENO and SP-WENOc have handcrafted weight selection strategies that suffer from significant overshoots near discontinuities in the TeCNO framework. \net aspires to mitigate this behavior through a data-driven search in the feasible regions.

\subsection{Data Selection}\label{sec:data_selection}
To generate training data for the neural network, we sample stencils consisting of four cells from parameterized smooth and discontinuous functions of the form listed in Table \ref{tab:sampled_functions} sampled on meshes with mesh size $h=1/40, 1/100$, or $1/200$. The dataset has a 50/50 balanced split of smooth and discontinuous data. The section of the dataset consisting of smooth data consists of an equal representation of the three smooth types listed in Table \ref{tab:sampled_functions}. For discontinuous data, the jump will either be between the first and second cells, the second and third cells, or the third and fourth cells. In any case, using the four cell center values, we compute the jump ratios $\theta_{i+1}^-$ and $\theta_i^+$ and the absolute jumps across the three cell interfaces in the stencil, via $\Ss$ defined in \eqref{eqn:S}. Further, we compute a set of vertices using $\V$ in Section \ref{sec:vertex_alg}. Finally, the true values at the cell interface $x_{i+\frac{1}{2}}$ are recorded to serve as target values that \net will aim to reconstruct. Overall, the dataset consists of 100,000 samples, roughly corresponding to 50,000 discontinuous samples and 50,000 smooth samples.

\begin{table}
\centering
\begin{tabular}{ |c|c|c|c| }
\hline
No. & Type & $u(x)$ & Parameters \\
\hline
1 & Smooth & $ax^3+bx^2+cx+d$ & $a,b,c,d\in\mathbb{U}[-10,10]$ \\
2 & Smooth & $(x-a)(x-b)(x-c)+d$ & $a,b,c,d\in\mathbb{U}[-2,2]$ \\
3 & Smooth & $\sin{(a\pi x + b)}$ & $a,b\in\mathbb{U}[-2,2]$  \\
4 & Discontinuous & $\begin{cases}
ax + b \text{ if } x\leq0.5\\
cx + d \text{ if } x>0.5
\end{cases}$ & $a,b,c,d\in\mathbb{U}[-5,5]$ \\
\hline
\end{tabular}
\caption{Functions used in constructing the training dataset where $\mathbb{U}[p,q]$ denotes the uniform distribution on the interval $[p,q].$ 
}
\label{tab:sampled_functions}
\end{table}

\begin{remark}
 We do not use solutions to conservation laws to train the network. We instead generate functions of varying regularity that canonically represent the local solution features we can expect to observe in solutions to conservation laws. Thus, there is an insignificant cost in generating the training data. A similar strategy was also considered in \cite{ray2018artificial,ray2019detecting}.
\end{remark}

\subsection{Network Architecture and Training}

Given the small input and output dimensions ($N_I = N_O = 5$) for the network, we use a fairly simple network architecture. In particular, we employ an MLP that consists of three hidden layers with five neurons each, leading to an MLP with $\# \bPhi = 120$ trainable parameters. The ReLU activation function is used in the hidden layers. Following the output layer, we use the softmax output function to ensure that the network predicts a vector of convex vertex weights $\bm{\alpha}$. The predicted $\bm{\alpha}$ are used to obtain the perturbation according to \eqref{eqn:C1_C2_calculation}, which are then used to reconstruct the (left and right) cell interface values according to \eqref{eqn:net_recon}.

To train the neural network, we use the Adam optimizer \cite{kingma2014adam} with optimizer parameters $\beta_1=0.5$, $\beta_2=0.9$, a learning rate of $10^{-3}$, and a weight decay regularization of $10^{-5}$. We perform a training/validation/test split in a 0.6/0.2/0.2 proportion where we seek to minimize the reconstruction error \eqref{eqn:loss_func}. We use mini-batches of size 500 and reshuffle the training set every epoch. The network is trained using PyTorch for 50 epochs on a 2.60 GHz Intel Core i7-10750H CPU. The training time for training one instance of this MLP is less than 30 seconds. We perform five training runs with different random initializations of the network weight and biases and select the network that performs the best on the test set. We remark that the true test of the network is based on its performance within the TeCNO schemes to solve conservation laws (see Section \ref{sec:numerical}).

\begin{remark}
    Other variations of the MLP's architecture (including activation functions) were considered, but ultimately this architecture yielded the best results.
\end{remark}

\subsection{Properties of \net}

\net by construction satisfies all properties of SP-WENO with the exception of the mirror property. This property is lost due to the nature of the vertex selection algorithm and computation of the convex weights by the neural network. While it is possible to have pursued a construction that preserves the mirror property, we choose to omit it so as not to overly constrain the network. Additionally, \net satisfies the following stability estimate for the reconstructed jump:
\begin{equation}\label{eqn:stab}
\left|\llbracket z\rrbracket_{i+\frac{1}{2}}\right| \leq \frac{1}{2}\left|\Delta z_{i-\frac{1}{2}}\right| + \left|\Delta z_{i+\frac{1}{2}}\right| + \frac{1}{2}\left|\Delta z_{i+\frac{3}{2}}\right| \hspace{3mm} \forall\hspace{1mm} i\in\mathbb{Z}
\end{equation}
which provides an explicit bound on the size of the reconstructed jump based on the size of
the original jumps of the point values in the local stencil. Similar upper bounds have been obtained for ENO reconstruction \cite{fjordholm2013eno}, SP-WENO \cite{fjordholm2016sign}, and SP-WENOc \cite{ray2018third}.
A proof of the bound \eqref{eqn:stab} can be found in Section \ref{sec:sm_stability}.

\begin{remark}\label{rem:pos_pre}
    We note that the \net reconstruction (as with any SP-WENO variation) is not positivity-preserving for the system of Euler equations, as there is no constraint present that enforces this. In the present work, we have demonstrated that it is possible to learn reconstruction methods by building in constraints of interest, such as the sign property. However, the same ideas can be used to also build in other desirable properties, such as positivity preservation for models such as the Euler system. While is was not the focus of the present work, these avenues will be explored in future work.
\end{remark}

\section{Numerical Results}\label{sec:numerical}
In this section, we present several numerical results to demonstrate the efficacy of the proposed \net approach. We consider both 1D and 2D test cases for the evolution of scalar and systems of conservation laws. 
In 2D, we discretize the domain $[a,b] \times [c,d]$ using a uniform mesh (in each dimension) with 
$h_x=(b-a)/N_x$ and $h_y=(d-c)/N_y$. 
Ghost cells are introduced to extend the mesh as needed while imposing either periodic or Neumann boundary conditions. In all evolution cases, we use the TeCNO4 numerical flux, consisting of the fourth-order accurate entropy conservative flux \eqref{4th_order_flux} and various sign-preserving reconstruction methods for the diffusion operator. SSP-RK3 (see \cite{gottlieb2001strong}) is used for time-marching with the CFL varying across problems. The code used to train \net and the C++ code to run the TeCNO schemes can be found at the repository: \url{https://github.com/pmcharles/DSP-WENO}

\subsection{Reconstruction Accuracy}
Before proceeding to solve conservation laws, we demonstrate the reconstruction accuracy of \net. We consider the reconstruction of a smooth inclined sine wave $u(x) = \sin{(10\pi x)} + x$ on $[0,1]$ as the mesh size $h$ is varied. We reconstruct the values at the cell interfaces, and evaluate the (averaged) error at the interfaces (from the left and right) as
\begin{equation*}
\mathcal{E}_h = \frac{1}{N}\sum_{i=1}^{N} |u^-_{i+\frac{1}{2}}-u(x_{i+\frac{1}{2}})| + \frac{1}{N}\sum_{i=1}^{N} |u^+_{i-\frac{1}{2}}-u(x_{i-\frac{1}{2}})|.
\end{equation*}

\begin{table}[]
    \centering
    \begin{tabular}{|c|c c|c c|c c|c c|}
     \hline
     N & \multicolumn{2}{c|}{ENO3} & \multicolumn{2}{c|}{SP-WENO} & \multicolumn{2}{c|}{SP-WENOc} & \multicolumn{2}{c|}{\net} \\
         & $\mathcal{E}_h$ & Rate & $\mathcal{E}_h$ & Rate & $\mathcal{E}_h$ & Rate & $\mathcal{E}_h$ & Rate \\
     \hline
     40  & 3.47e-2 & -    & 7.27e-2 & -    & 7.41e-2 & -    & 1.65e-1 & - \\
     80  & 4.54e-3 & 2.93 & 5.85e-3 & 3.64 & 6.37e-3 & 3.54 & 3.01e-2 & 2.45 \\
     160  & 5.84e-4 & 2.96 & 4.45e-4 & 3.72 & 4.71e-4 & 3.76 & 2.83e-3 & 3.41 \\
     320  & 7.42e-5 & 2.98 & 3.29e-5 & 3.76 & 3.43e-5 & 3.78 & 2.14e-4 & 3.73 \\
     640  & 9.38e-6 & 2.98 & 2.37e-6 & 3.79 & 2.46e-6 & 3.80 & 1.55e-5 & 3.78 \\
     1280 & 1.17e-6 & 3.00 & 1.68e-7 & 3.82 & 1.74e-7 & 3.82 & 1.22e-6 & 3.67 \\
     \hline
    \end{tabular}
    \caption{Reconstruction accuracy for inclined sine wave.}
    \label{tab:recon}
\end{table}

Table \ref{tab:recon} shows the errors and corresponding convergence rates for ENO3, SP-WENO, SP-WENOc, and \net. All reconstruction methods achieve the expected third-order convergence with the SP-WENO variants exhibiting super-convergence. Note that the errors of \net are about an order of magnitude larger than those of SP-WENO and SP-WENOc while being similar to ENO3 on the finer meshes. This leads to a larger reconstruction jump at interfaces where a discontinuity is present (see Section \ref{sec:rec_jump} in the SM for additional details), which plays a critical role in ensuring that the dissipation term in the TeCNO scheme injects sufficient viscosity near shocks to suppress spurious oscillations. We demonstrate this in the experiments that follow.


\subsection{Scalar conservation laws} 
For scalar conservation laws, we choose the squared entropy $\eta(u)=u^2/2$ with the corresponding entropy variable $v=\eta'(u)=u$. Also, since the scaled entropy variable are the same as the entropy variable, the finite difference TeCNO4 numerical flux \eqref{high_order_ent_stable_flux} is given by
\begin{equation}\label{eqn:scalar_tecno}
f_{i+\frac{1}{2}} = \tilde{f}^{4}_\iph - a_{i+\frac{1}{2}}\llbracket v\rrbracket_\iph, \quad a_\iph = \frac{|f^\prime(u_i)| + |f^\prime(u_{i+1})|}{2}.
\end{equation}

\subsubsection{Linear Advection}\label{sec:linadv}
Taking the flux to be $f(u) = cu$, the second-order entropy conservative flux used to construct $\Tilde{f}^4$ in \eqref{4th_order_flux} using $
\tilde{f}_\iph = c(u_i+u_{i+1})/2$. For all experiments, we set the convective velocity $c=1$.\\

\textbf{Smooth Initial Data:} We consider two test cases with smooth initial conditions on $[-\pi,\pi]$ and periodic boundary conditions
\begin{itemize} 
\item \textbf{Test 1:} $u_0(x) = \sin(x)$, simulated until final time $T=0.5$ with CFL = 0.4.
\item \textbf{Test 2:} $u_0(x) = \sin^4(x)$, simulated until final time $T=0.5$ with CFL = 0.5.
\end{itemize}

Table \ref{tab:lin_adv} shows the errors (measured in the discrete $L^1$ norm) with various reconstructions for both of these test cases (see Section \ref{sec:sm_LinAdv} for the $L^2$ and $L^{\infty}$ errors). ENO3 works well for Test 1 and achieves third-order convergence with marginally smaller absolute errors when compared to the results of SP-WENO, SP-WENOc, and \net. However, the convergence rate with ENO3 deteriorates in Test 2. It has been observed in \cite{rogerson1990numerical} that ENO3 performs poorly for this test case with the MUSCL scheme due to linear instabilities, and we observe the same issues with ENO3 in the TeCNO framework (also seen in \cite{fjordholm2016sign}). The other reconstruction methods are stable, as SP-WENO, SP-WENOc, and \net do not suffer from this issue. All three reconstruction methods achieve third-order convergence, though the errors of \net are about an order of magnitude larger than those of SP-WENO and SP-WENOc. Moreover, SP-WENO and SP-WENOc exhibit super-convergence. This can be attributed to the reconstructed jump often being zero, which effectively disables the local diffusion and results in the local numerical flux entirely consisting of the fourth-order entropy conservative flux.

\net is much more dissipative, so in these test cases and in general for smooth regions, we observe larger errors (though the third-order convergence is still maintained). However, this increase in diffusion leads to improved performance over SP-WENO and SP-WENOc when the solution is not smooth, as is demonstrated in the next test case.\\

\begin{table}[]
    \centering
    \begin{tabular}{|c|c|c c|c c|c c|c c|}
     \hline
     &N & \multicolumn{2}{c|}{ENO3} & \multicolumn{2}{c|}{SP-WENO} & \multicolumn{2}{c|}{SP-WENOc} & \multicolumn{2}{c|}{\net} \\
          & & Error & Rate & Error & Rate & Error & Rate & Error & Rate \\
     \hline
     \parbox[t]{2mm}{\multirow{6}{*}{\rotatebox[origin=c]{90}{Test 1}}} &100  & 3.23e-5 & -    & 6.88e-5 & -    & 6.76e-5 & -    & 9.09e-5 & - \\
     &200  & 4.04e-6 & 3.00 & 7.60e-6 & 3.18 & 7.46e-6 & 3.18 & 8.70e-6 & 3.39 \\
     &400  & 5.05e-7 & 3.00 & 8.27e-7 & 3.20 & 8.17e-7 & 3.19 & 9.11e-7 & 3.25 \\
     &600  & 1.50e-7 & 3.00 & 2.26e-7 & 3.20 & 2.27e-7 & 3.16 & 2.52e-7 & 3.17 \\
     &800  & 6.31e-8 & 3.00 & 8.73e-8 & 3.30 & 8.71e-8 & 3.32 & 1.01e-7 & 3.17 \\
     &1000 & 3.23e-8 & 3.00 & 4.22e-8 & 3.26 & 4.23e-8 & 3.24 & 5.02e-8 & 3.15 \\
     \hline
     \parbox[t]{2mm}{\multirow{6}{*}{\rotatebox[origin=c]{90}{Test 2}}} &100  & 1.48e-3 & -    & 1.47e-3 & -    & 1.45e-3 & -    & 2.07e-3 & - \\
     &200  & 1.98e-4 & 2.91 & 1.62e-4 & 3.17 & 1.60e-4 & 3.18 & 2.30e-4 & 3.18 \\
     &400  & 2.58e-5 & 2.94 & 1.75e-5 & 3.21 & 1.74e-5 & 3.20 & 2.69e-5 & 3.09 \\
     &600  & 8.25e-6 & 2.81 & 4.71e-6 & 3.24 & 4.71e-6 & 3.22 & 7.73e-6 & 3.07 \\
     &800  & 4.65e-6 & 1.99 & 1.84e-6 & 3.27 & 1.82e-6 & 3.30 & 3.27e-6 & 2.99 \\
     &1000 & 3.47e-6 & 1.32 & 8.94e-7 & 3.23 & 8.90e-7 & 3.22 & 1.66e-6 & 3.04 \\
     \hline
    \end{tabular}
    \caption{$L^1$ errors and convergence rates for linear advection smooth tests 1 and 2.}
    \label{tab:lin_adv}
\end{table}

\textbf{Moving Shapes:} The domain is $[0,1.4]$, final time is $T=1.4$, and CFL = 0.2, with initial profile given by
\begin{flalign*}
    u_0(x) =& 10(x-0.2) \chf{(0.2,0.3]}(x) + 10(0.4 - x) \chf{(0.3,0.4]}(x) \\
    &+ \chf{(0.6,0.8]}(x) + 100 (x-1)(1.2-x) \chf{(1,1.2]}(x)
\end{flalign*}
where $\chf{\mathcal{A}}(x)$ denotes the characteristic function on set $\mathcal{A}$. Note that the initial profile is a composition of shapes with different orders of regularity. The results on a mesh with 100 cells with peridoc boundary conditions for various reconstruction methods are shown in Figure \ref{fig:lin_adv3}.
Overall, ENO3 yields the best performance, as its solution is the closest to the exact solution while avoiding any spurious oscillatory behavior. The solution obtained with \net appears to be marginally more dissipative than ENO3, but without exhibiting the large under/overshoots observed with SP-WENO or SP-WENOc.

\begin{figure}
    \centering
    \includegraphics[width=3.8in]{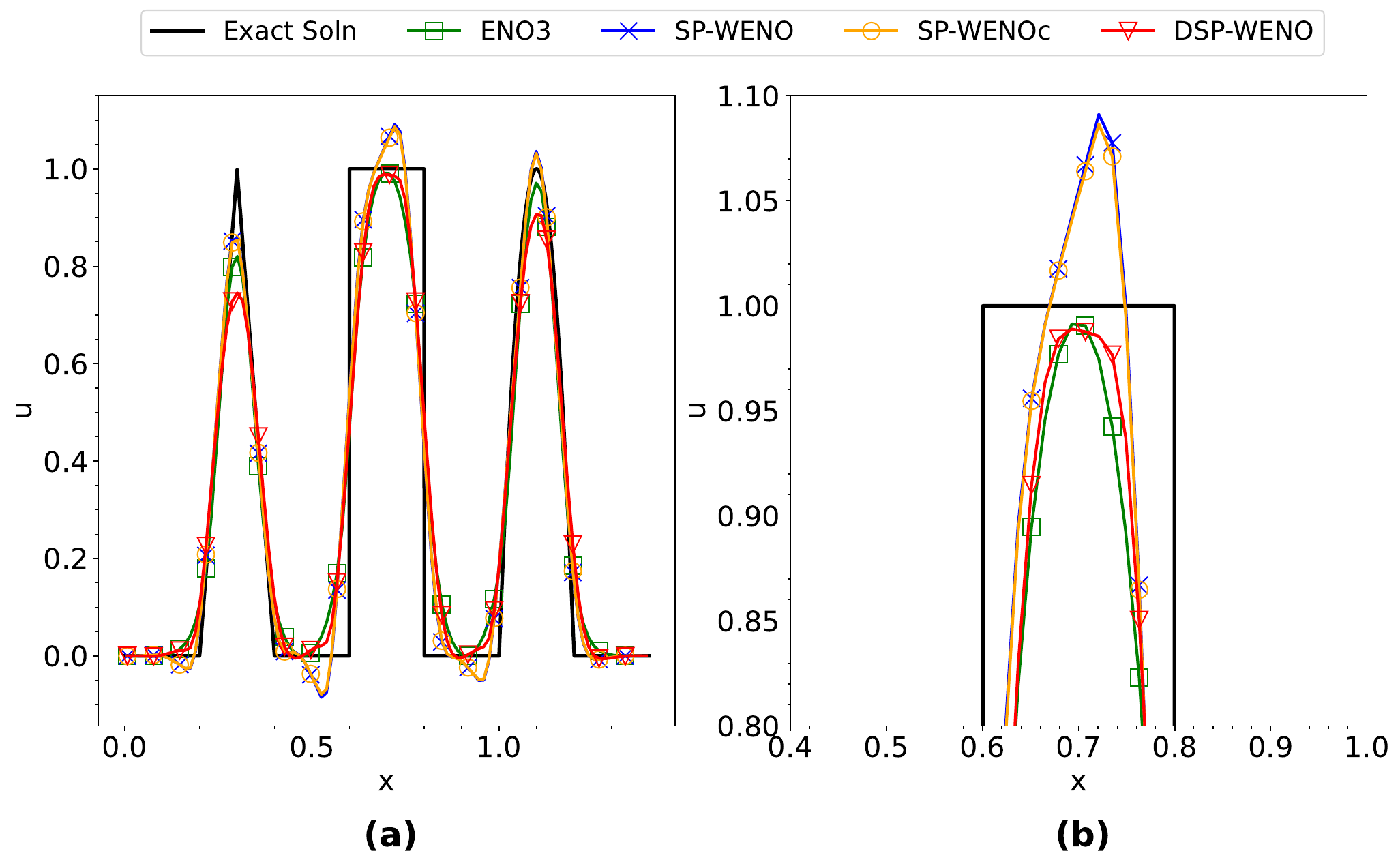}
    \caption{Linear advection of shapes: Solution at time $T=1.4$. Comparison of different reconstruction methods: \textbf{(a)} $x\in[0,1.4]$, \textbf{(b)} $x\in[0.4,1]$.}
    \label{fig:lin_adv3}
\end{figure}

\subsubsection{A Note on the Computational Cost}
In general, \net is a more expensive reconstruction than its SP-WENO and SP-WENOc counterparts. As a first step, we estimate the computational times to reconstruct on 100,000 random samples of local stencil values with the various reconstruction algorithms. These times were 0.852s (\net), 0.178s (SP-WENO), 0.192s (SP-WENOc), and 0.839s (ENO3). It is clear that \net can be up to approximately 5 times slower than SP-WENO/SP-WENOc, while being comparable to ENO3. Furthermore, we observed that the vertex search algorithm is the most expensive part of \net (95\% of the total cost).

Next, we assess the computational runtimes when the reconstructions are used within a TecNO scheme. Figure \ref{fig:comp_cost} shows the runtimes plotted against the $L^1$ errors for the reconstruction methods for both smooth linear advection tests of Section \ref{sec:linadv}. Note that the markers here correspond to the various mesh resolutions considered in Table \ref{tab:lin_adv}. We observe that \net can be be two to three times as expensive than the other methods at finer resolutions. This increased cost, even when compared to ENO3, can be potentially explained by the fact that the current version of the code loops over the cell interfaces in the mesh to compute the reconstructed jump required by the dissipation operator. Thus, the network for \net is queried one input at a time, which is a very inefficient way of using a network. We expect that a batched evaluation of the network on the mesh, along with a more efficient (vectorized) implementation of the vertex algorithm, can significantly lower the overall cost of using \net. However, this is not the primary focus of the present work.

\begin{figure}
    \centering
    \includegraphics[width=3.8in]{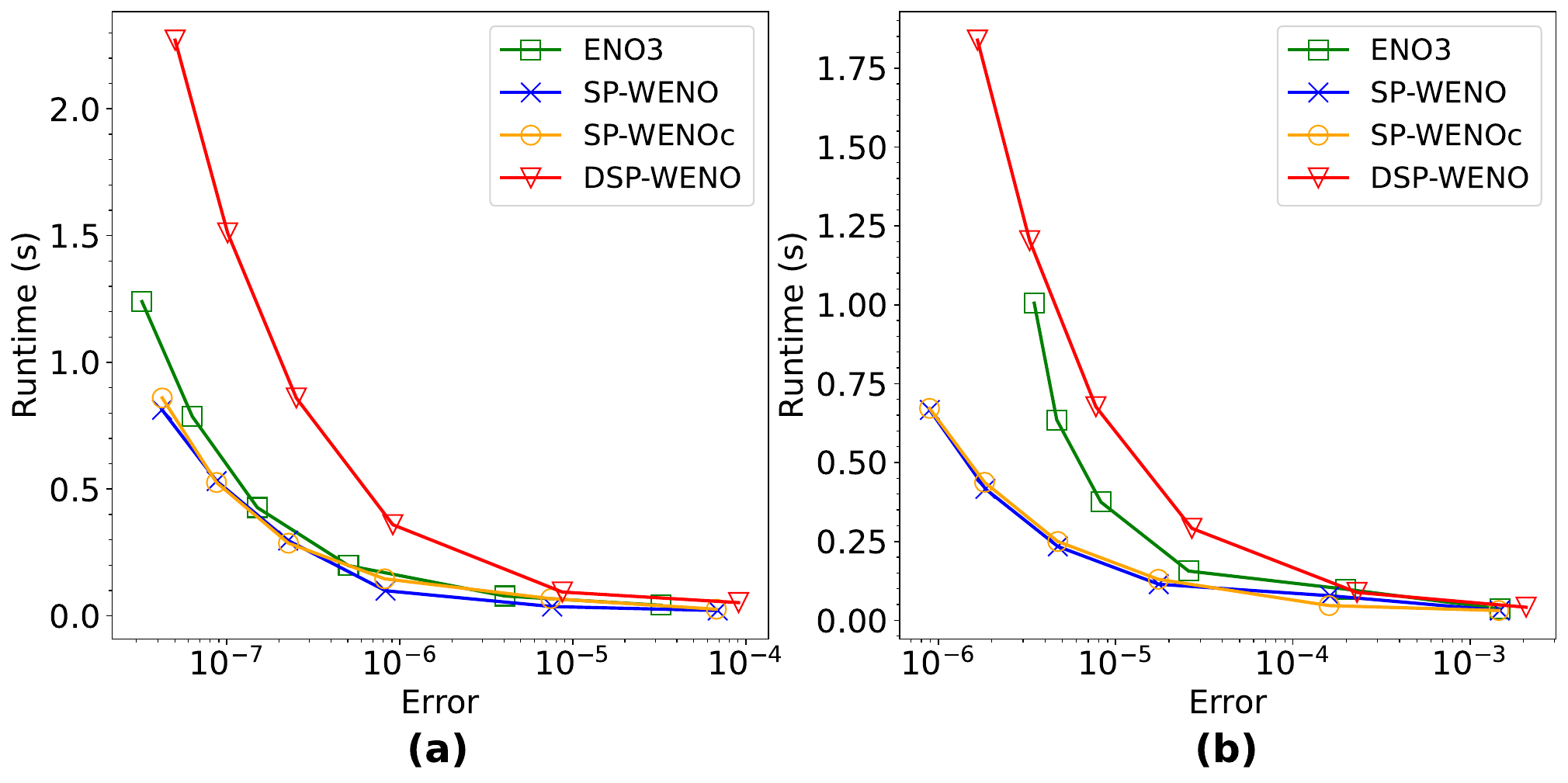}
    \caption{Runtime vs. $L^1$ error for linear advection: \textbf{(a)} Test 1, \textbf{(b)} Test 2.}
    \label{fig:comp_cost}
\end{figure}

\subsubsection{Burgers' Equation}\label{sec:burgers}

We now consider the Burgers' Equation, i.e., $f(u) = u^2/2$. In this case, the second-order entropy conservative flux corresponding to the quadratic entropy can be expressed as $\tilde{f}_\iph = (u_i^2+u_{i+1}^2 + u_iu_{i+1})/6$. \\

\textbf{Test 1:} The domain is $[-1,1]$, final time is $T=0.5$, and CFL = 0.4, with initial condition given by $u_0(x) = 3 \chf{\{x<0\}}(x) - \chf{\{x\geq0\}}(x)$.
The mesh consists of 100 cells with Neumann boundary conditions. Figure \ref{fig:burgers1} shows that all the solutions experience oscillations. This is a fundamental problem with high-order TeCNO schemes, so there is often no way to completely eliminate the overshooting behavior without introducing additional diffusion. However, we observe that \net performs better in mitigating the oscillations leading up to the overshoot.

\begin{remark} 
While omitted from Figure \ref{fig:burgers1}, we found that these oscillations cannot be avoided even with the most restrictive linear reconstruction using the minmod limiter, which also satisfies the sign property \cite{fjordholm2012arbitrarily}.
\end{remark}

\begin{figure}
    \centering
    \includegraphics[width=3.8in]{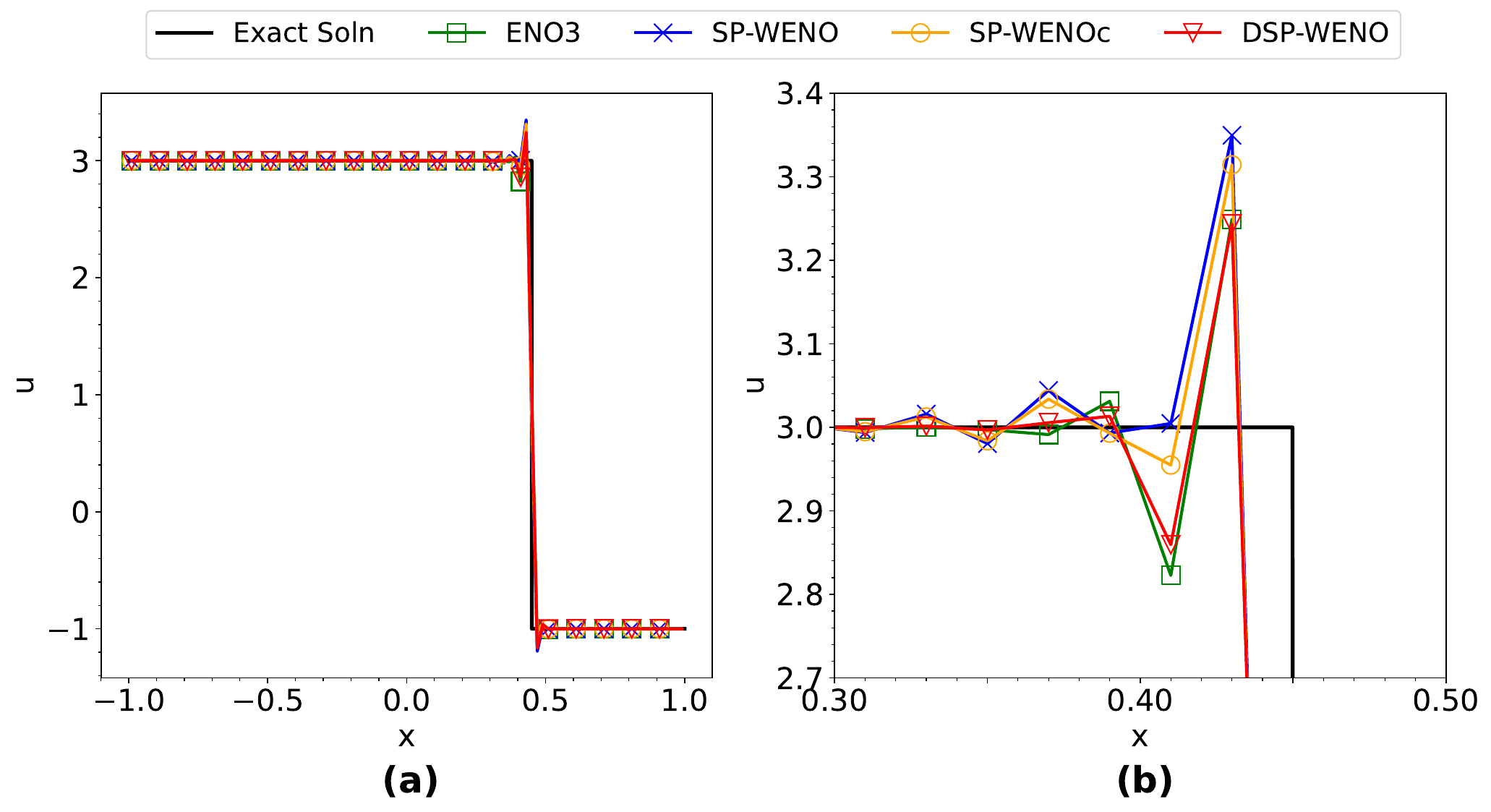}
    \caption{Burgers' Test 1: Solution at time $T=0.5$. Comparison of different reconstruction methods: \textbf{(a)} $x\in[-1,1]$, \textbf{(b)} $x\in[0.3,0.5]$.}
    \label{fig:burgers1}
\end{figure}

\textbf{Test 2:} The domain is $[-4,4]$, final time is $T=0.4$, and CFL = 0.4, with initial profile
\[
u_0(x) = 3 \chf{[-1,-0.5)}(x) + \chf{[-0.5,0)}(x) + 3\chf{[0,0.5)}(x)+ 2\chf{[0.5,1)}(x) + \sin(\pi x) \chf{\{|x|>1\}}(x)
\]
and periodic boundary conditions. This initial condition is taken from \cite{ray2018artificial} and contains both smooth and discontinuous features. When solved on a mesh with 400 cells, Figure \ref{fig:burgers2} shows that SP-WENO and SP-WENOc perform poorly near the shocks, especially at $x=1.3$ where the overshoots are relatively large. While the oscillations are still present with \net, they are significantly mitigated when compared to the other SP-WENO methods.

\begin{figure}
    \centering
    \includegraphics[width=3.8in]{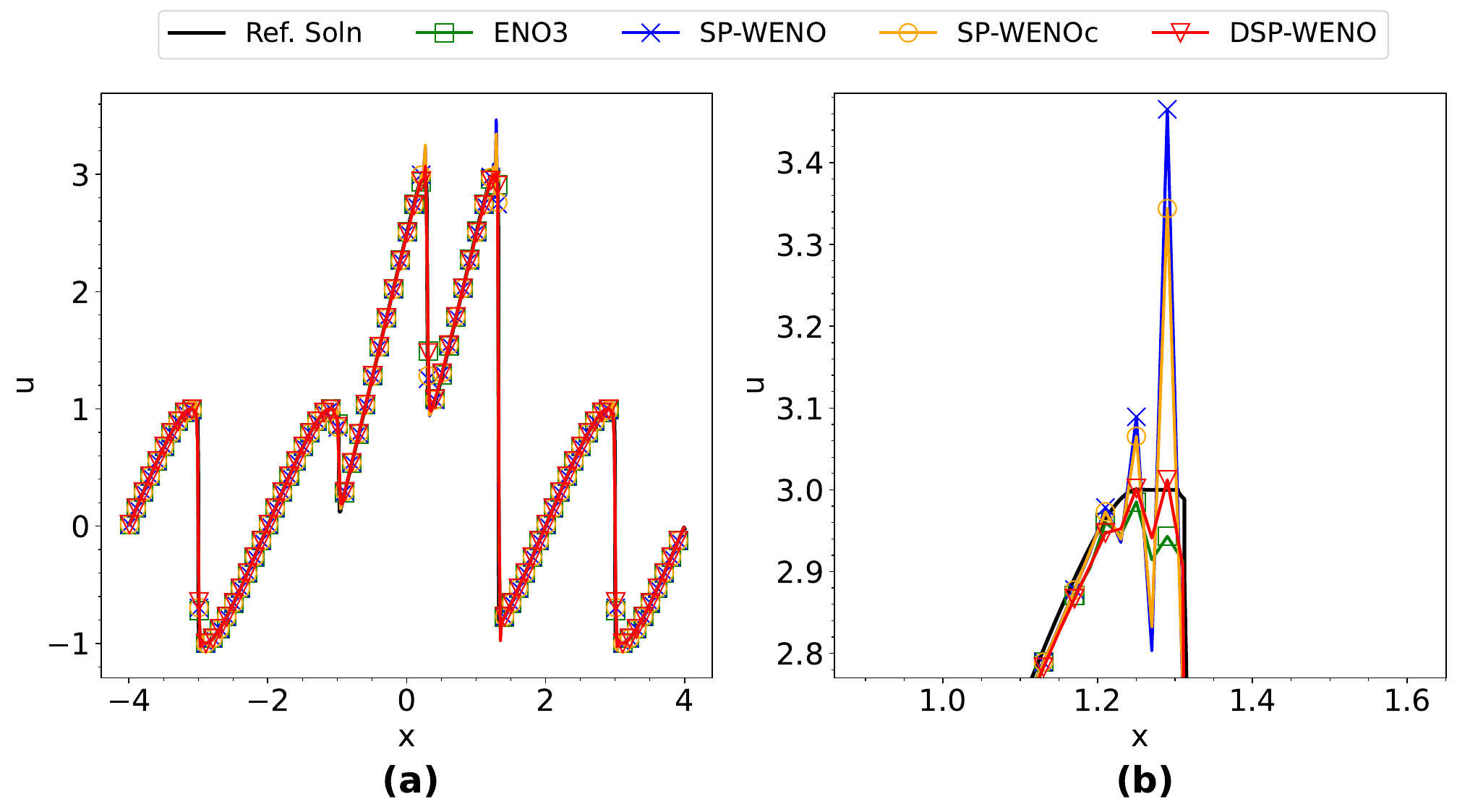}
    \caption{Burgers' Test 2: Solution at time $T=0.4$. Comparison of different reconstruction methods: \textbf{(a)} $x\in[-4,4]$, \textbf{(b)} $x\in[0.86,1.65]$.}
    \label{fig:burgers2}
\end{figure}

\subsection{Euler Equations}\label{sec:euler}

We now consider the Euler equations described by

\begin{flalign*}
\frac{\partial}{\partial t}\begin{pmatrix}\rho\\\rho \velvec\\E\end{pmatrix} + \nabla\cdot\begin{pmatrix}\rho\velvec\\p\mathbf{I} + \rho(\velvec\otimes\velvec)\\(E+p)\velvec\end{pmatrix} = \mathbf{0},
\end{flalign*}
where $\rho$, $\velvec = (\vel_1,\vel_2,\vel_3)^\top$, and $p$ represent the fluid density, velocity, and pressure, respectively. Note that $\velvec$ is distinct from $\con$, with the latter representing the vector of conserved variables. The total energy per unit volume given by 
$ E = \rho |\velvec|^2/2 + p/(\gamma-1)$
where $\gamma = c_p/c_v$ denotes the ratio of specific heats. In all test cases, we set $\gamma=1.4$. 

For the Euler equations, a popular choice for the entropy-entropy flux pair \cite{harten83} is $\eta = -\rho s/(\gamma-1)$, $\bm{q} = [q_1,q_2, q_3] = \eta \velvec^\top$
where $s=\ln{(p)} - \gamma\ln{(\rho)}$ is associated with the thermodynamic specific entropy. The corresponding vector of entropy variables is given by $\ent = [-\beta|\velvec|^2 - (\gamma-s)/(\gamma-1), \ 2\beta\velvec^{\top}, \ -2\beta]^\top$
where $\beta = \rho/(2p)$.

To construct the fourth-order entropy conservative flux \eqref{4th_order_flux} we use the second-order kinetic energy preserving and entropy conservative (KEPEC) flux \cite{chandrashekar2013kinetic}, which in 1D is expressed as:
\begin{equation}
\f = \begin{pmatrix}
F^{\rho}\\F^m\\F^e
\end{pmatrix} = \begin{pmatrix}
\hat{\rho}\overline{\vel}\\\Tilde{p} + \overline{\vel}F^{\rho}\\F^e
\end{pmatrix}, \hspace{3mm} F^e=\left[\frac{1}{2(\gamma-1)\hat{\beta}}-\frac{1}{2}\overline{|\vel|^2}\right]F^{\rho} + \overline{\vel}F^m,
\end{equation}
where $\Tilde{p}=\overline{\rho}/(2\overline{\beta})$ and $\beta=\rho/(2p)$. $\hat{\rho}$, $\hat{\beta}$ represent the logarithmic averages (see \cite{ismail2009,chandrashekar2013kinetic}) of the respective positive quantities.

For the diffusion term in \eqref{high_order_ent_stable_flux}, we use the Roe-type diffusion operator described in Section \ref{sec:fin_dif_schemes} with $\mathbf{\Lambda}_{i+\frac{1}{2}} = \diag{\left(|u-a|,|u|,|u+a|\right)}_\iph$
where $a = \sqrt{\gamma p/\rho}$ is the speed of sound in air. The various terms in the matrices of the diffusion operator are evaluated at some averaged state at the cell interface. For additional details on the diffusion operator, and the flux formulation in higher dimensions, we refer the readers to \cite{chandrashekar2013kinetic,ray2018third}.\\



\textbf{1D Modified Sod Shock Tube:} This is a shock tube problem \cite{toro2013riemann} solved on the domain is $[0,1]$ with initial profile $(\rho,\vel,p) = (1,0.75,1) \chf{\{x<0.3\}}(x) + (0.125,0,0.1) \chf{\{x\geq0.3\}}(x)$.
It is solved on a domain with 400 cells with Neumann boundary conditions, until a final time of $T=0.2$ with CFL=0.4. Figure \ref{fig:modsod} shows that SP-WENO and SP-WENOc exhibit significant overshoots, especially near the shock at $x\approx 0.72$. \net markedly improves on the performance of both methods by featuring only very minor overshoots in comparison. Further, the accuracy in shock-capturing is comparable to ENO3, with a sharper resolution of the contact discontinuity at $x\approx0.57$.\\

\begin{figure}
    \centering
    \includegraphics[width=3.8in]{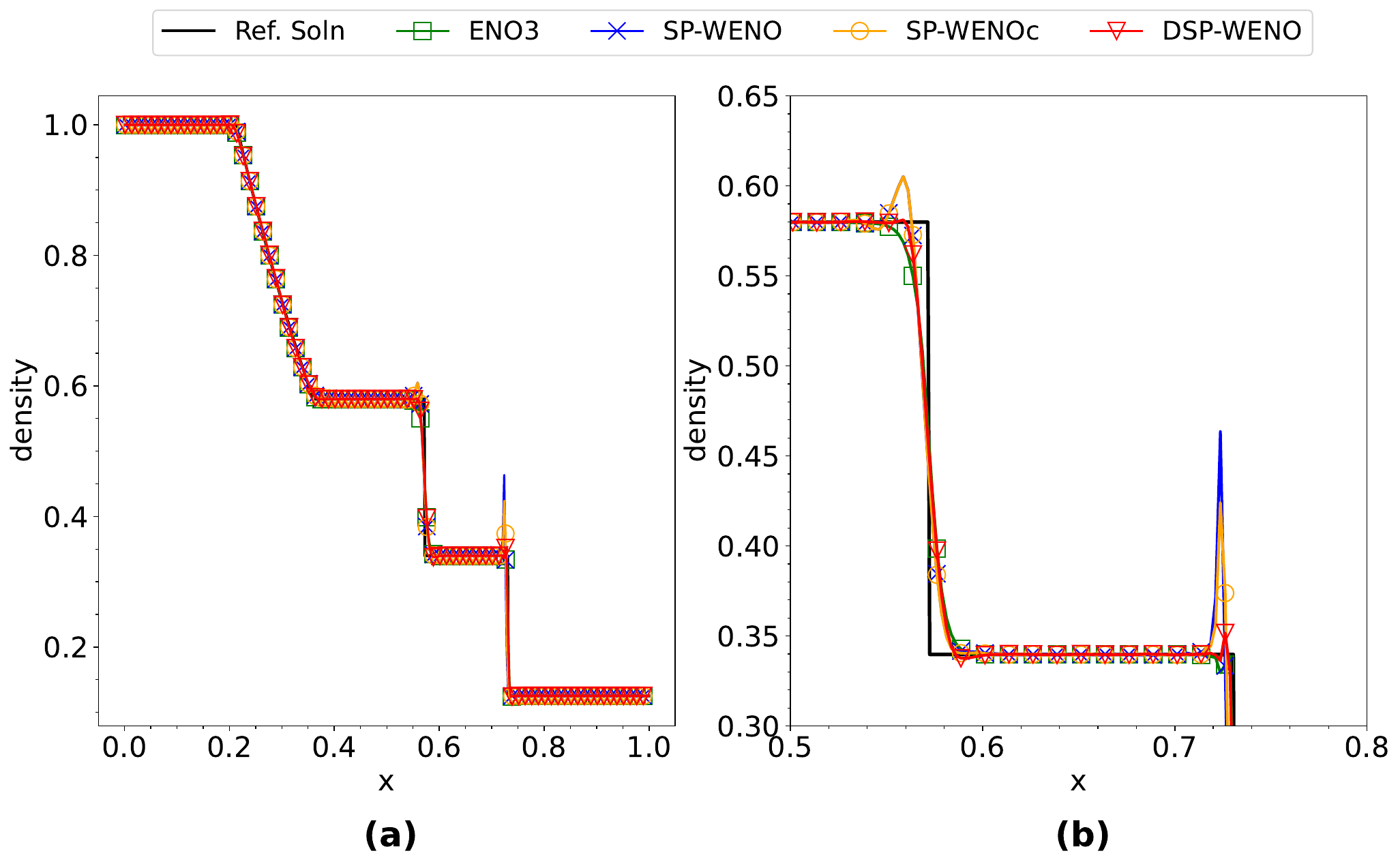}
    \caption{Modified Sod Shock Tube Test. Comparison of density profile at final time $T=0.2$ for different reconstruction methods: \textbf{(a)} $x\in[0,1]$, \textbf{(b)} $x\in[0.5,0.8]$.}
    \label{fig:modsod}
\end{figure}

\textbf{1D Shu-Osher Test:} This test case, proposed in \cite{shu1989efficient}, contains the interaction of an oscillatory smooth wave and a right-moving shock. The domain is $[-5,5]$, final time is $T=1.8$, and CFL = 0.4, with initial profile
\begin{flalign*}
(\rho,\vel,p) =& 
(3.857143, \ 2.629369, \ 10.33333)\chf{\{x<-4\}}(x) \\
&+ 
(1+0.2\sin{(5x)}, \ 0,\ 1) \chf{\{x\geq-4\}}(x),
\end{flalign*}
and Neumann boundary conditions. We solve on a mesh consisting of 400 cells, which is necessary to resolve the high frequency physical oscillations in the solution. This was one of the test cases presented in \cite{ray2018third} where SP-WENOc significantly mitigates the overshoots near the shock as compared to SP-WENO, which is also what we observe in Figure \ref{fig:shuosher}. \net is clearly the most dissipative of the methods (although comparable to ENO3) when focusing in the regions with smooth physical high-frequency oscillations. However, there is essentially no overshoot near the shock with \net, which is a large improvement over SP-WENO and SP-WENOc. \\

\begin{figure}
    \centering
    \includegraphics[width=3.8in]{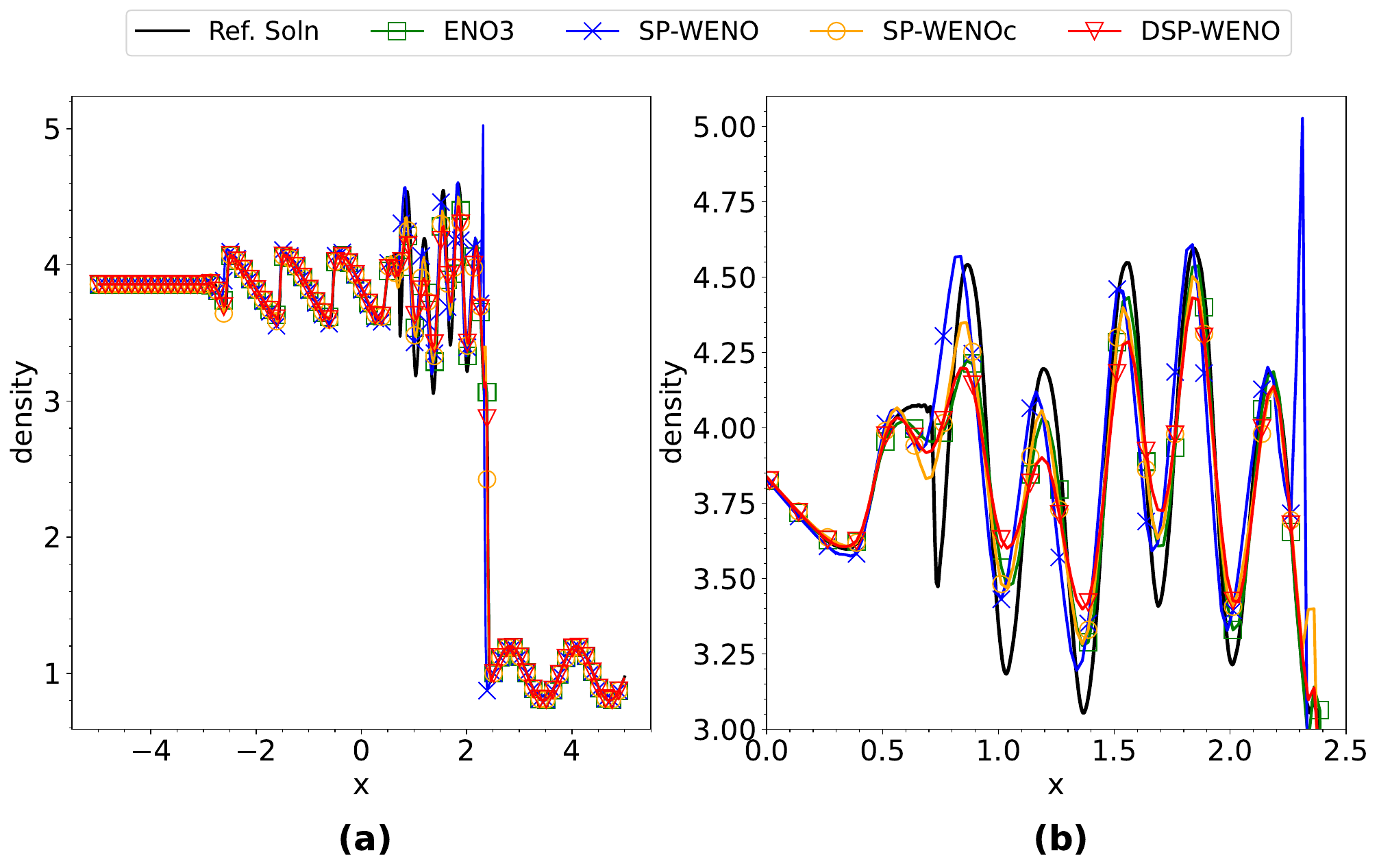}
    \caption{Shu-Osher Test: Comparison of density profile at final time $T=1.8$ for different reconstruction methods: \textbf{(a)} $x\in[-5,5]$, \textbf{(b)} $x\in[0,2.5]$.}
    \label{fig:shuosher}
\end{figure}

\textbf{1D Lax Test:} The Lax shock tube problem \cite{lax1954weak} is described by the initial profile 
\begin{flalign*}
    (\rho,\vel,p) = 
(0.445,\ 0.698,\ 3.528)\chf{\{x<0\}}(x) + 
(0.5, \ 0, \ 0.571) \chf{\{x\geq0\}}(x)
\end{flalign*}
on the domain $[-5,5]$ with Neumann boundary conditions. The problem is solved on a mesh with 200 cells with CFL=0.4 until a final time $T=1.3$. Figure \ref{fig:lax} shows that SP-WENO and SP-WENOc exhibit significant overshoots near the shocks. The solution obtained with \net greatly minimizes the oscillatory behavior, similar to the previous test cases. \\

\begin{figure}
    \centering
    \includegraphics[width=3.8in]{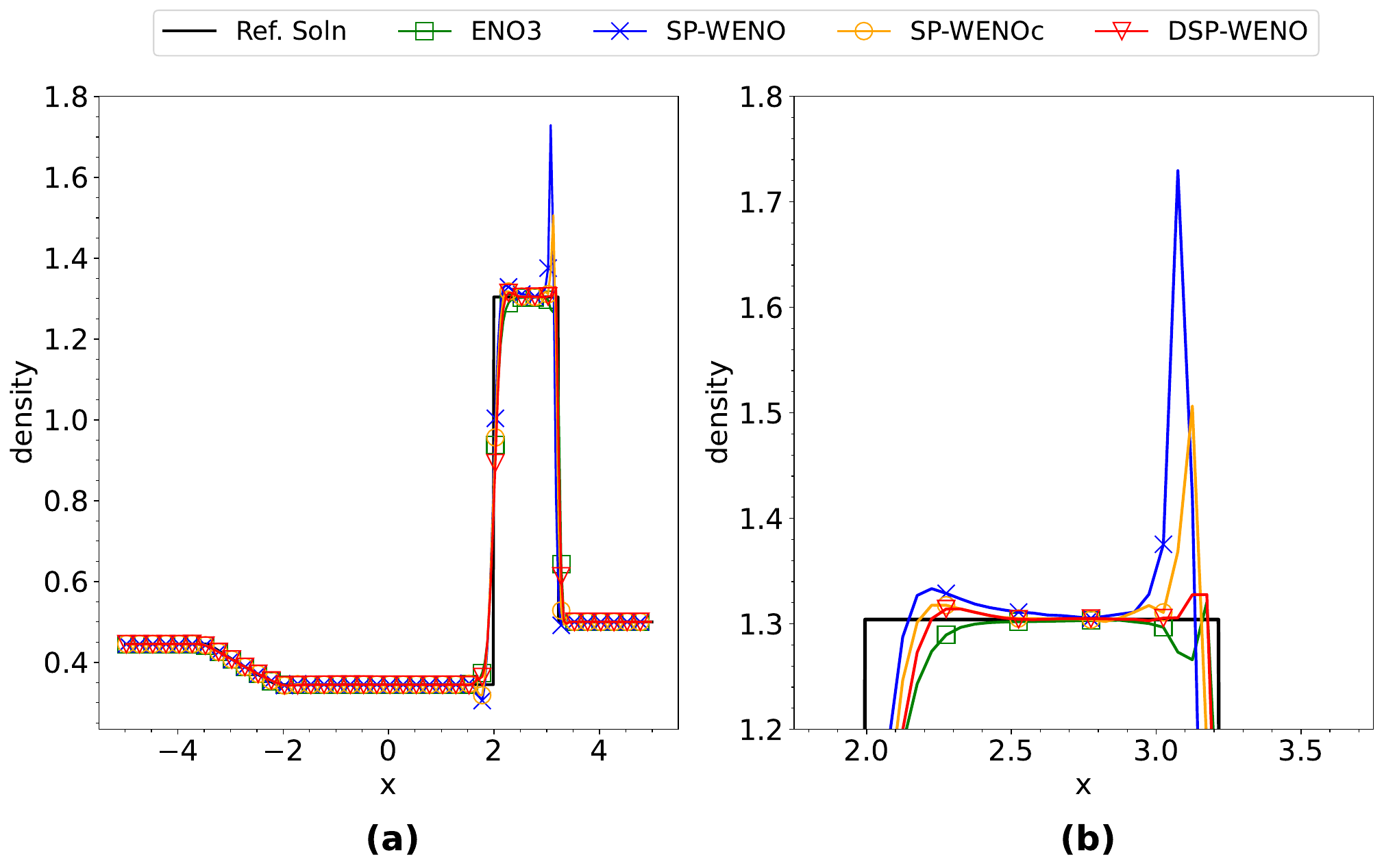}
    \caption{Lax Shock Tube Test. Comparison of density profile at final time $T=1.3$ for different reconstruction methods: \textbf{(a)} $x\in[-5,5]$, \textbf{(b)} $x\in[1.75,3.75]$.}
    \label{fig:lax}
\end{figure}

\textbf{2D Isentropic Vortex:} We consider the advection of a smooth isentropic vortex and perform a mesh-refinement study. The domain is $[-5,5]\times[-5,5]$, final time is $T=10$, and CFL = 0.5. The initial conditions are given by
\begin{flalign*}
\vel_1 {=} 1-\frac{\Gamma y}{2\pi}\exp{\Big(\frac{1-r^2}{2}\Big)},  \ \  \vel_2 {=}  \frac{\Gamma x}{2\pi}\exp{\Big(\frac{1-r^2}{2}\Big)},  \ \  \temp {=} 1 - \frac{(\gamma-1)\Gamma^2}{8\gamma\pi^2}\exp{(1-r^2)},
\end{flalign*}
where $\temp$ is the fluid temperature field, $r^2=x^2+y^2$
and $\Gamma=5$ (the vortex strength). Further, we have $\rho=\temp^{1/(\gamma-1)}$ and $p=\temp^{\gamma/(\gamma-1)}$ due to the isentropic conditions. Assuming periodic boundary condition, the vortex moves horizontal with unit velocity and completes one full periodic cycle at the final time. Table \ref{tab:isent_vortex} shows the discrete $L^1$ errors and convergence rates for the density with various reconstructions. We observe that ENO3 is unable to achieve third-order convergence, which we hypothesize is due to the linear instabilities with ENO \cite{rogerson1990numerical}. The SP-WENO variants lead to third-order accuracy, with the results using \net being more dissipative as compared to SP-WENO and SP-WENOc. We reiterate that this behavior is due to SP-WENO and SP-WENOc having nearly zero reconstructed jumps in larger regions of the domain, which is not preferable in the presence of discontinuities.\\


\begin{table}[!h]
    \centering
    \begin{tabular}{|c|c c|c c|c c|c c|}
     \hline
     N & \multicolumn{2}{c|}{ENO3} & \multicolumn{2}{c|}{SP-WENO} & \multicolumn{2}{c|}{SP-WENOc} & \multicolumn{2}{c|}{\net} \\
     & Error & Rate & Error & Rate & Error & Rate & Error & Rate \\
     \hline
     50  & 1.17e-1 & -    & 7.56e-2 & -    & 7.51e-2 & -    & 2.09e-1 & - \\
     100 & 1.75e-2 & 2.74 & 6.62e-3 & 3.51 & 6.73e-3 & 3.48 & 2.13e-2 & 3.30 \\
     150 & 7.13e-3 & 2.22 & 1.37e-3 & 3.88 & 1.40e-3 & 3.87 & 5.94e-3 & 3.15 \\
     200 & 3.60e-3 & 2.38 & 4.59e-4 & 3.80 & 4.69e-4 & 3.80 & 2.42e-3 & 3.13 \\
     300 & 1.48e-3 & 2.19 & 1.01e-4 & 3.73 & 1.03e-4 & 3.74 & 6.73e-4 & 3.15 \\
     400 & 7.86e-4 & 2.20 & 3.57e-5 & 3.61 & 3.64e-5 & 3.61 & 2.73e-4 & 3.14 \\
     \hline
    \end{tabular}
    \caption{$L^1$ errors in density for advecting isentropic vortex with different reconstructions.}
    \label{tab:isent_vortex}
\end{table}

\textbf{2D Riemann problem (configuration 12):}  We consider a two-dimensional Riemann problem for the Euler equations whose initial conditions on the domain $[0,1]\times[0,1]$ are given by
\begin{flalign*}
(\rho,\vel_1,\vel_2,p) =\ & 
(0.5313,0,0,0.4) \chf{\mathcal{Q}_1}(x,y) +
(1,0.7276,0,1) \chf{\mathcal{Q}_2}(x,y) \\
&+ (0.8,0,0,1) \chf{\mathcal{Q}_3}(x,y) +(1,0,0.7276,1)\chf{\mathcal{Q}_4}(x,y)
\end{flalign*}
In this case, the evolved solution (with Neumann boundaries) comprises two shock waves and two contact discontinuities. The problem is solved on a mesh consisting of 400 $\times$ 400 cells until a final time $T=0.25$ with CFL = 0.5. All methods sharply resolve the shocks and contact waves as shown in Figure \ref{fig:2D_riemann12} (see Section \ref{sec:sm_2DRiemann_conf12} in the SM for the SP-WENO and SP-WENOc density profiles). However, SP-WENO and SP-WENOc lead to significant overshoots, which can once again be seen in the one-dimensional slices shown in Figure \ref{fig:2D_riemann12}. On the other hand, the overshoots are generally smaller with \net and comparable to ENO3. The reference solution is once again generated using ENO3 by solving the problem on $1200 \times 1200$ mesh.\\

\begin{figure}
    \centering
    \subfigure[ENO3]{\includegraphics[width=2.5in]{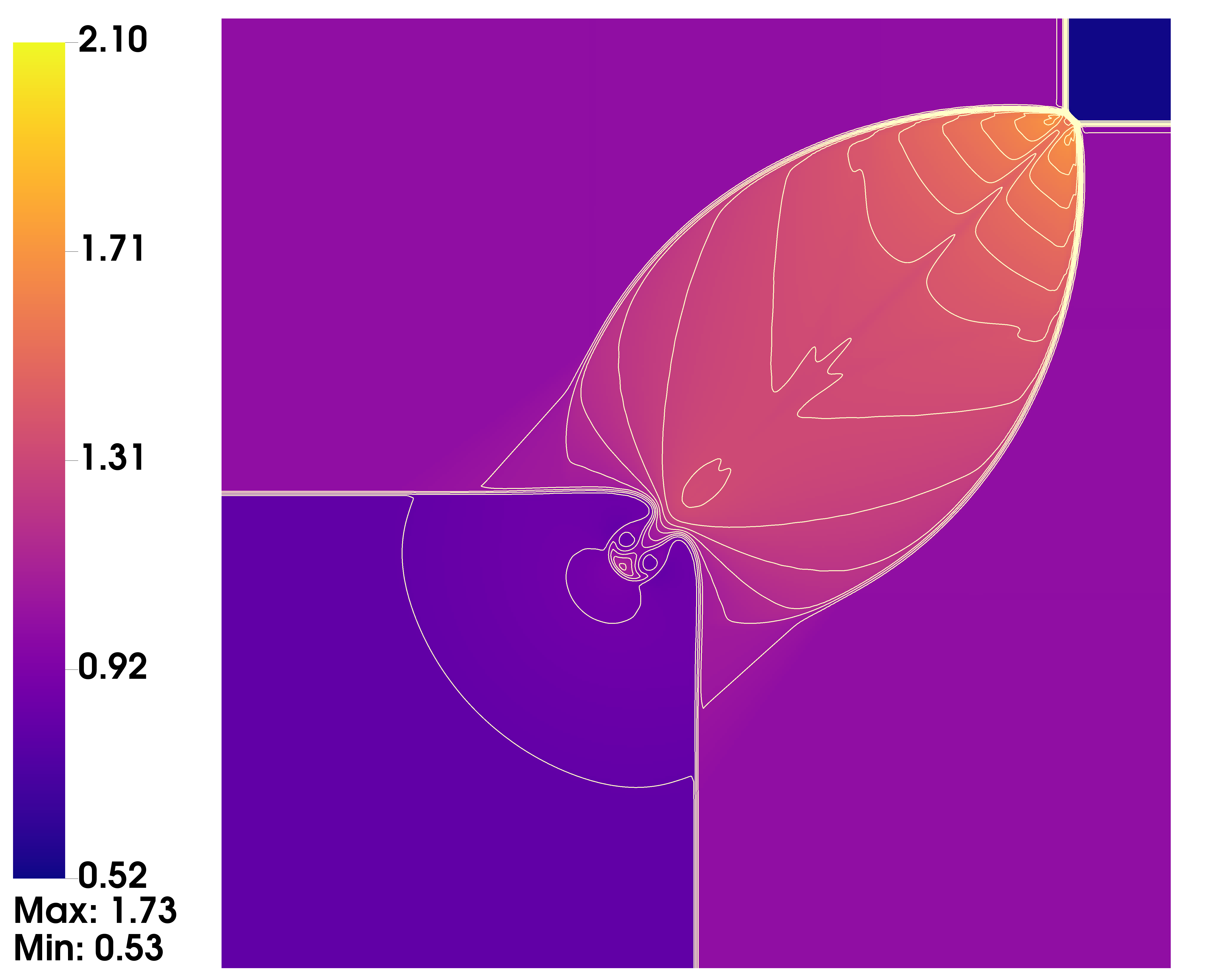}}
    \subfigure[\net]{\includegraphics[width=2.5in]{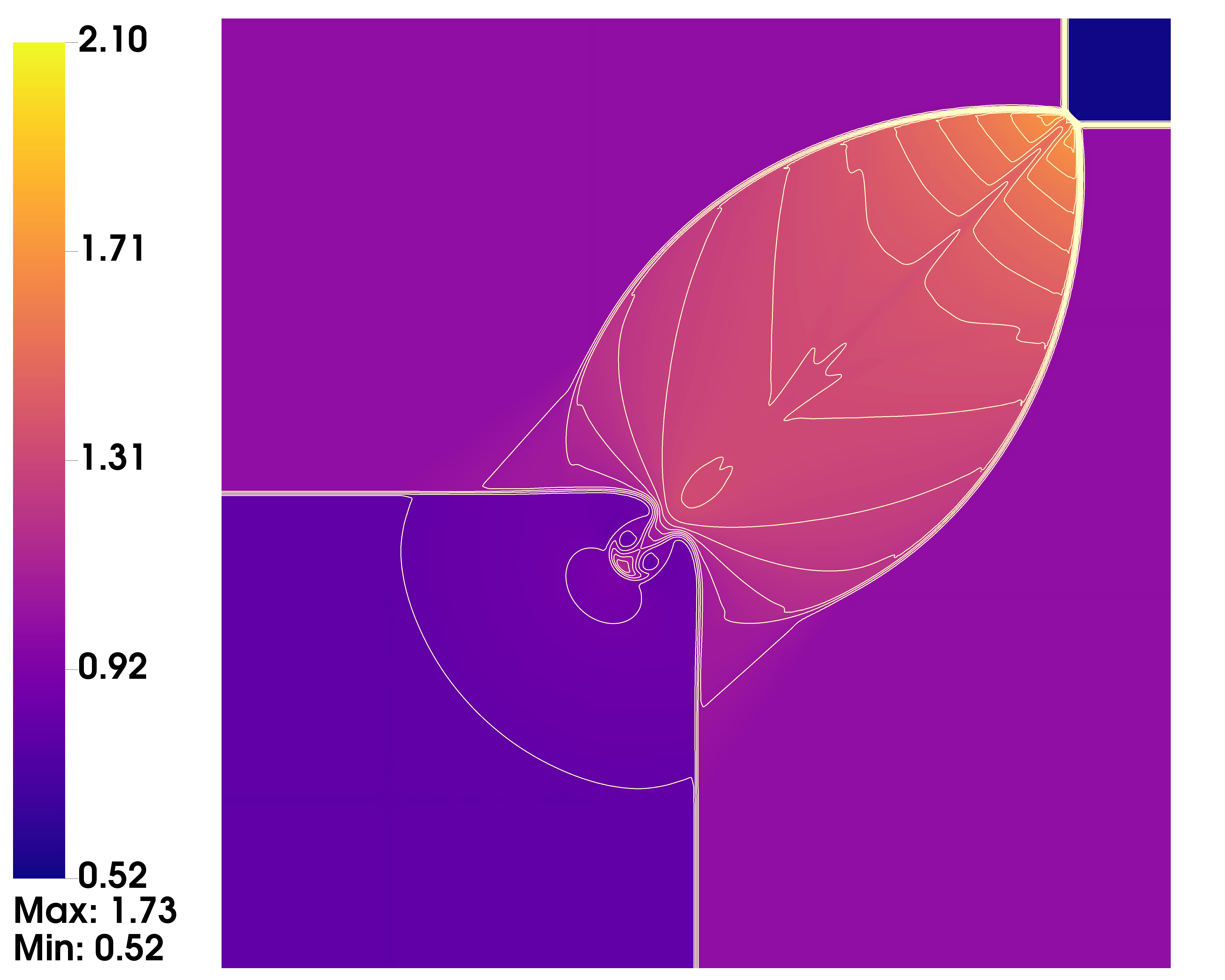}}
    \subfigure{\includegraphics[width=3.8in]{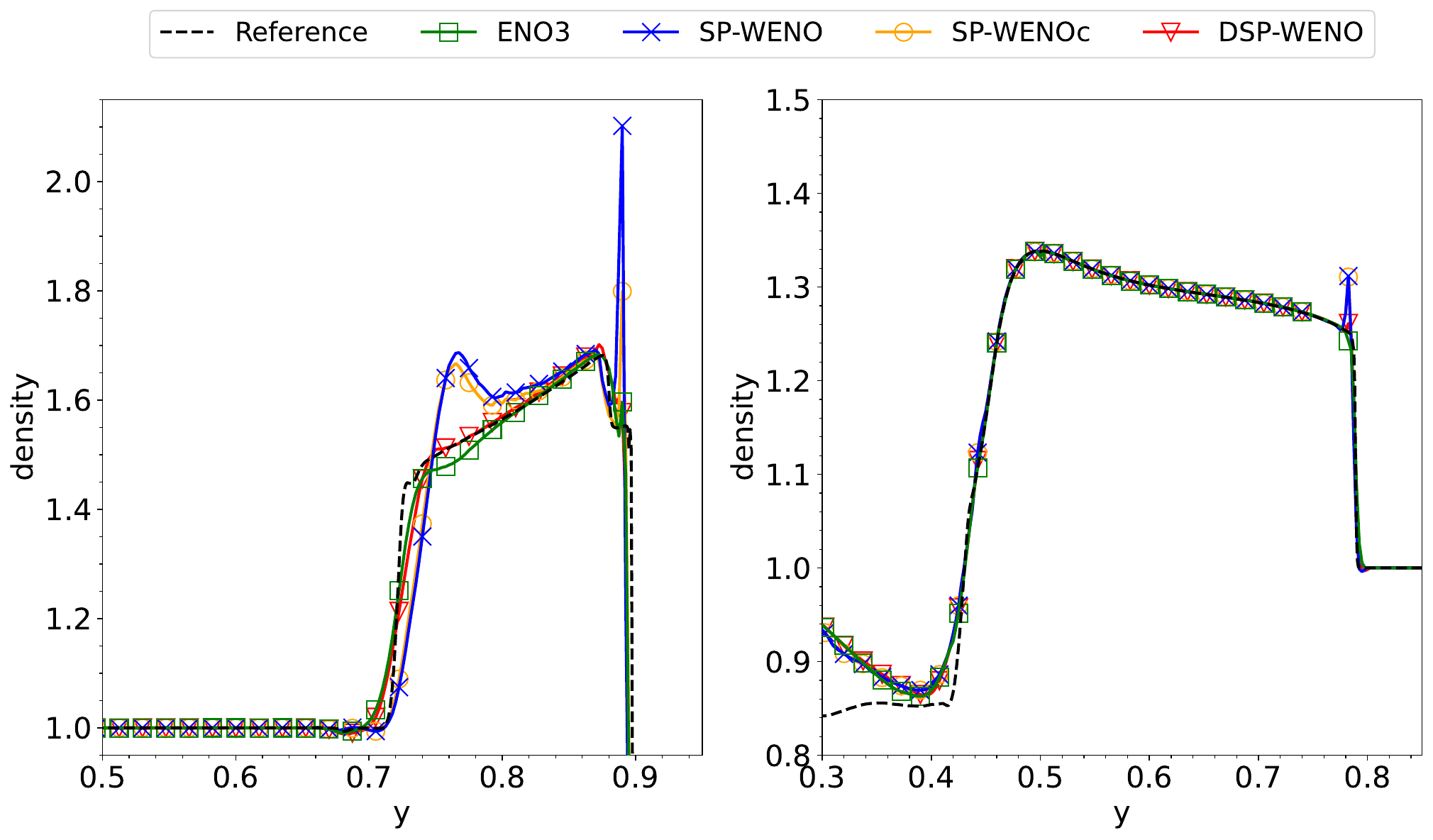}}
    \caption{2D Riemann problem (conf. 12): Density profiles at time $T=0.25$ with 30 contour lines between 0.52 and 2.2. Comparison of different reconstruction methods (top). One-dimensional slices of the solution using each reconstruction methods through $x=0.89$ (bottom left) and $x=0.5$ (bottom right).}
    \label{fig:2D_riemann12}
\end{figure}

\textbf{2D Riemann problem (configuration 3):} We consider another two-dimensional Riemann problem for the Euler equations \cite{kurganov2002solution} whose initial conditions are
\begin{flalign*}
(\rho,\vel_1,\vel_2,p) = & 
(1.5,0,0,1.5) \chf{\mathcal{Q}_1}(x,y) +
(0.5323,1.206,0,0.3) \chf{\mathcal{Q}_2}(x,y) \\
&+ (0.138,1.206,1.206,0.029) \chf{\mathcal{Q}_3}(x,y) +(0.5323,0,1.206,0.3)\chf{\mathcal{Q}_4}(x,y)
\end{flalign*}
where $\mathcal{Q}_1 =\{x>0.5, y>0.5\}$, $\mathcal{Q}_2 =\{x\leq0.5, y>0.5\}$, $\mathcal{Q}_3 =\{x\leq0.5, y\leq0.5\}$, and $\mathcal{Q}_4 =\{x>0.5, y\leq0.5\}$ are the four Cartesian quadrants.
When solved on the domain $[0,1]\times[0,1]$ with Neumann boundary conditions, the solution evolves into four interacting shock waves. We solve the problem on a mesh consisting of 400 $\times$ 400 cells until a final time $T=0.3$ with CFL = 0.4. We observe from the density profiles in Figure \ref{fig:2D_riemann3} that both SP-WENO and \net lead to a protruding artifact in the bottom left region of the domain which resembles carbuncle-like phenomenon 
\cite{quirk1997contribution,ismail2009proposed}. This artifact is significantly more pronounced for SP-WENO.
We remark here that entropy stability does not preclude the appearance of pathological behavior of this form. It was noted in \cite{chandrashekar2013kinetic,powers2015physical} that the carbuncle effect can be mitigated by introducing additional diffusion in the numerical scheme. In particular, we observed that the appearance of the carbuncle artifact is weakened (see Section \ref{sec:sm_2DRiemann_conf3}) by using a Rusanov-type dissipation operator of the form $\mathbf{\Lambda}_{i+\frac{1}{2}} = (|u| + a)_{i+\frac{1}{2}} \mathbf{I}$ in \eqref{high_order_ent_stable_flux}.

Empirical results in literature \cite{chandrashekar2013kinetic} suggest that the carbuncle typically arises in numerical schemes that resolve contact discontinuities well. This merits further investigation to assess the properties of SP-WENO-type reconstruction schemes in this context. However, this lies beyond the scope of the present work and will be explored in the future.\\

\begin{figure}
    \centering
    \subfigure[ENO3]{\includegraphics[width=2.5in]{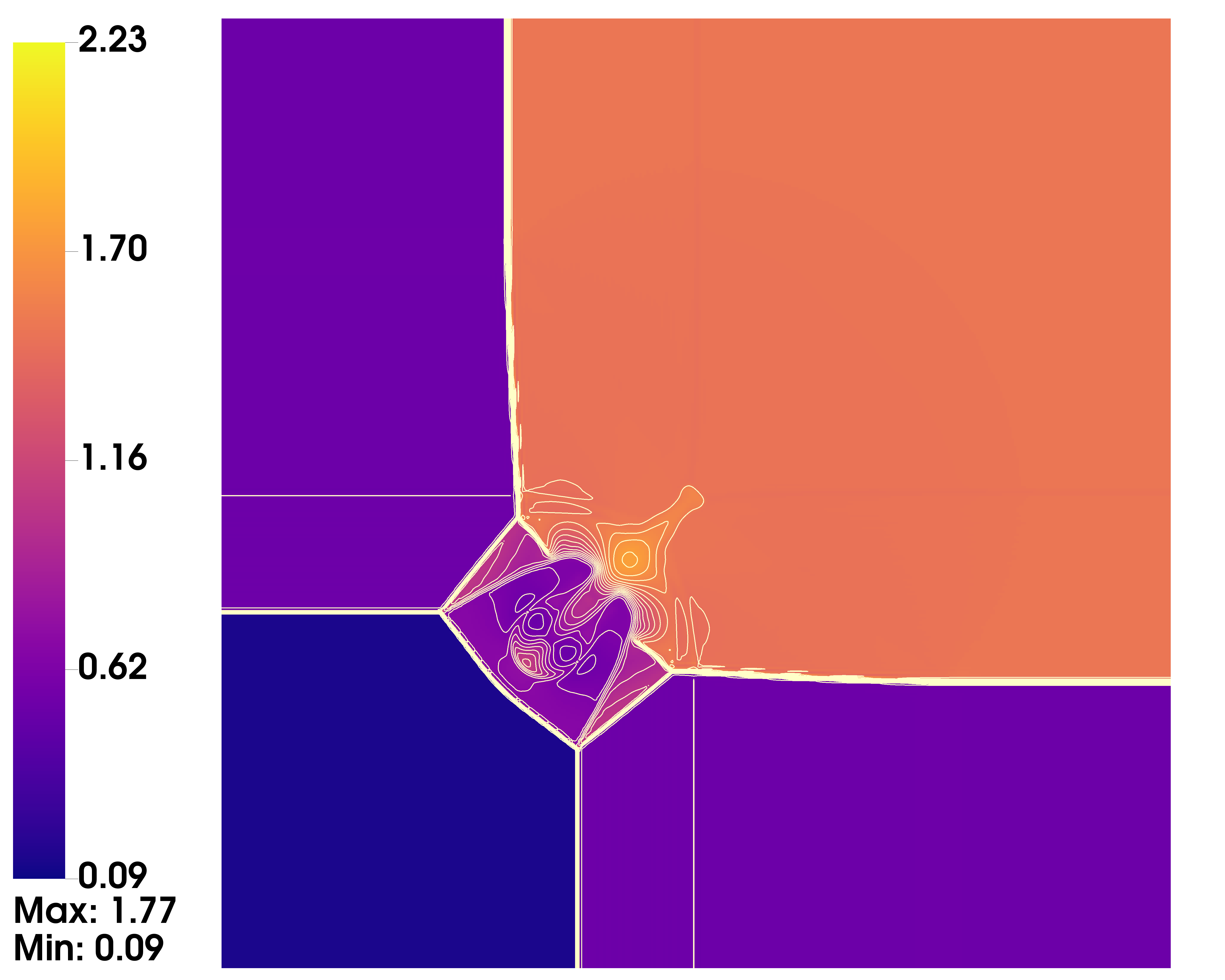}}
    \subfigure[SP-WENO]{\includegraphics[width=2.5in]{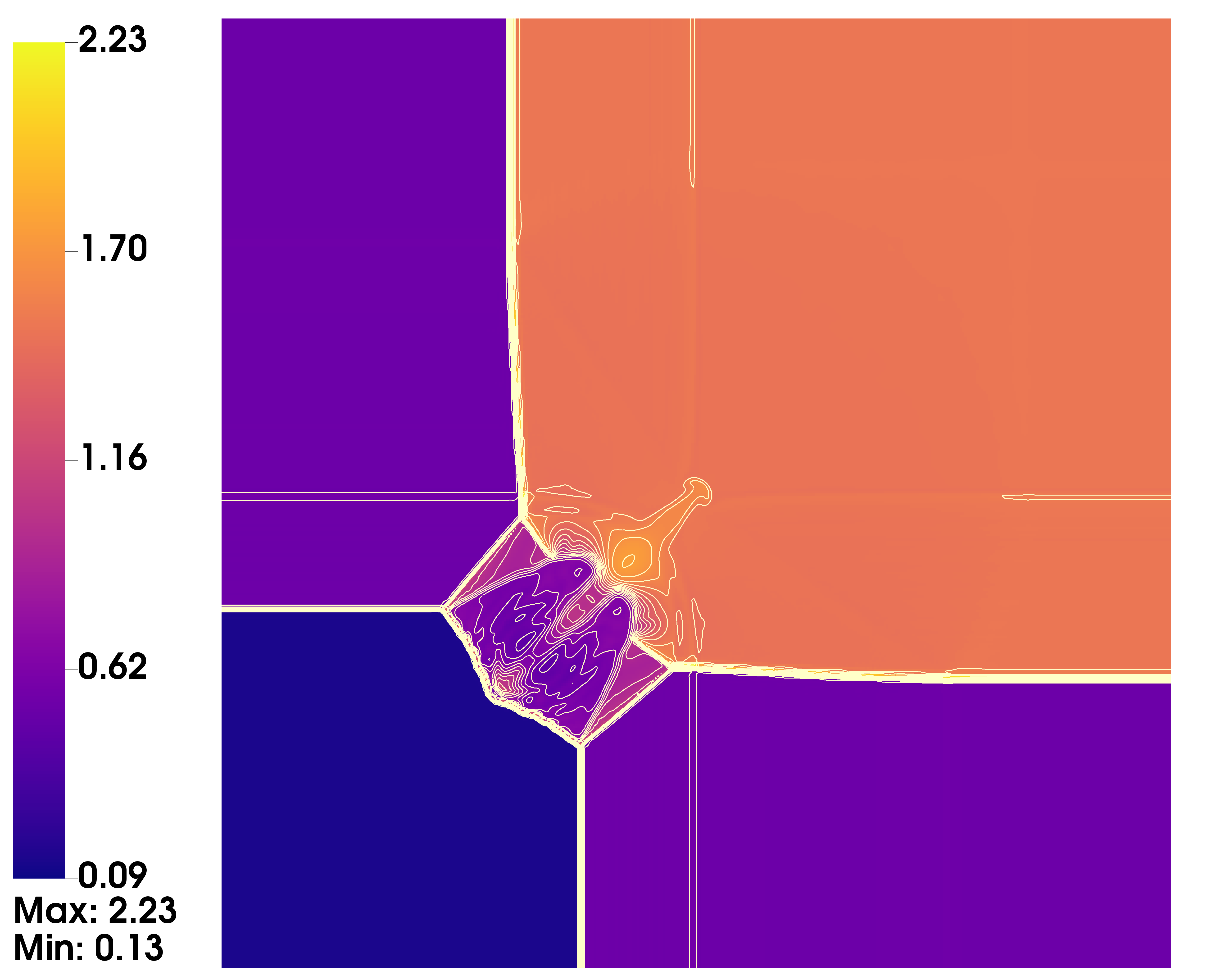}}
    \subfigure[SP-WENOc]{\includegraphics[width=2.5in]{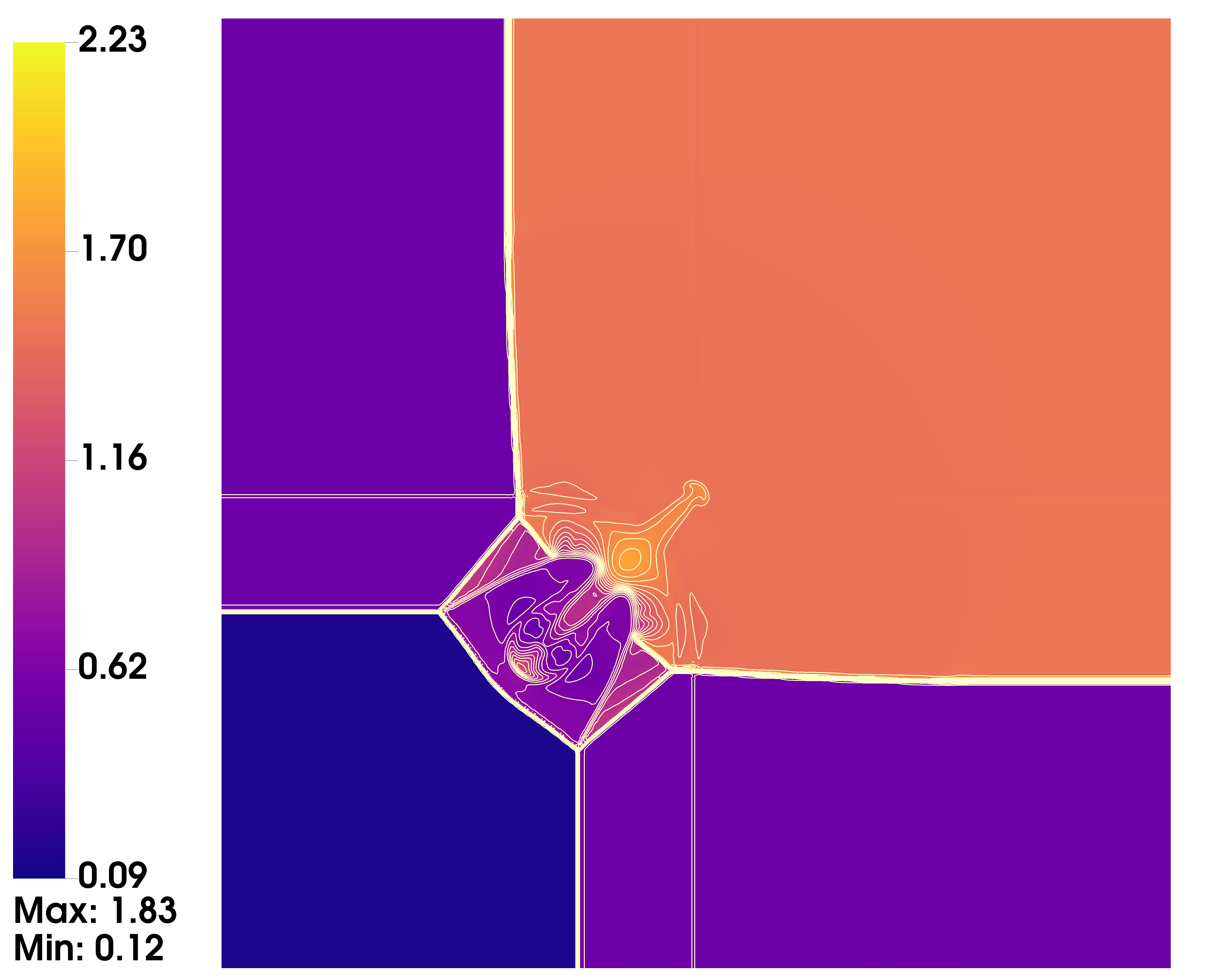}}
    \subfigure[\net]{\includegraphics[width=2.5in]{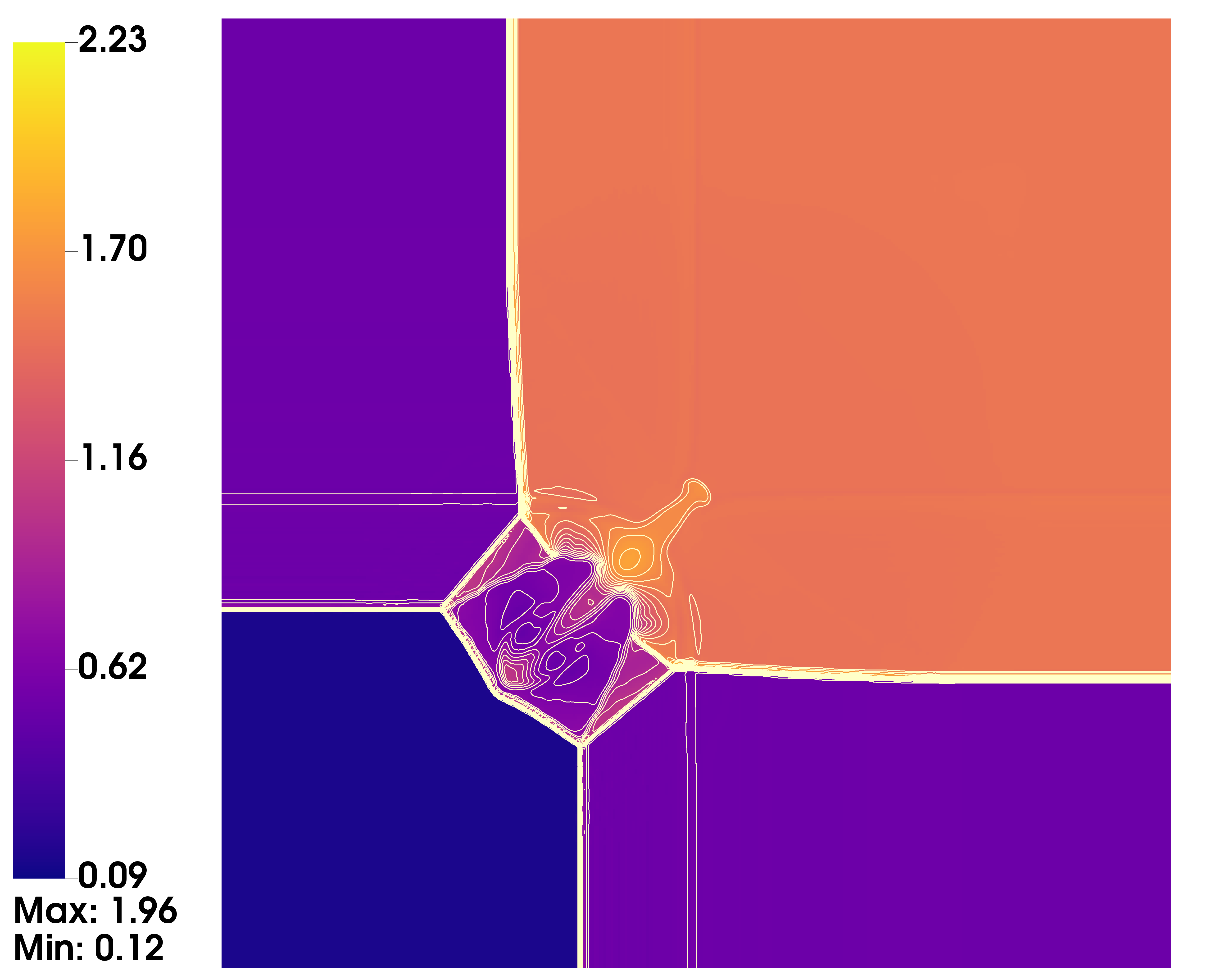}}
    \caption{2D Riemann problem (conf. 3): Density profiles at time $T=0.3$ with 30 contour lines between 0.1 and 2.28. Comparison of different reconstruction methods.}
    \label{fig:2D_riemann3}
\end{figure}


\textbf{Kelvin-Helmholtz Instability:} Finally, we consider a more complex two-dimensional problem for the Euler equations. This is the well-known Kelvin-Helmholtz instability which describes a shear flow separating two fluid regions with differing density, leading to two-dimensional turbulence. The initial conditions \cite{san2015evaluation} on the domain $[-0.5,0.5]\times[-0.5,0.5]$ are given by
 \begin{flalign*}
(\rho,\vel_1,\vel_2,p) = & 
(1,0.5,\epsilon,2.5) \chf{\{y>0.25\}}(x,y) +
(2,-0.5,\epsilon,2.5) \chf{\{|y|\leq 0.25\}}(x,y) \\
&+ (1,0.5,\epsilon,2.5) \chf{\{y< -0.25\}}(x,y)
\end{flalign*}
with periodic boundary conditions. When $\epsilon = 0$, the initial conditions describe a stationary solution of the PDE system. However, when a small perturbation is introduced in the initial condition, which in our case is done by setting the vertical velocity to be $\epsilon = 0.01 \sin(2 \pi x)$, the shearing leads to an instability generating small-scale vortical structures at the interfaces.

 We solve this problem using \net in the TeCNO scheme on meshes consisting of 256 $\times$ 256 cells and 512 $\times$ 512 cells until a final time $T=3$ with CFL = 0.4. For this problem, there is no convergence with mesh refinement \cite{fjordholm2017construction}, with finer structures appearing as the mesh is successively refined, which can be seen in Figure \ref{fig:kelvin-helmholtz}. Simulation with ENO3, SP-WENO and SP-WENOc on the same meshes show a similar behavior (see Section \ref{sec:sm_kelvin-helmholtz}), although the solutions with the different reconstructions on the same mesh look very different. We note that while the mesh convergence for a single deterministic sample does not exist, the convergence of the statistics of random samples (with randomized perturbations) does exist within the framework of measure-valued solutions \cite{fjordholm2017construction} or statistical solutions \cite{fjordholm2020statistical}.

\begin{figure}
    \centering
    \subfigure[$256 \times 256$ mesh]{\includegraphics[width=2.5in]{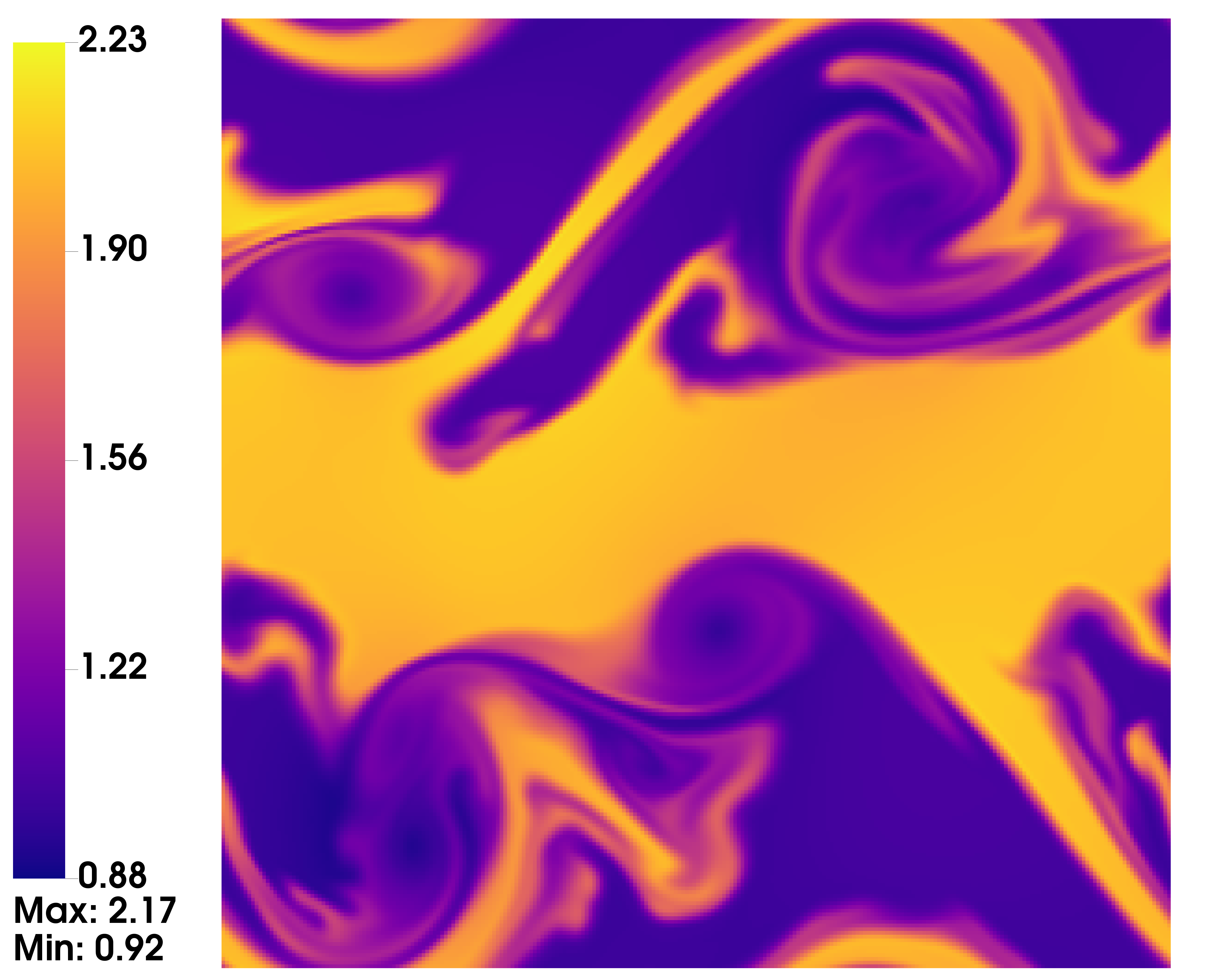}}
    \subfigure[$512 \times 512$ mesh]{\includegraphics[width=2.5in]{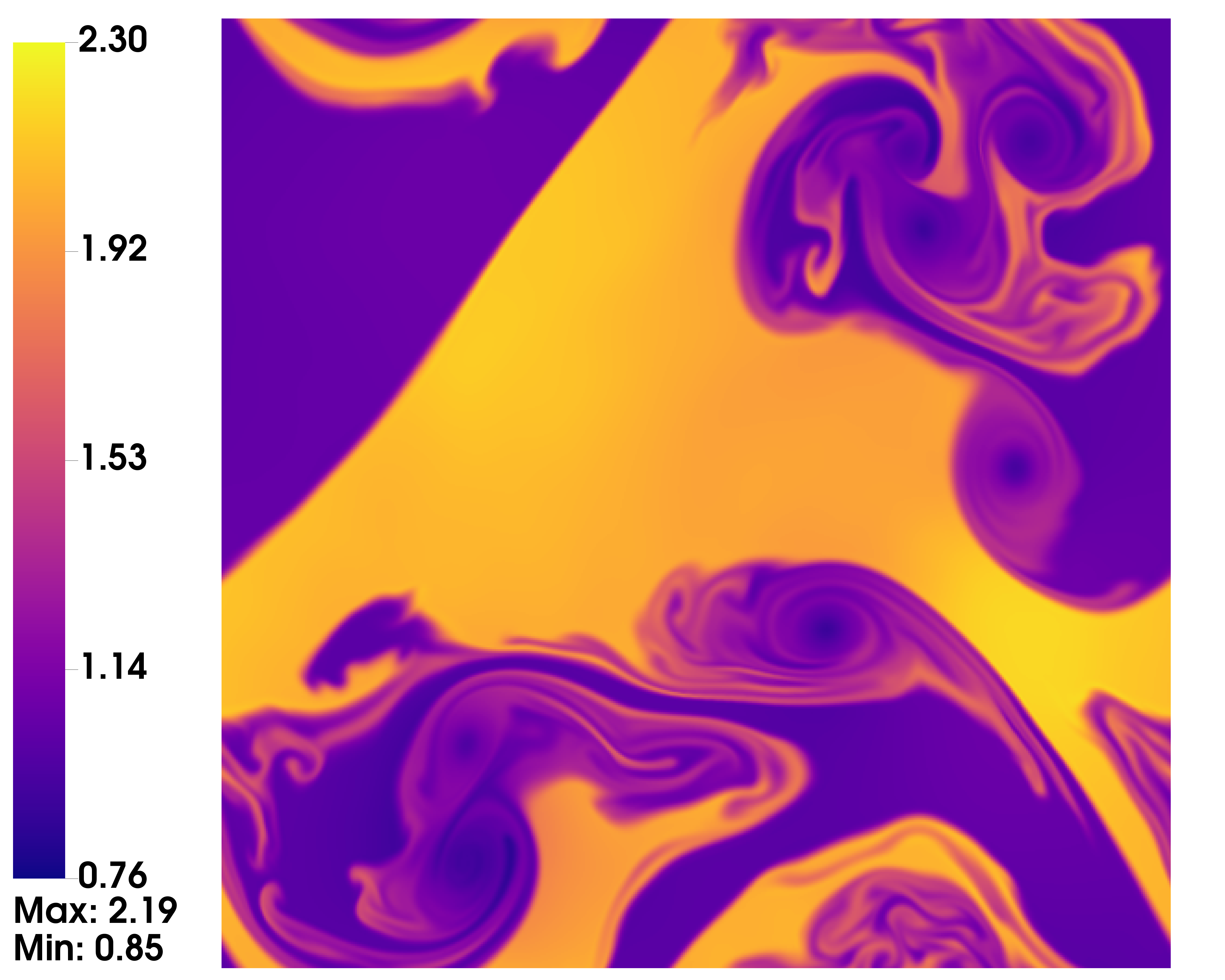}}
    \caption{Kelvin-Helmholtz Instability: Density profiles with \net at time $T=3$ on different mesh sizes.}
    \label{fig:kelvin-helmholtz}
\end{figure}

\section{Conclusion}\label{sec:conclusion}
In this work, we designed a novel neural network-based third-order WENO scheme called \net, which is guaranteed to satisfy the sign property. Thus, \net can be used within the TeCNO framework to obtain high-order entropy stable finite difference schemes. The motivation behind the proposed approach was two-fold: i) overcome the linear instability issues faced by ENO reconstruction (see Tables \ref{tab:lin_adv} and \ref{tab:isent_vortex}), and ii) have better shock-capturing capabilities compared to existing WENO algorithms, i.e., SP-WENO and SP-WENOc, that satisfy the sign property.

A key element in the proposed strategy was to decouple the constraints guaranteeing the sign property and third-order accuracy (in smooth regions) from the learning process. A strong imposition of these constraints led to a convex polygonal feasible region from which the WENO weights need to be selected. Then a network was trained to adaptively choose the weights from the feasible region to ensure good reconstruction properties near discontinuities. In contrast, we could impose these constraints weakly by adding a penalty term to the loss functions (analogous to physics informed neural networks \cite{raissi2019}). However, this would lead to the following challenges:
\begin{itemize}
    \item The weak imposition would not guarantee the satisfaction of the sign property in all situations, thus making it impossible to prove entropy stability in the TeCNO framework. 
    \item Neural networks trained on data extracted at a particular mesh resolution are rarely capable of demonstrating mesh convergence when tested on data from finer grids. In fact, the training (and test) errors typically plateau after a certain number of epochs, with the error values being several orders of magnitude larger than machine epsilon.
\end{itemize}     
Thus, there are major benefits of decoupling such constraints from the learning process and imposing them strongly.

The data used to train \net did not require solving conservation laws. Instead the data was generated from a library of functions with varying smoothness, which mimic the local structures typically arising in solutions to conservation laws. Thus, the cost of generating training data is insignificant, and the trained \net is model agnostic, i.e., does not depend on a specific conservation law. In other words, the \net needs to be trained once offline and can then be used for any conservation law. This strategy was based on similar ideas first considered in \cite{ray2018artificial} for designing troubled-cell detectors.

When comparing the numerical solutions obtained using various reconstruction methods satisfying the sign property, \net achieved third-order accuracy in smooth regions, while being more dissipative compared to SP-WENO and SP-WENOc. This was attributed to the fact that the reconstructed jump is mostly zero (or small) with SP-WENO and SP-WENOc. However, this had a negative impact near discontinuities where the solutions exhibited large spurious oscillations due to insufficient diffusion. \net significantly mitigated these spurious oscillations without compromising the order of accuracy in smooth regions. Further, \net did not suffer from linear instabilities faced by ENO3. It is important to note that the ENO3 stencil (corresponding to an interface) comprises six cells, while third-order SP-WENO and its variants (including \net) have access to the solution on a stencil with just four cells. 

The present work demonstrates that it is possible to use deep learning tools to learn adaptive reconstruction algorithms constrained to satisfy critical physical properties, such as the sign property. Further, it provides a framework to extend \net to high-order ($>3$) accurate reconstructions satisfying the sign property. This would involve formulating the sign property constraint on a larger stencil and transforming the constraints into corresponding convex polyhedral regions in high dimensional spaces for the WENO weights. Thus, instead of attempting to construct an explicit weight selection strategy by hand (which presents significant challenges in high dimensions), a neural network can learn the selection procedure from training data. Similar sign-preserving WENO-type reconstructions can also be designed on unstructured grids. These extensions will be explored in future work.

Finally, we recognize that the proposed \net has drawbacks. For one thing, we observe a carbuncle-like phenomenon in the 2D Riemann problem (configuration 3), which also appears with SP-WENO but surprisingly not SP-WENOc. We believe that this behavior is correlated by the ability of the schemes to resolve contact waves, which has also been numerically observed in the literature. Furthermore, since the improved performance of \net comes at the expense of additional computational cost, exploring hybridization techniques similar to \cite{chertock2022adaptive,clain2011high,farmakis2020weno} may allow us to dynamically switch between the reconstruction methods SP-WENO and \net so as to enjoy better performance in smooth regions while retaining the improved shock-capturing ability of \net.

\section*{Acknowledgements} This research did not receive any specific grant from funding agencies in the public, commercial, or not-for-profit sectors.

\printbibliography

\clearpage
\title{Supplementary Materials: Learning WENO for entropy stable schemes to solve conservation laws}
\emptythanks
\date{}

\setcounter{page}{1}
\setcounter{table}{0}
\setcounter{figure}{0}
\setcounter{equation}{0}
\setcounter{section}{0}
\setcounter{lem}{0}
\setcounter{thm}{0}
\setcounter{defn}{0}
\setcounter{remark}{0}

\let\thesectionWithoutSM\thesection 
\renewcommand\thesection{SM\thesectionWithoutSM}
\let\theequationWithoutSM\theequation 
\renewcommand\theequation{SM\theequationWithoutSM}
\let\thefigureWithoutSM\thefigure 
\renewcommand\thefigure{SM\thefigureWithoutSM}
\let\thetableWithoutSM\thetable 
\renewcommand\thetable{SM\thetableWithoutSM}
\let\thelemWithoutSM\thelem 
\renewcommand\thelem{SM\thelemWithoutSM}
\let\theremarkWithoutSM\theremark 
\renewcommand\theremark{SM\theremarkWithoutSM}
\let\thealgorithmWithoutSM\thealgorithm 
\renewcommand\thealgorithm{SM\thealgorithmWithoutSM}

\maketitle

\section{SP-WENO Cases}\label{sec:theta_cases}

In Table \ref{tab:feasible}, we list the various cases of interest along with the corresponding sign property and accuracy constraints for third-order SP-WENO formulations. Note that the various $\Omega_\Theta$ listed in the table (column 2) form a disjoint partition of $\Ro^2$. Further, $\psi^+_\iph$ is used to describe $\Omega_\Theta$ in cases 2 and 3, which is a function of $\theta^+_i,\theta^-_{i+1}$ by virtue of \eqref{eqn:psi_pm} in the main text. We reintroduce the following notation from \cite{fjordholm2016sign} (with the $i+\frac{1}{2}$ subscript dropped from $\psi^+,\psi^-$ terms)
\begin{flalign}\label{eqn:L}
\mathcal{L} := \begin{cases}
    \frac{C_1}{\frac{1}{8}(1+\psi^+)} + \frac{C_2}{\frac{1}{8}(1+\psi^-)}, &\text{ if } \psi^+ \neq -1 \\
    C_1 - C_2 + 1, &\text{ if } \psi^+=\psi^-=-1
\end{cases},
\end{flalign}
which is also used to define the sign property constraints in Table \ref{tab:feasible}. The expression of $\mathcal{L}$ can be obtained from the constraint \eqref{eqn:sign_prop_constraint} (in the main text) through simple algebraic manipulations. For full details, we refer interested readers to \cite{fjordholm2016sign}.

The SP-WENO and SP-WENOc formulations satisfy the constraints of each scenario. The DL-based \net formulation is also constructed to satisfy the feasibility constraints in each of these scenarios. Note that these constraints are strongly embedded into \net by selecting appropriate vertices of the convex feasible region.

\begin{remark}
    The consistency constraint $-\frac{3}{8}\leq C_1,C_2\leq\frac{1}{8}$ must always be satisfied. Thus, in cases where there are no constraints due to the sign property or third-order accuracy, consistency will be the only constraint governing the feasible region $\Omega_C$.
\end{remark}

\renewcommand{\arraystretch}{1.75}
\begin{table}[!htbp]
\centering
\begin{tabular}{|c|c|l|l|l|}
\hline
\textbf{Case} & $\bm {\Omega_\Theta}$                                                                                       & {\textbf{Sign Prop. Const.}}                                                                                          & {\textbf{Acc. Const.}}    & \textbf{Remarks}       \\ \hline
1             & $\theta_i^+,\theta_{i+1}^->1$                                                                              & \multicolumn{1}{c|}{$C_1 = C_2 = \frac{1}{8}$}                                                                                    & \multicolumn{1}{c|}{None}                            & \multicolumn{1}{c|}{$\tilde{w}_0 = w_1 = 0$} \\ \hline
2a            & \begin{tabular}[c]{@{}c@{}}$\theta_i^+<1,\theta_{i+1}^->1$\\ $-1\leq\psi_{i+\frac{1}{2}}^+<0$\end{tabular} & \multicolumn{1}{c|}{\begin{tabular}[c]{@{}c@{}}$\mathcal{L} \leq 1$\end{tabular}} & \multicolumn{1}{c|}{$C_1 , C_2 \sim \mathcal{O}(h)$} & \multicolumn{1}{c|}{C-region}                        \\ \hline
2b            & \begin{tabular}[c]{@{}c@{}}$\theta_i^+<1,\theta_{i+1}^->1$\\ $\psi_{i+\frac{1}{2}}^+<-1$\end{tabular} & \multicolumn{1}{c|}{\begin{tabular}[c]{@{}c@{}}$\mathcal{L} \geq 1$\end{tabular}} & \multicolumn{1}{c|}{$C_1 , C_2 \sim \mathcal{O}(h)$} & \multicolumn{1}{c|}{C-region}                        \\ \hline
3a            & \begin{tabular}[c]{@{}c@{}}$\theta_i^+>1,\theta_{i+1}^-<1$\\ $-1\leq\psi_{i+\frac{1}{2}}^+<0$\end{tabular} & \multicolumn{1}{c|}{\begin{tabular}[c]{@{}c@{}}$\mathcal{L} \geq 1$\end{tabular}} & \multicolumn{1}{c|}{$C_1 , C_2 \sim \mathcal{O}(h)$} & \multicolumn{1}{c|}{C-region}                        \\ \hline
3b            & \begin{tabular}[c]{@{}c@{}}$\theta_i^+>1,\theta_{i+1}^-<1$\\ $\psi_{i+\frac{1}{2}}^+<-1$\end{tabular} & \multicolumn{1}{c|}{\begin{tabular}[c]{@{}c@{}}$\mathcal{L} \leq 1$\end{tabular}} & \multicolumn{1}{c|}{$C_1 , C_2 \sim \mathcal{O}(h)$} & \multicolumn{1}{c|}{C-region}                        \\ \hline
4a            & \begin{tabular}[c]{@{}c@{}}$\theta_{i+1}^-=1,\theta_i^+>1$\end{tabular} & \multicolumn{1}{c|}{\begin{tabular}[c]{@{}c@{}}$C_1=\frac{1}{8}$\end{tabular}} & \multicolumn{1}{c|}{None} & \multicolumn{1}{c|}{$w_1 = 0$}                        \\ \hline
4b            & \begin{tabular}[c]{@{}c@{}}$\theta_{i+1}^-=1,\theta_i^+\leq1$\end{tabular} & \multicolumn{1}{c|}{None} & \multicolumn{1}{c|}{None} & \multicolumn{1}{c|}{}                        \\ \hline
5a            & \begin{tabular}[c]{@{}c@{}}$\theta_i^+=1,\theta_{i+1}^->1$\end{tabular} & \multicolumn{1}{c|}{\begin{tabular}[c]{@{}c@{}}$C_2=\frac{1}{8}$\end{tabular}} & \multicolumn{1}{c|}{None} & \multicolumn{1}{c|}{$\tilde{w}_0 = 0$}                        \\ \hline
5b            & \begin{tabular}[c]{@{}c@{}}$\theta_i^+=1,\theta_{i+1}^-\leq1$\end{tabular} & \multicolumn{1}{c|}{None} & \multicolumn{1}{c|}{None} & \multicolumn{1}{c|}{}                        \\ \hline
6            & \begin{tabular}[c]{@{}c@{}}$\theta_i^+,\theta_{i+1}^-<1$\end{tabular} & \multicolumn{1}{c|}{None} & \multicolumn{1}{c|}{None} & \multicolumn{1}{c|}{}                        \\ \hline
\end{tabular}
\caption{Summary of the possible $\Omega_\Theta$ along with the sign property and accuracy constraints defining the associated feasible region $\Omega_C$. Here $\mathcal{L}$ is given by \eqref{eqn:L}.}
\label{tab:feasible}
\end{table}
\renewcommand{\arraystretch}{1}

\section{Description of the Vertex Selection Algorithm}\label{sec:SM_vertex}
We provide a complete and in-depth description of how the vertices of the convex $\Omega_C$ are selected. The pseudocode for the algorithm can be found in Algorithm \ref{alg:vertex_selection}. Note that the vertex algorithm is applied for each four-cell stencil in the mesh. Further, the number of vertices needed to construct convex polygons described below across all the possible scenarios is at most five. Thus, to ensure that the neural network always outputs the same number of convex weights $\bm{\alpha}$ corresponding to the vertices, we constrain the output dimension of the network to be five. In the various scenarios where the number of vertices of $\Omega_C$ is less than five, we augment the set of vertices to a set of five vertices by including either redundant vertices or some averaged state of the figure's vertices.

With the exception of cases 2 and 3 (see Table \ref{tab:feasible}), the vertex selection for most scenarios is quite straight-forward. This is due to the fact that all cases besides 2 and 3 do not have an additional accuracy constraint. In particular, for
\begin{itemize}
    \item \textbf{Case 1:} The feasible region $\Omega_C$ reduces to a single node $\left(\frac{1}{8},\frac{1}{8}\right)$. Thus, all five vertices are taken as this node.
    \item \textbf{Case 4a:} $\Omega_C$ reduces the line segment $C_1 = \frac{1}{8}, -\frac{3}{8} \leq C_2  \leq \frac{1}{8}$. Thus we take two vertices to be the right end $\left(\frac{1}{8},\frac{1}{8}\right)$, two vertices to be the left end $\left(-\frac{3}{8},\frac{1}{8}\right)$, and the final vertex to be the line segment's mid-point $\left(-\frac{1}{8},\frac{1}{8}\right)$.
    \item \textbf{Case 5a:} Is identical to Case 4a, but with the role of $C_1$ and $C_2$ interchanged.
    \item \textbf{Case 4b, 5b and 6:} There are not constraints (except for consistency) and $\Omega_C = \left[ -\frac{3}{8},\frac{1}{8}\right] \times \left[ -\frac{3}{8},\frac{1}{8}\right]$. Thus four vertices are chosen as the four corners of this square region, while the fifth vertex is taken to be the centroid of this square. 
\end{itemize}
Cases 2 and 3 are more involved since we require that $C_1,C_2=\mathcal{O}(h)$ in smooth regions. To address this, we construct an $\mathcal{O}(h)$ region in $\mathbb{R}^2$ as depicted in Figure \ref{fig:case_2_3_feasible_region}. The black dashed lines enclose a box representing the consistency requirements for the perturbations, i.e., $-3/8 \leq C_1,C_2 \leq 1/8$. The $\mathcal{L}=1$ line shown in orange separates the regions satisfying the sign property for Cases 2 and 3, represented as solid dark and light grey regions respectively. Within these grey regions, we construct smaller sub-regions that also satisfy the accuracy constraints. These sub-regions are shown in blue and green, and form the required feasible regions $\Omega_C$ for cases 2 and 3. To construct these sub-regions, we first define the box $\mathcal{H}=[\gamma_2,\gamma_1]\times[\gamma_2,\gamma_1] \subset [-3/8,1/8]\times [-3/8,1/8]$ where 
\begin{equation}\label{eqn:gammas}
\begin{aligned}
 \gamma_1 &= \min{\left( \max{\left(|\Delta z_{i-\frac{1}{2}}^*|,|\Delta z_{i+\frac{1}{2}}^*|,|\Delta z_{i+\frac{3}{2}}^*|\right)},\frac{1}{8}\right)},\\
 \gamma_2 &= -\min{\left( \max{\left(|\Delta z_{i-\frac{1}{2}}^*|,|\Delta z_{i+\frac{1}{2}}^*|,|\Delta z_{i+\frac{3}{2}}^*|\right)},\frac{3}{8}\right)}.   
\end{aligned}
\end{equation}
The region $\mathcal{H}$ is shown as a red dotted box in Figure \ref{fig:case_2_3_feasible_region}. Note that when the $z$ on the local stencil is smooth, $|\Delta z_{i-\frac{1}{2}}^*|,|\Delta z_{i+\frac{1}{2}}^*|,|\Delta z_{i+\frac{3}{2}}^*|=\mathcal{O}(h)$. Thus, we ensure that $\gamma_1,\gamma_2=\mathcal{O}(h)$, and searching for $(C_1,C_2)$ inside $\mathcal{H}$ would constrain $C_1,C_2=\mathcal{O}(h)$, which is the order constraint for cases 2 and 3 assuming smoothness for $z$. When a discontinuity is present, $\mathcal{H}$ can potentially expand to the whole consistency region for $C_1,C_2$. With the line $\mathcal{L}=1$ splitting $\mathcal{H}$, we have that the feasible region for Case 2 consists of the region above $\mathcal{L}=1$ and that of Case 3 consists of the region below $\mathcal{L}=1$. We further note that the closer the perturbations are to zero, the more accurate the reconstruction. In smooth regions, this is desirable, but near discontinuities, capturing shocks becomes the priority, so searching a wider search space becomes important.

As shown in Figure \ref{fig:case_2_3_feasible_region}, the line $\mathcal{L}=1$ splits $\mathcal{H}$ into a triangular and pentagonal $\Omega_C$ region, with two out of three and two out of five vertices lying on the line $\mathcal{L}=1$, respectively. A peculiarity of choosing $(C_1,C_2)$ on this line is that it ensures the left and right reconstructed states at the interface are equal, i.e, $\llbracket z\rrbracket_{i+\frac{1}{2}}=0$. As discussed in Section \ref{sec:sp_wenoc} in the main text, a zero reconstruction jump is undesirable in the vicinity of a discontinuity in the TeCNO framework. When choosing $(C_1,C_2)$ from a triangular region, two of whose vertices are on this line, which biases the selection to be on this line (this is what we also observed in practice). Thus, when constructing a triangular $\Omega_C$ above or below $\mathcal{L}=1$, we do not use the triangle that is a part of $\mathcal{H}$. Instead, we replace one of the vertices lying on $\mathcal{L}=1$ with another vertex pulled away from this line (without violating the consistency and sign property constraints) to form a flipped triangular $\Omega_C$ that is outside of $\mathcal{H}$. This flipped $\Omega_C$ still satisfies $C_1,C_2=\mathcal{O}(h)$, and thus any point selected in this triangle will also satisfy this constraint. For instance, let us consider Case 2 when $\psi^+<-1$, where $\Omega_C$ is triangular (see Figure \ref{fig:case_2_3_feasible_region}(c),(d)). In this scenario, according to Algorithm \ref{alg:vertex_selection}, the vertex outside $\mathcal{H}$ is $(\gamma_2,\frac{1}{8}(1+\psi^-)-\gamma_1\psi^-)$. For smooth regions, $C_1=\gamma_2=\mathcal{O}(h)$. Further, we can deduce that $(1+\psi^-)=\mathcal{O}(h)$. Since $\gamma_1=\mathcal{O}(h)$, it is clear that $C_2=\frac{1}{8}(1+\psi^-)-\gamma_1\psi^-=\mathcal{O}(h)$. Similar arguments can be made for Case 3 when $-1<\psi^+<0$ where $\Omega_C$ is once again triangular (see Figure \ref{fig:case_2_3_feasible_region}(a),(b)). To ensure that we are always working with five vertices, when the feasible region is a triangle, we augment the set of three vertices with the centroid of the triangle twice.

\begin{remark}
    When constructing the triangular $\Omega_C$ for Cases 2 and 3, we do not discard all vertices lying on $\mathcal{L}=1$. This is because while $\llbracket z\rrbracket_{i+\frac{1}{2}}=0$ is not desirable near discontinuities, it does lead to better accuracy near smooth regions. Thus, we retain at least one of these vertices to maintain a balance in performance between smooth and discontinuous reconstructions. 
\end{remark}

\begin{remark}
    We do not modify the vertices of the pentagonal $\Omega_C$ obtained in Cases 2 and 3, as only two of the five vertices are on $\mathcal{L}=1$. Thus, the pentagonal regions do not suffer from the biasing issue faced by triangular $\Omega_C$.
\end{remark}

\begin{remark}
    The vertex modification algorithm for the triangular $\Omega_C$ is not unique. After experimenting with a few configurations of choosing the shifted vertex, we found the one used in Algorithm \ref{alg:vertex_selection} yields the best performance. 
\end{remark}

\begin{figure}
    \centering
    \subfigure[]{
    \includegraphics[width=2.5in]{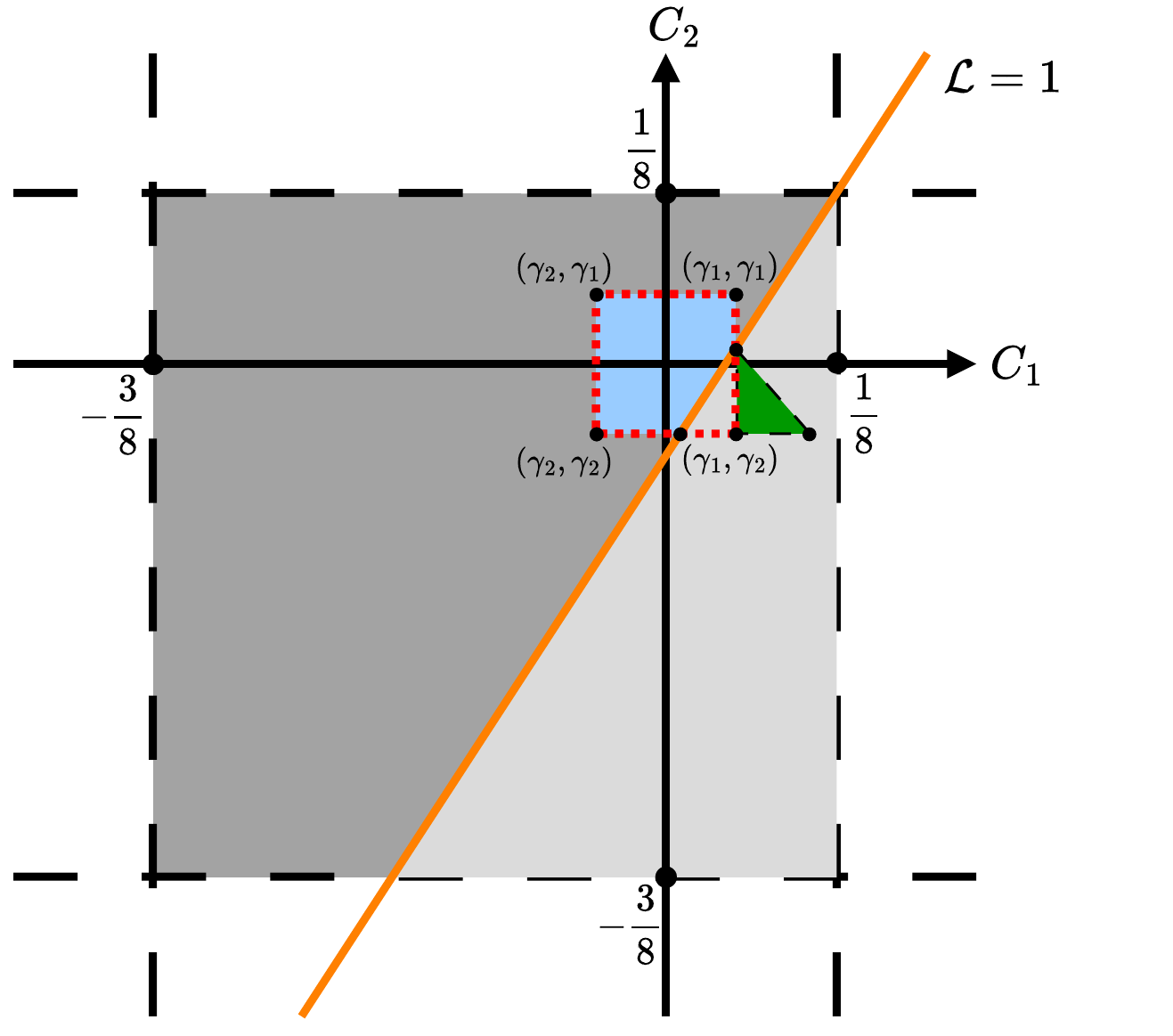}}
    \subfigure[]{\includegraphics[width=2in]{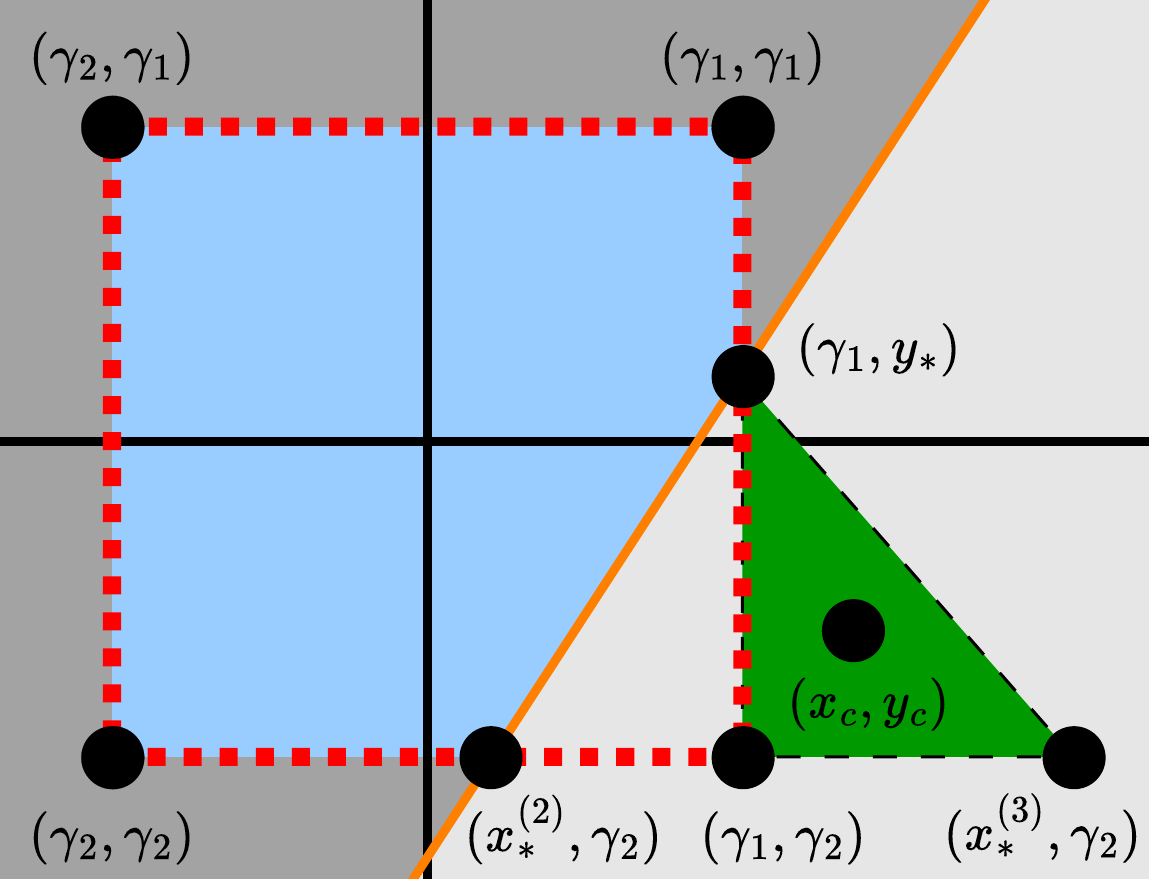}}
    \subfigure[]{\includegraphics[width=2.5in]{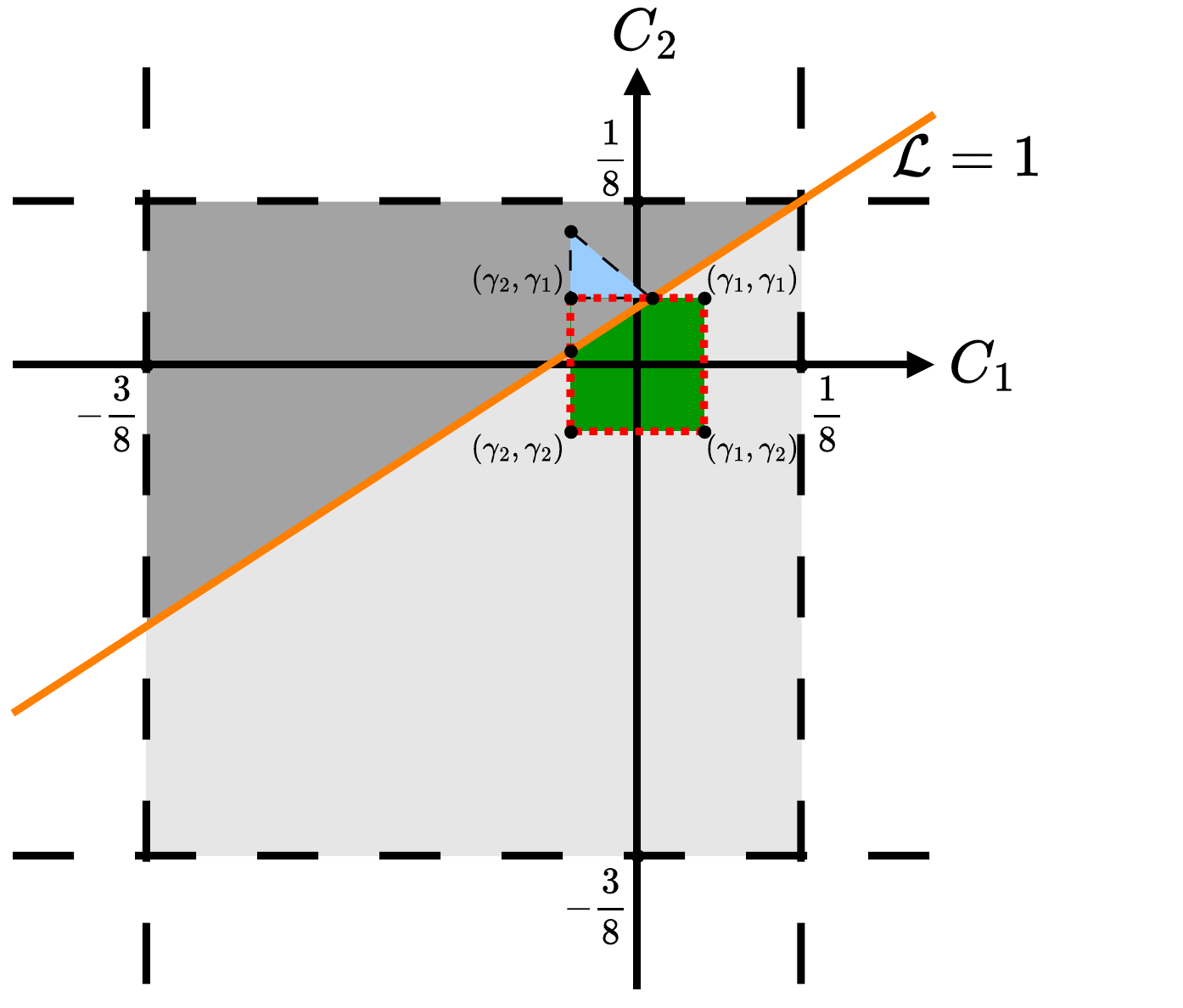}}
    \subfigure[]{\includegraphics[width=2in]{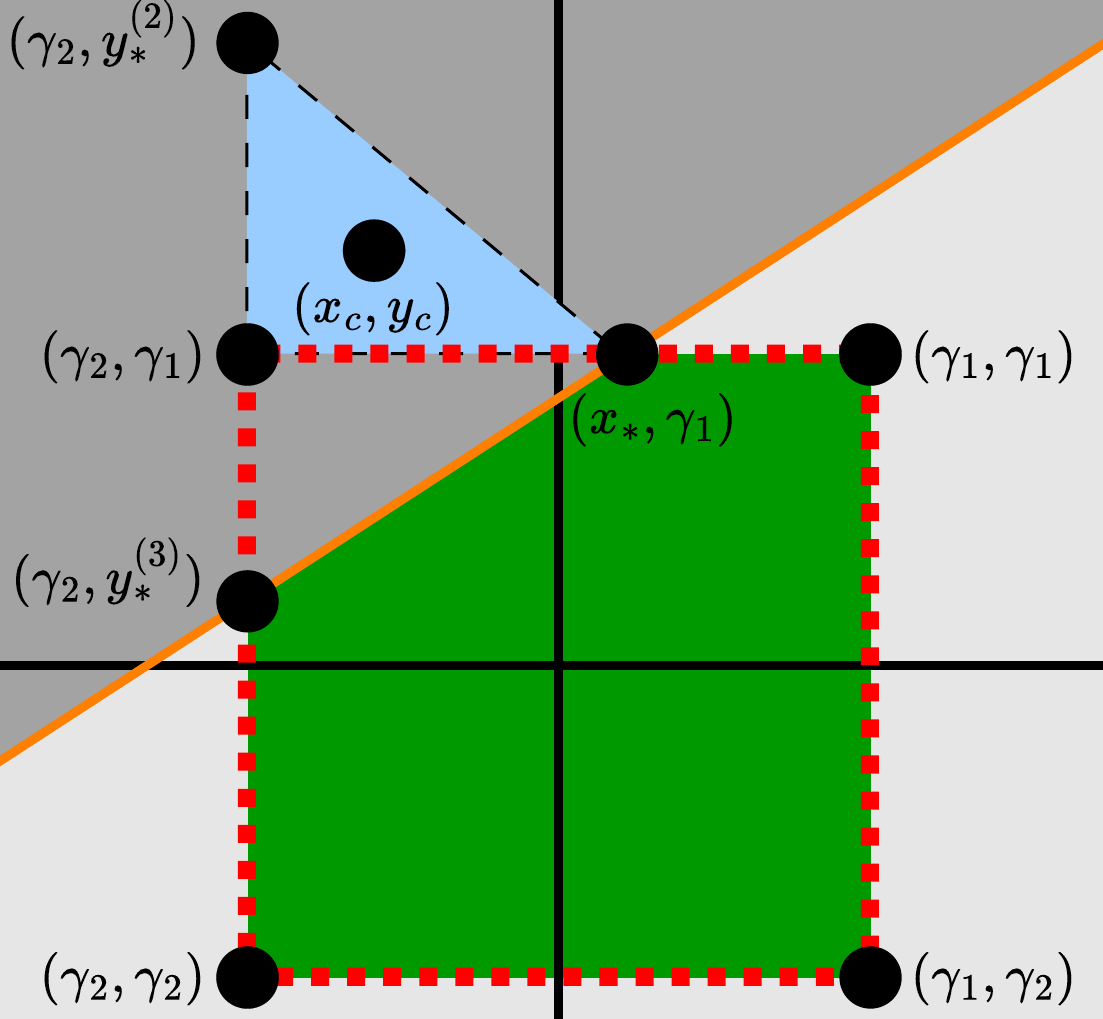}}
    \caption{Feasible region for Case 2 (\textit{dark grey}) and Case 3 (\textit{light grey}). The $\mathcal{O}(h)$ box $\mathcal{H}$ (outlined by red dots) is centered about the origin. The vertex selection algorithm further restricts the search space to the blue and green regions for Case 2 and Case 3, respectively. \textbf{(a)} $\psi^+\in(-1,0)$ with a zoomed in view in \textbf{(b)}, \textbf{(c)} $\psi^+\in(-\infty,-1)$ with a zoomed in view in \textbf{(d)}.}
    \label{fig:case_2_3_feasible_region}
\end{figure}

\begin{algorithm}
\small
\caption{Vertex Selection Algorithm}\label{alg:vertex_selection}
\textbf{Input:} Jump ratios $\theta_{i+1}^-$, $\theta_i^+$ and scaled absolute jumps $|\Delta z_{i-\frac{1}{2}}^*|$, $|\Delta z_{i+\frac{1}{2}}^*|$, $|\Delta z_{i+\frac{3}{2}}^*|$ \\
\textbf{Output:} Set of vertices $\bm{\nu} = \left[\bm{\nu}_1, \bm{\nu}_2,\bm{\nu}_3, \bm{\nu}_4, \bm{\nu}_5\right]$
\begin{algorithmic}[1]
\State Evaluate $\psi^\pm$ according to \eqref{eqn:psi_pm} in the main text, and $\gamma_1,\gamma_2$ according to \eqref{eqn:gammas}
\If{$\theta_{i+1}^-,\theta_{i}^+>1$} \tikzmark{top1} 
    \State $\bm{\nu} = \left[(\frac{1}{8},\frac{1}{8}),(\frac{1}{8},\frac{1}{8}),(\frac{1}{8},\frac{1}{8}),(\frac{1}{8},\frac{1}{8}),(\frac{1}{8},\frac{1}{8})\right]$\tikzmark{bottom1}
\ElsIf{$\theta_{i+1}^->1$, $\theta_{i}^+<1$}\tikzmark{top2} 
    \If{$\psi^+<-1$}
        \State $x_* = \frac{1}{8}(1+\psi^+) - \gamma_1\psi^+$
        \If{$x_*<\gamma_2$}
            \State $\hat{x} = \frac{(\psi^-)^2+\psi^-}{8((\psi^-)^2+1)}$, \hspace{2mm} $\hat{y} = \frac{1}{8}(1+\psi^-) - \hat{x}\psi^-$
            \State $\bm{\nu} = \left[(\hat{x},\hat{y}),(\hat{x},\hat{y}),(\hat{x},\hat{y}),(\hat{x},\hat{y}),(\hat{x},\hat{y})\right]$
        \Else
            \State $y_*^{(2)} = \frac{1}{8}(1+\psi^-) - \gamma_1\psi^-$, \hspace{2mm}$x_c = \frac{1}{3}(2\gamma_2+x_*)$, \hspace{2mm} $y_c = \frac{1}{3}(2\gamma_1+y_*^{(2)})$
            \State $\bm{\nu} = \left[ (\gamma_2,\gamma_1),(x_*,\gamma_1),(\gamma_2,y_*^{(2)}),(x_c,y_c),(x_c,y_c)\right]$
        \EndIf
    \Else
        \State $y_* = \frac{1}{8}(1+\psi^-) - \gamma_1\psi^-$, \hspace{2mm} $x_*^{(2)} = \frac{1}{8}(1+\psi^+) - \gamma_2\psi^+$
        \If{$y_*<\gamma_2$}
            \State $\bm{\nu} = \left[(\gamma_2,\gamma_1),(\gamma_1,\gamma_1),(\gamma_2,\gamma_2),(\gamma_1,\gamma_2),(0,0)\right]$
        \Else
            \State $\bm{\nu} = \left[(\gamma_2,\gamma_1),(\gamma_1,\gamma_1),(\gamma_2,\gamma_2),(\gamma_1,y_*),(x_*^{(2)},\gamma_2)\right]$
        \EndIf
    \EndIf\tikzmark{bottom2}
\ElsIf{$\theta_{i+1}^-<1$, $\theta_{i}^+>1$}\tikzmark{top3}
    \If{$\psi^+<-1$}
        \State $x_* = \frac{1}{8}(1+\psi^+) - \gamma_1\psi^+$, \hspace{2mm} $y_*^{(3)} = \frac{1}{8}(1+\psi^-) - \gamma_2\psi^-$
        \If{$x_*<\gamma_2$}
            \State $\bm{\nu} = \left[(\gamma_2,\gamma_1),(\gamma_1,\gamma_1),(\gamma_2,\gamma_2),(\gamma_1,\gamma_2),(0,0)\right]$
        \Else
            \State $\bm{\nu} = \left[(\gamma_1,\gamma_2),(\gamma_1,\gamma_1),(\gamma_2,\gamma_2),(x_*,\gamma_1),(\gamma_2,y_*^{(3)})\right]$
        \EndIf
    \Else
        \State $y_* = \frac{1}{8}(1+\psi^-) - \gamma_1\psi^-$
        \If{$y_*<\gamma_2$}
            \State $\hat{x} = \frac{(\psi^-)^2+\psi^-}{8((\psi^-)^2+1)}$, \hspace{2mm} $\hat{y} = \frac{1}{8}(1+\psi^-) - \hat{x}\psi^-$
            \State $\bm{\nu} = \left[(\hat{x},\hat{y}),(\hat{x},\hat{y}),(\hat{x},\hat{y}),(\hat{x},\hat{y}),(\hat{x},\hat{y})\right]$
        \Else
            \State $x_*^{(3)} = \frac{1}{8}(1+\psi^+) - \gamma_1\psi^+$, \hspace{2mm} $x_c = \frac{1}{3}(2\gamma_1+x_*^{(3)})$, \hspace{2mm} $y_c = \frac{1}{3}(2\gamma_2+y_*)$\tikzmark{right}
            \State $\bm{\nu} = \left[(\gamma_1,\gamma_2),(x_*^{(3)},\gamma_2),(\gamma_1,y_*),(x_c,y_c),(x_c,y_c)\right]$
        \EndIf
    \EndIf\tikzmark{bottom3}
\ElsIf{$\theta_{i+1}^-==1$, $\theta_{i}^+>1$}\tikzmark{top4}
    \State $\bm{\nu} = \left[(\frac{1}{8},-\frac{3}{8}),(\frac{1}{8},\frac{1}{8}),(\frac{1}{8},\frac{1}{8}),(\frac{1}{8},-\frac{3}{8}),(\frac{1}{8},-\frac{1}{8})\right]$\tikzmark{bottom4}
\ElsIf{$\theta_{i}^+==1$, $\theta_{i+1}^->1$}\tikzmark{top5}
    \State $\bm{\nu} = \left[(\frac{1}{8},\frac{1}{8}),(\frac{1}{8},\frac{1}{8}),(-\frac{3}{8},\frac{1}{8}),(-\frac{3}{8},\frac{1}{8}),(-\frac{1}{8},\frac{1}{8})\right]$\tikzmark{bottom5}
\Else\tikzmark{top6}
    \State $\bm{\nu} = \left[(\frac{1}{8},\frac{1}{8}),(\frac{1}{8},-\frac{3}{8}),(-\frac{3}{8},-\frac{3}{8}),(-\frac{3}{8},\frac{1}{8}),(-\frac{1}{8},-\frac{1}{8})\right]$\tikzmark{bottom6}
\EndIf
\end{algorithmic}
\AddNote{top1}{bottom1}{right}{Case 1}
\AddNote{top2}{bottom2}{right}{Case 2}
\AddNote{top3}{bottom3}{right}{Case 3}
\AddNote{top4}{bottom4}{right}{Case 4a}
\AddNote{top5}{bottom5}{right}{Case 5a}
\AddNote{top6}{bottom6}{right}{Cases 4b, 5b, 6}
\end{algorithm}

\section{Stability bound for \net}\label{sec:sm_stability}
\begin{lem}\label{lem:stability}
\textnormal{(Bounds on jumps)} The \net reconstructed jump satisfies the following estimate:
\begin{equation}\label{eqn:stability}
\left|\llbracket z\rrbracket_{i+\frac{1}{2}}\right| \leq \frac{1}{2}\left|\Delta z_{i-\frac{1}{2}}\right| + \left|\Delta z_{i+\frac{1}{2}}\right| + \frac{1}{2}\left|\Delta z_{i+\frac{3}{2}}\right| \hspace{3mm} \forall\hspace{1mm} i\in\mathbb{Z}.
\end{equation}
\end{lem}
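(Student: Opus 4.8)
The plan is to work directly from the closed-form expression for the reconstructed jump in \eqref{recon_jump}, which is valid for any consistent weight choice and hence in particular for \net. The starting point is
\[
\llbracket z\rrbracket_{i+\frac{1}{2}} = \frac{1}{2}\big[\Tilde{w}_0(1-\theta_{i+1}^-)+w_1(1-\theta_i^+)\big]\Delta z_{i+\frac{1}{2}},
\]
and the first step is to eliminate the jump ratios in favor of the neighboring undivided jumps. Recalling that $\theta_i^+ = \Delta z_{i-\frac{1}{2}}/\Delta z_{i+\frac{1}{2}}$ and $\theta_{i+1}^- = \Delta z_{i+\frac{3}{2}}/\Delta z_{i+\frac{1}{2}}$, clearing denominators gives the identities $\theta_i^+\,\Delta z_{i+\frac{1}{2}} = \Delta z_{i-\frac{1}{2}}$ and $\theta_{i+1}^-\,\Delta z_{i+\frac{1}{2}} = \Delta z_{i+\frac{3}{2}}$. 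Distributing the factor $\Delta z_{i+\frac{1}{2}}$ through the bracket and substituting these identities turns the product form into a plain linear combination of the three stencil jumps,
\[
\llbracket z\rrbracket_{i+\frac{1}{2}} = \frac{1}{2}\big[(\Tilde{w}_0+w_1)\,\Delta z_{i+\frac{1}{2}} - \Tilde{w}_0\,\Delta z_{i+\frac{3}{2}} - w_1\,\Delta z_{i-\frac{1}{2}}\big].
\]

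The second step is to take absolute values and apply the triangle inequality. Since \net inherits the consistency property of SP-WENO (enforced by confining $(C_1,C_2)$ to the feasible region via the vertex-selection construction), the weights satisfy $0\le w_1,\Tilde{w}_0\le 1$; in particular the three coefficients $\Tilde{w}_0+w_1$, $\Tilde{w}_0$, $w_1$ are all nonnegative, so they factor outside the absolute values without any sign complication, yielding
\[
\big|\llbracket z\rrbracket_{i+\frac{1}{2}}\big| \le \frac{1}{2}\big[(\Tilde{w}_0+w_1)\,|\Delta z_{i+\frac{1}{2}}| + \Tilde{w}_0\,|\Delta z_{i+\frac{3}{2}}| + w_1\,|\Delta z_{i-\frac{1}{2}}|\big].
\]
The final step is to bound each weight by $1$, so that $\Tilde{w}_0+w_1\le 2$, $\Tilde{w}_0\le 1$, and $w_1\le 1$. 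Substituting these bounds collapses the right-hand side precisely to $|\Delta z_{i+\frac{1}{2}}| + \tfrac{1}{2}|\Delta z_{i+\frac{3}{2}}| + \tfrac{1}{2}|\Delta z_{i-\frac{1}{2}}|$, which is the claimed estimate \eqref{eqn:stability}.

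It remains only to dispose of the degenerate case $\Delta z_{i+\frac{1}{2}}=0$, where $\theta_i^+$ and $\theta_{i+1}^-$ are undefined and \eqref{recon_jump} does not apply; there the algorithm sets $\llbracket z\rrbracket_{i+\frac{1}{2}}=0$ by convention, so the inequality holds trivially because its right-hand side is a sum of nonnegative terms. I do not anticipate a genuine obstacle in this argument: its entire content is the algebraic rewriting of the first step combined with the a priori consistency bounds on the weights, and no property of \net beyond consistency is needed (notably, the bound is insensitive to how the network selects $(C_1,C_2)$ within the feasible polygon). The only point meriting care is the sign bookkeeping when pulling the weights out of the absolute values, which is clean exactly because the WENO weights are nonnegative.
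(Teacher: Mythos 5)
Your proof is correct and is essentially the paper's own argument: the paper rewrites \eqref{recon_jump} via \eqref{eqn:spweno_wts} as $\left(C_1-\frac{1}{8}\right)\Delta z_{i-\frac{1}{2}} + \left(\frac{1}{4}-C_1-C_2\right)\Delta z_{i+\frac{1}{2}} + \left(C_2-\frac{1}{8}\right)\Delta z_{i+\frac{3}{2}}$, which is exactly your linear combination in the weights since $w_1/2 = \frac{1}{8}-C_1$ and $\Tilde{w}_0/2 = \frac{1}{8}-C_2$, and then applies the same triangle inequality with the consistency bounds, handling $\Delta z_{i+\frac{1}{2}}=0$ by the same convention. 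The only cosmetic difference is that you phrase the coefficients in terms of $(\Tilde{w}_0, w_1)$ rather than the perturbations $(C_1,C_2)$.
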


\begin{proof}
If $\Delta z_{i+\frac{1}{2}}=0$, the bound will clearly be satisfied since $|\llbracket z\rrbracket_{i+\frac{1}{2}}|=0$ in this case by construction of \net. Hence, we assume $\Delta z_{i+\frac{1}{2}}\neq0$. We start by
using \eqref{eqn:spweno_wts} to rewrite the expression of $\llbracket z\rrbracket_{i+\frac{1}{2}}$ in \eqref{recon_jump} as  
\begin{flalign*}
\llbracket z\rrbracket_{i+\frac{1}{2}} = \left(C_1-\frac{1}{8}\right)\Delta z_{i-\frac{1}{2}} + \left(\frac{1}{4}-C_1-C_2\right)\Delta z_{i+\frac{1}{2}} + \left(C_2-\frac{1}{8}\right)\Delta z_{i+\frac{3}{2}}.
\end{flalign*}
By consistency, we have $-3/8 \leq C_1,C_2 \leq 1/8$. Thus, the absolute value of the jump can be bounded as
\begin{flalign*}
\left|\llbracket z\rrbracket_{i+\frac{1}{2}}\right| &\leq \left|C_1-\frac{1}{8}\right|\left|\Delta z_{i-\frac{1}{2}}\right| + \left|\frac{1}{4}-C_1-C_2\right|\left|\Delta z_{i+\frac{1}{2}}\right| + \left|C_2-\frac{1}{8}\right|\left|\Delta z_{i+\frac{3}{2}}\right| \\
 &\leq \frac{1}{2}\left|\Delta z_{i-\frac{1}{2}}\right| + \left|\Delta z_{i+\frac{1}{2}}\right| + \frac{1}{2}\left|\Delta z_{i+\frac{3}{2}}\right|.
\end{flalign*}
\end{proof}

\begin{remark}
    The estimate \eqref{eqn:stability} is not unique to \net and is a consequence of the sign property constraint \eqref{eqn:sign_prop_constraint}. Thus, it is also satisfied by every variant of SP-WENO. However, the original SP-WENO satisfies a sharper bound which is attributed to the fact that the reconstructed jump is zero in most cases \cite{fjordholm2016sign}.
\end{remark}

\section{Additional Numerical Results and Details}

\subsection{Empirical study of reconstruction jumps}\label{sec:rec_jump}
We consider a continuous function with a sharp connection between two linear components to mimic a discontinuity:

\begin{flalign*}
u(x) = \begin{cases} \frac{1}{2}x &\text{ if } -0.5\leq x<0\\ \frac{x}{\epsilon} &\text{ if } 0\leq x\leq\epsilon \\ x-\epsilon+1 &\text{ if } \epsilon<x\leq0.5 \end{cases},
\end{flalign*}
where $\epsilon$ is chosen as 0.001. We reconstruct on a mesh consisting of 1000 cells, ensuring that a single cell is sitting inside the jump. In other words, we are simulating a scenario where the discontinuity is not perfectly resolved. Figure \ref{fig:recon_test} shows the left and right reconstructed interface values for ENO3, SP-WENO, SP-WENOc, and \net, zoomed into the region close to the simulated discontinuity. Figure \ref{fig:recon_test} also features bar plots that show the actual jump in cell center values and the reconstructed jump at the interface for each method. The left and right reconstructed values are almost identical outside of the $0\leq x\leq\epsilon$ region for all the methods, and thus not visible in the figure given the scale of the jump. However, inside the pseudo-shock where a cell (let us call it $I_0$) lies, SP-WENO and SP-WENOc do not fully capture the non-zero jumps. At the left interface of $I_0$, SP-WENO results in a very small reconstructed jump, while SP-WENOc leads to a larger jump but which is still an order of magnitude smaller as compared to the jump at the right interface of $I_0$. On the other hand, ENO3 and \net give rise to larger reconstructed jumps at both interfaces of $I_0$, which is the desirable behavior in the vicinity of discontinuities. The impact of the reconstructed jump magnitude can be observed from the numerical results for conservation laws shown in Section \ref{sec:numerical} in the main text.

\begin{figure}
     \centering
     \includegraphics[width=5in]{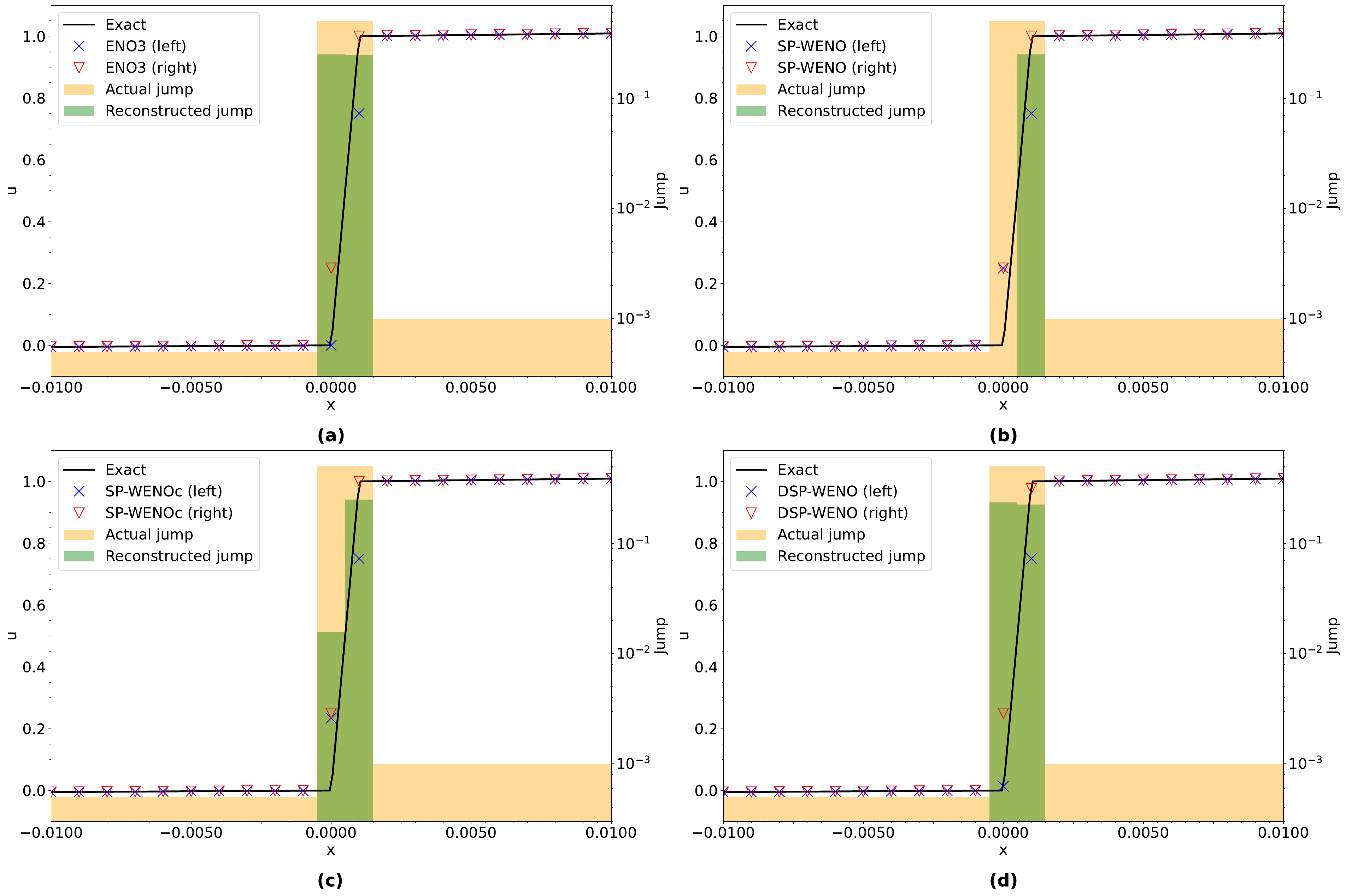}
     \caption{Reconstruction of a simulated discontinuity, which is not well-resolved, using: \textbf{(a)} ENO3, \textbf{(b)} SP-WENO, \textbf{(c)} SP-WENOc, \textbf{(d)} \net.}
     \label{fig:recon_test}
 \end{figure}

\subsection{Linear Advection}\label{sec:sm_LinAdv}

Tables \ref{tab:lin_adv_l2_SM} and \ref{tab:lin_adv_linf_SM} show the errors (measured in the discrete $L^2$ and $L^{\infty}$ norm, respectively) with various reconstructions for both smooth linear advection test cases. As with $L^1$, we observe third-order accuracy in $L^2$ in both test cases (except for ENO 3 in Test case 2). There seems to be slight deterioration in the $L^\infty$-order of convergence (which is much more severe for ENO3 in Test case 2).

\begin{table}[]
    \centering
    \begin{tabular}{|c|c|c c|c c|c c|c c|}
     \hline
     &N & \multicolumn{2}{c|}{ENO3} & \multicolumn{2}{c|}{SP-WENO} & \multicolumn{2}{c|}{SP-WENOc} & \multicolumn{2}{c|}{\net} \\
          & & Error & Rate & Error & Rate & Error & Rate & Error & Rate \\
     \hline
     \parbox[t]{2mm}{\multirow{6}{*}{\rotatebox[origin=c]{90}{Test 1}}} &100  & 1.43e-5 & -    & 6.08e-5 & -    & 5.69e-5 & -    & 5.77e-5 & - \\
     &200  & 1.79e-6 & 3.00 & 8.40e-6 & 2.86 & 7.91e-6 & 2.85 & 5.79e-6 & 3.32 \\
     &400  & 2.24e-7 & 3.00 & 1.17e-6 & 2.85 & 1.13e-6 & 2.81 & 6.89e-7 & 3.07 \\
     &600  & 6.63e-8 & 3.00 & 3.64e-7 & 2.87 & 3.60e-7 & 2.82 & 2.05e-7 & 2.99 \\
     &800  & 2.80e-8 & 3.00 & 1.59e-7 & 2.87 & 1.57e-7 & 2.88 & 8.74e-8 & 2.96 \\
     &1000 & 1.43e-8 & 3.00 & 8.40e-8 & 2.86 & 8.33e-8 & 2.84 & 4.47e-8 & 3.01 \\
     \hline
     \parbox[t]{2mm}{\multirow{6}{*}{\rotatebox[origin=c]{90}{Test 2}}} &100  & 7.02e-4 & -    & 9.88e-4 & -    & 9.71e-4 & -    & 1.14e-3 & - \\
     &200  & 9.44e-5 & 2.90 & 1.35e-4 & 2.87 & 1.34e-4 & 2.86 & 1.37e-4 & 3.05 \\
     &400  & 1.24e-5 & 2.93 & 1.83e-5 & 2.89 & 1.81e-5 & 2.89 & 1.68e-5 & 3.03 \\
     &600  & 4.04e-6 & 2.76 & 5.78e-6 & 2.84 & 5.76e-6 & 2.82 & 5.38e-6 & 2.80 \\
     &800  & 2.51e-6 & 1.66 & 2.50e-6 & 2.92 & 2.48e-6 & 2.94 & 2.49e-6 & 2.67 \\
     &1000 & 2.15e-6 & 0.68 & 1.33e-6 & 2.83 & 1.31e-6 & 2.84 & 1.32e-6 & 2.85 \\
     \hline
    \end{tabular}
    \caption{$L^2$ errors and convergence rates for linear advection smooth tests 1 and 2.}
    \label{tab:lin_adv_l2_SM}
\end{table}

\begin{table}[]
    \centering
    \begin{tabular}{|c|c|c c|c c|c c|c c|}
     \hline
     &N & \multicolumn{2}{c|}{ENO3} & \multicolumn{2}{c|}{SP-WENO} & \multicolumn{2}{c|}{SP-WENOc} & \multicolumn{2}{c|}{\net} \\
          & & Error & Rate & Error & Rate & Error & Rate & Error & Rate \\
     \hline
     \parbox[t]{2mm}{\multirow{6}{*}{\rotatebox[origin=c]{90}{Test 1}}} &100  & 9.20e-6 & -    & 1.05e-4 & -    & 9.77e-5 & -    & 1.13e-4 & - \\
     &200  & 1.10e-6 & 3.06 & 2.02e-5 & 2.38 & 1.91e-5 & 2.36 & 1.11e-5 & 3.35 \\
     &400  & 1.42e-7 & 2.96 & 3.56e-6 & 2.50 & 3.47e-6 & 2.46 & 1.71e-6 & 2.69 \\
     &600  & 4.21e-8 & 2.99 & 1.24e-6 & 2.60 & 1.23e-6 & 2.56 & 5.94e-7 & 2.61 \\
     &800  & 1.73e-8 & 3.10 & 6.42e-7 & 2.30 & 6.33e-7 & 2.31 & 2.98e-7 & 2.39 \\
     &1000 & 8.88e-9 & 2.98 & 3.83e-7 & 2.31 & 3.80e-7 & 2.28 & 1.70e-7 & 2.51 \\
     \hline
     \parbox[t]{2mm}{\multirow{6}{*}{\rotatebox[origin=c]{90}{Test 2}}} &100  & 6.38e-4 & -    & 1.33e-3 & -    & 1.30e-3 & -    & 1.75e-3 & - \\
     &200  & 9.22e-5 & 2.79 & 2.21e-4 & 2.59 & 2.19e-4 & 2.57 & 2.07e-4 & 3.08 \\
     &400  & 1.29e-5 & 2.84 & 4.16e-5 & 2.41 & 4.14e-5 & 2.40 & 2.82e-5 & 2.88 \\
     &600  & 4.44e-6 & 2.62 & 1.45e-5 & 2.59 & 1.45e-5 & 2.59 & 1.15e-5 & 2.21 \\
     &800  & 3.56e-6 & 0.76 & 7.62e-6 & 2.24 & 7.49e-6 & 2.29 & 6.08e-6 & 2.21 \\
     &1000 & 3.24e-6 & 0.43 & 4.45e-6 & 2.41 & 4.37e-6 & 2.41 & 3.84e-6 & 2.06 \\
     \hline
    \end{tabular}
    \caption{$L^{\infty}$ errors and convergence rates for linear advection smooth tests 1 and 2.}
    \label{tab:lin_adv_linf_SM}
\end{table}

\subsection{2D Riemann problem (conf. 12)}\label{sec:sm_2DRiemann_conf12}

Figure \ref{fig:2D_riemann12_SM} shows the density contour plots for the 2D Riemann problem (configuration 12) when solved using ENO3, SP-WENO, SP-WENOc, and \net.

\begin{figure}
    \centering
    \subfigure[ENO3]{\includegraphics[width=2.5in]{2DR_Conf12_eno3.png}}
    \subfigure[SP-WENO]{\includegraphics[width=2.5in]{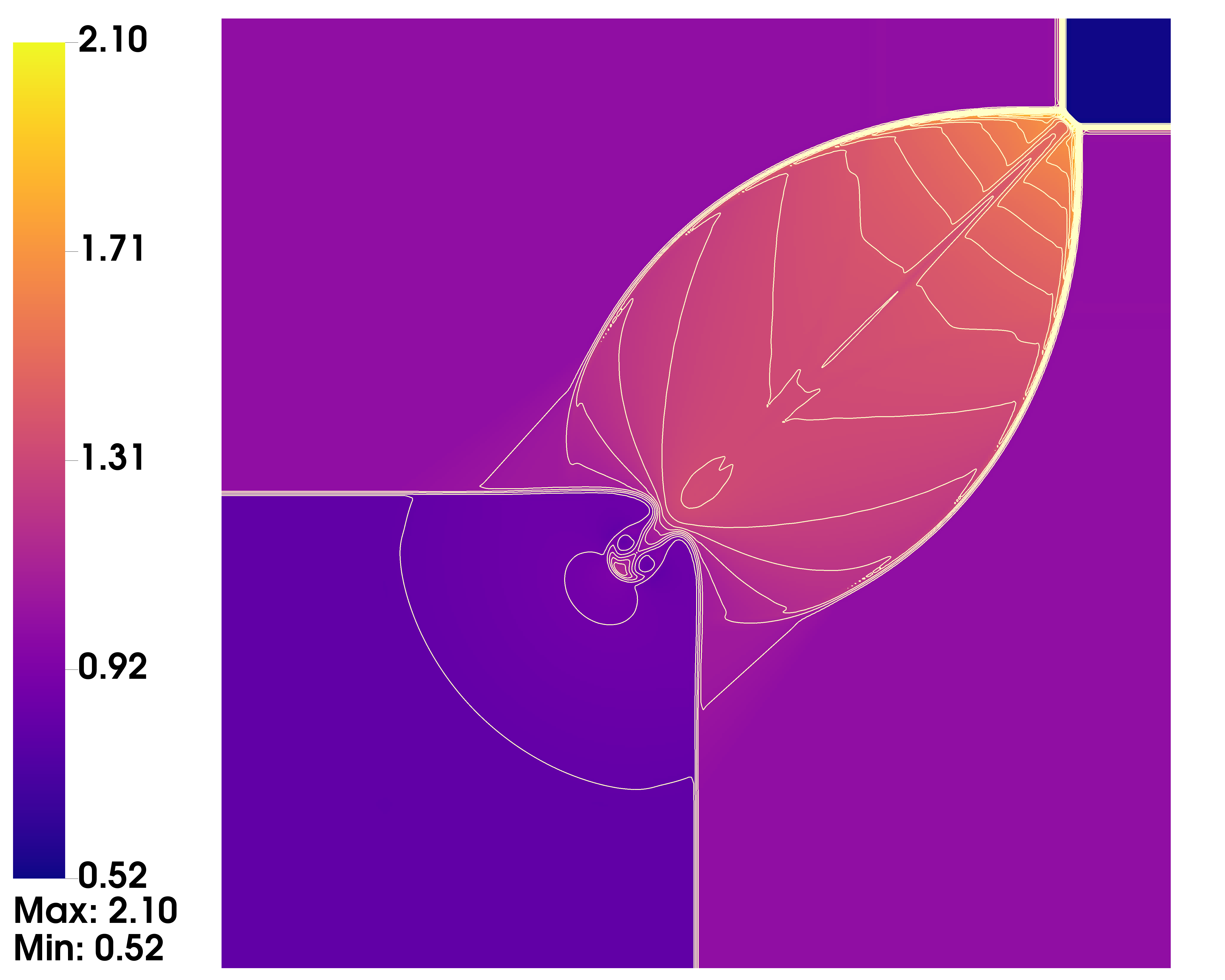}}
    \subfigure[SP-WENOc]{\includegraphics[width=2.5in]{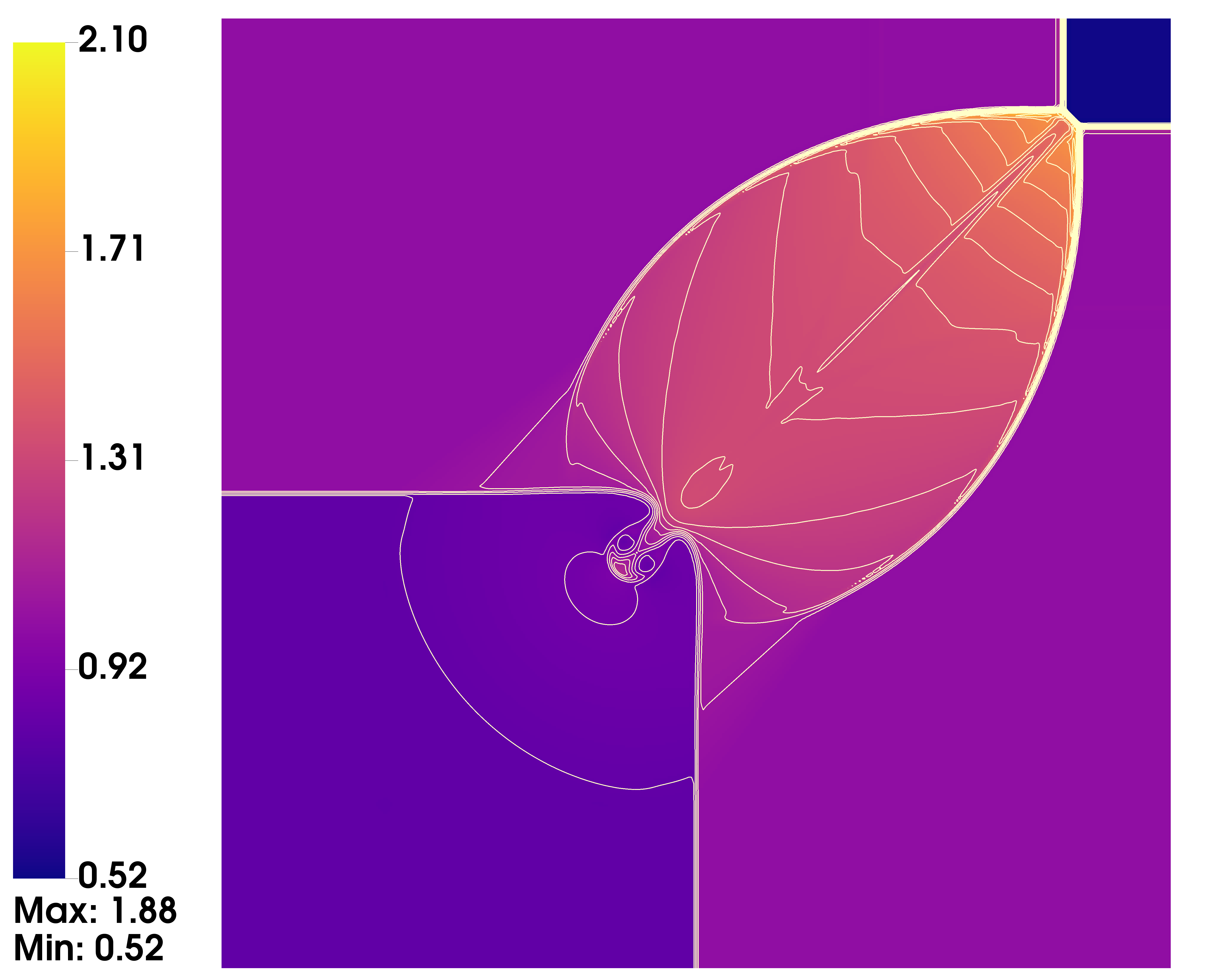}}
    \subfigure[\net]{\includegraphics[width=2.5in]{2DR_Conf12_sp_weno_dl_finalMSE.png}}
    \caption{2D Riemann problem (configuration 12): Density profiles at time $T=0.25$ with 30 contour lines between 0.52 and 2.2. Comparison of different reconstruction methods}
    \label{fig:2D_riemann12_SM}
\end{figure}

\subsection{2D Riemann problem (conf. 3)}\label{sec:sm_2DRiemann_conf3}

Figure \ref{fig:2D_riemann3_rus_SM} shows the density contour plots for the 2D Riemann problem (configuration 3) with Rusanov dissipation when using ENO3, SP-WENO, SP-WENOc, and \net. Figure \ref{fig:2D_riemann3_zoomed_SM} shows a zoomed-in perspective of the carbuncle-like artifact for SP-WENO and \net with the Roe and Rusanov dissipation. We observe that switching to the Rusanov dissipation mitigates the behavior.

\begin{figure}
    \centering
    \subfigure[ENO3]{\includegraphics[width=2.5in]{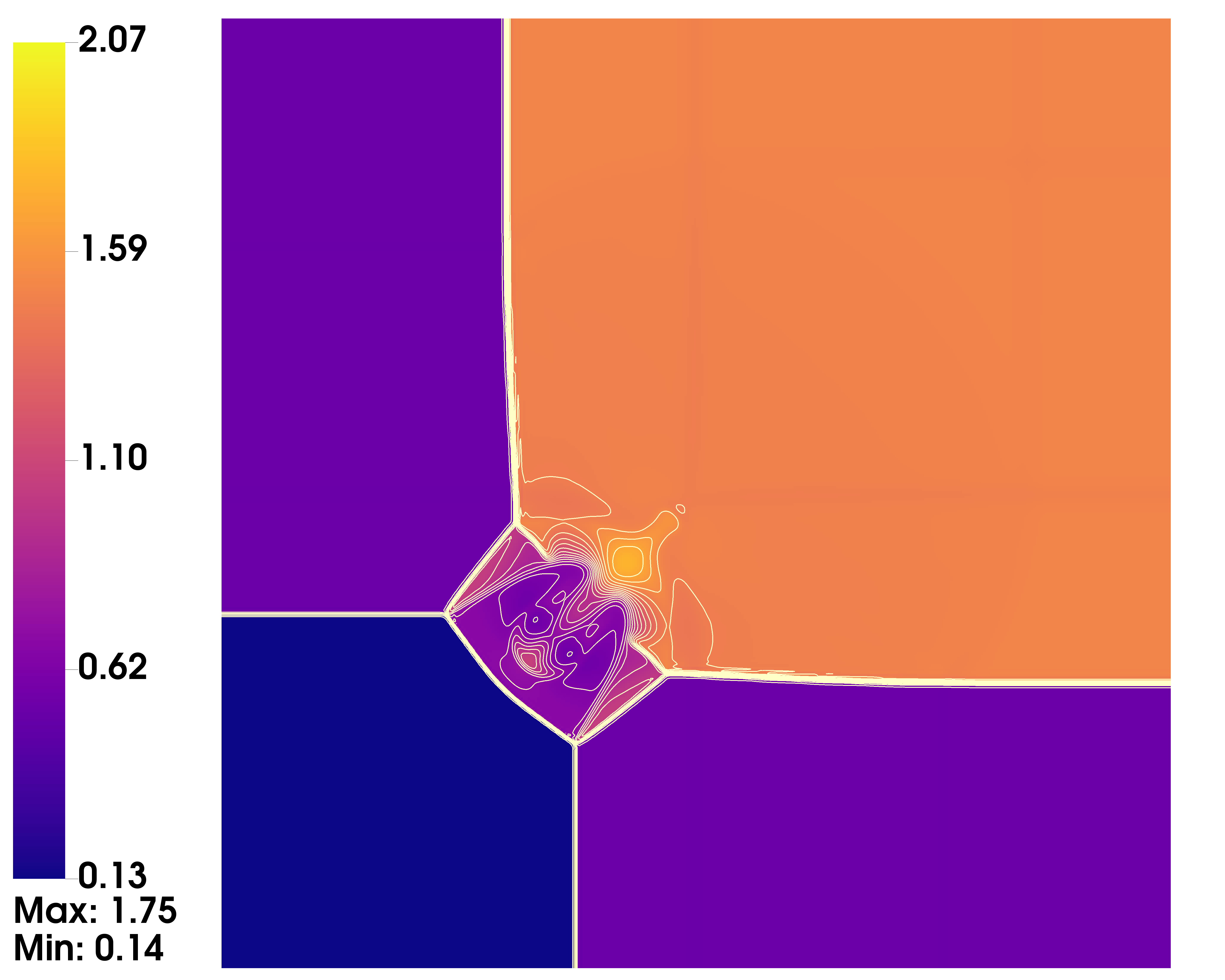}}
    \subfigure[SP-WENO]{\includegraphics[width=2.5in]{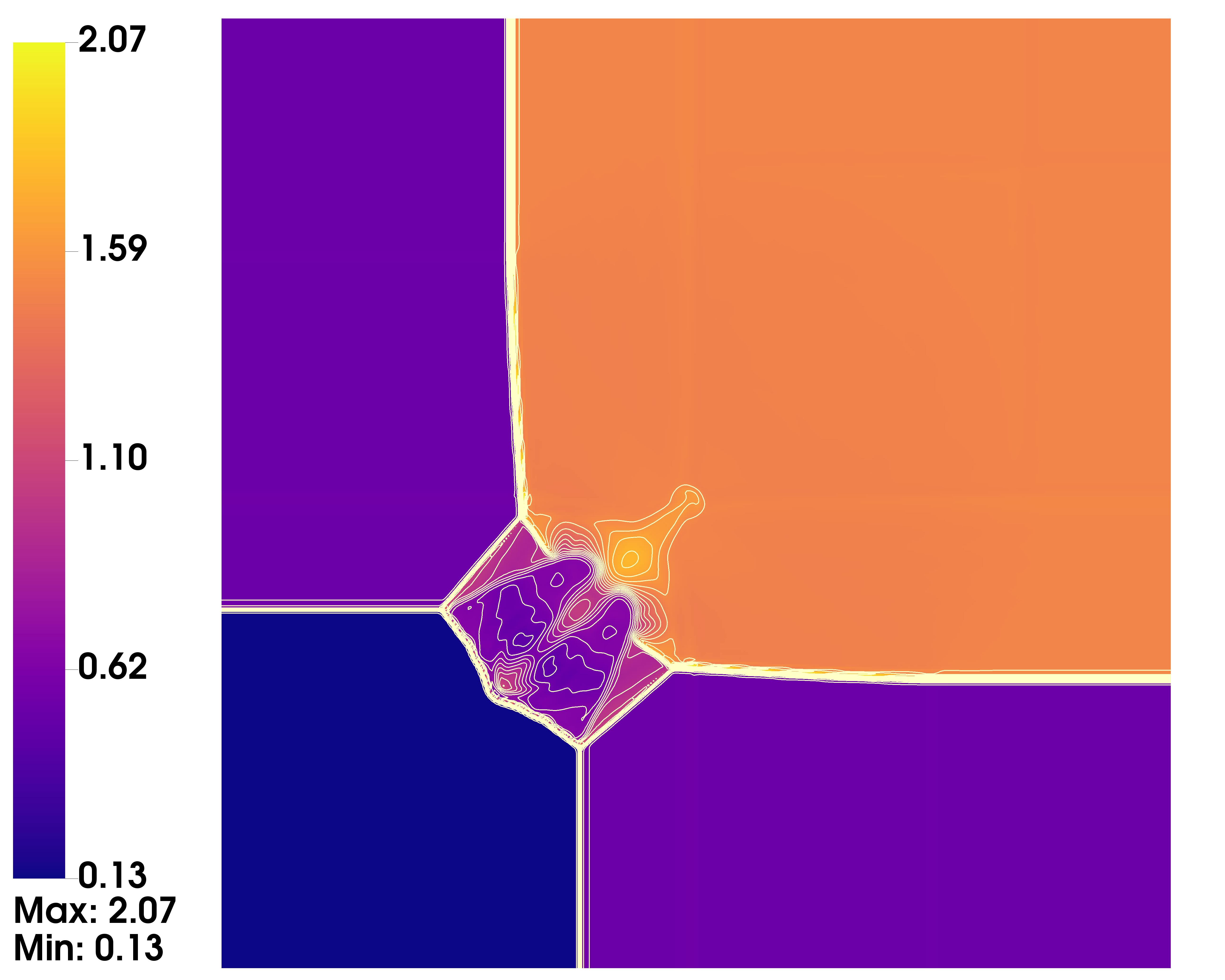}}
    \subfigure[SP-WENOc]{\includegraphics[width=2.5in]{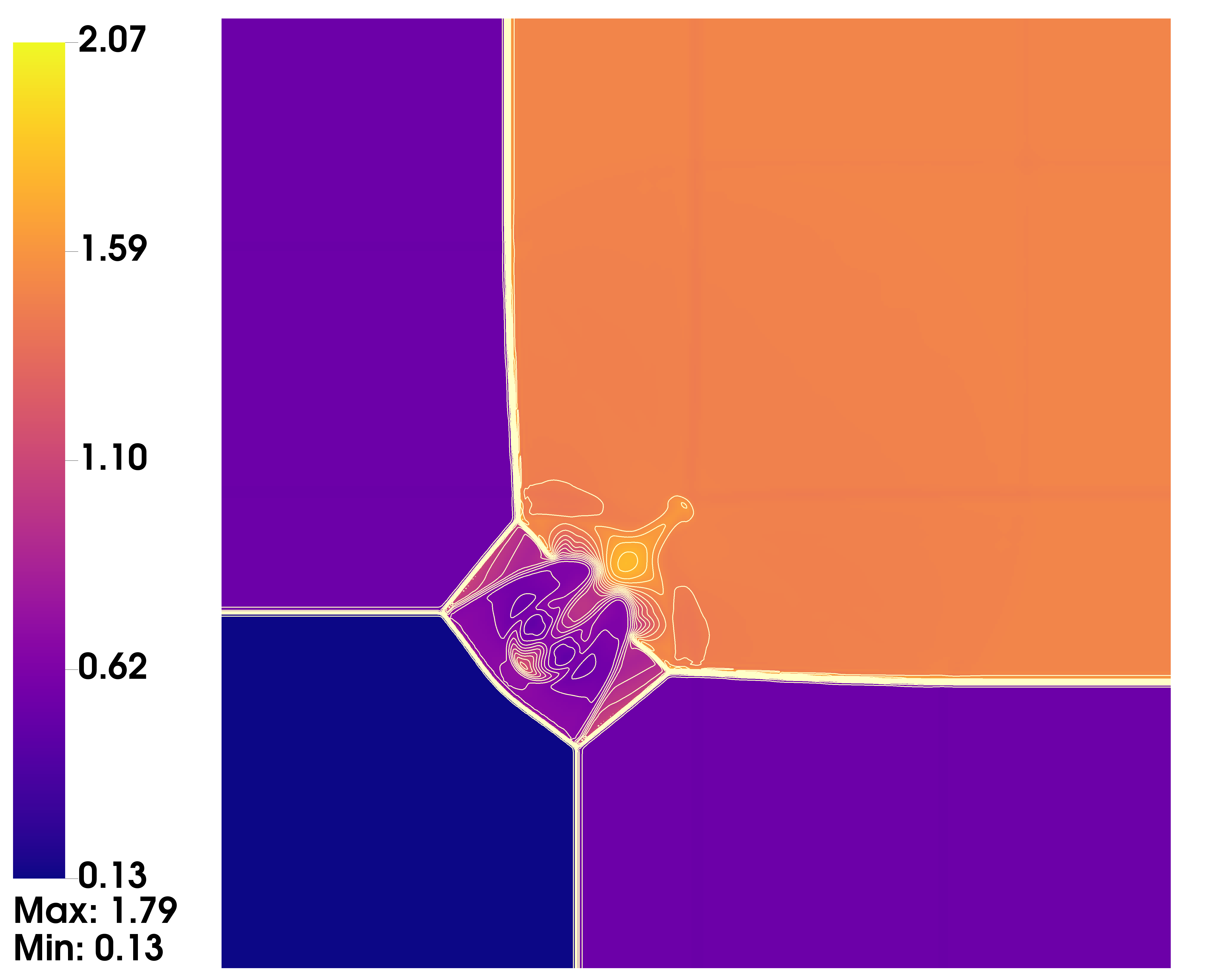}}
    \subfigure[\net]{\includegraphics[width=2.5in]{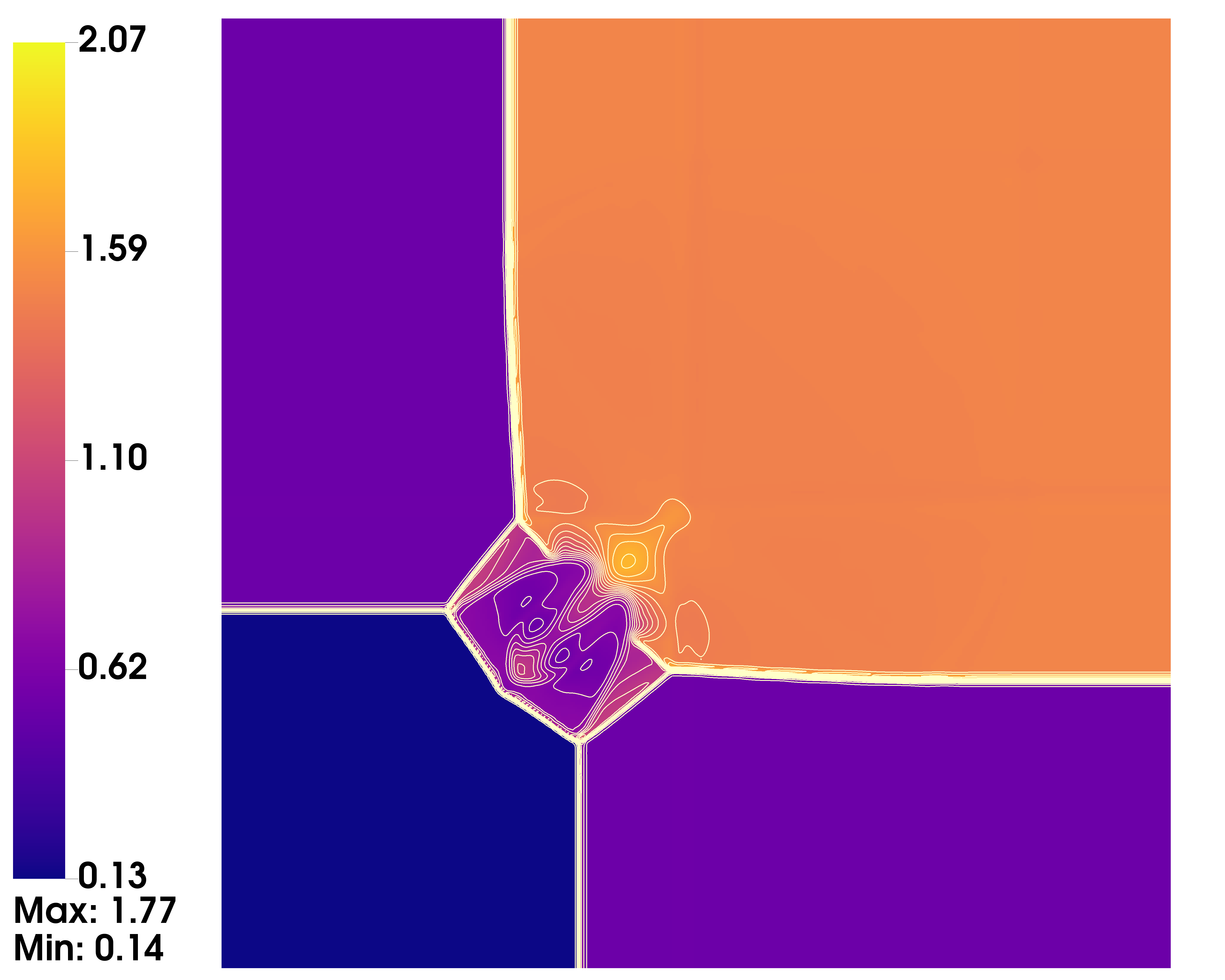}}
    \caption{2D Riemann problem (configuration 3): Density profiles at time $T=0.3$ with 30 contour lines between 0.1 and 2.28.  Comparison of different reconstruction methods}
    \label{fig:2D_riemann3_rus_SM}
\end{figure}

\begin{figure}
    \centering
    \subfigure[SP-WENO Roe]{\includegraphics[width=2.5in]{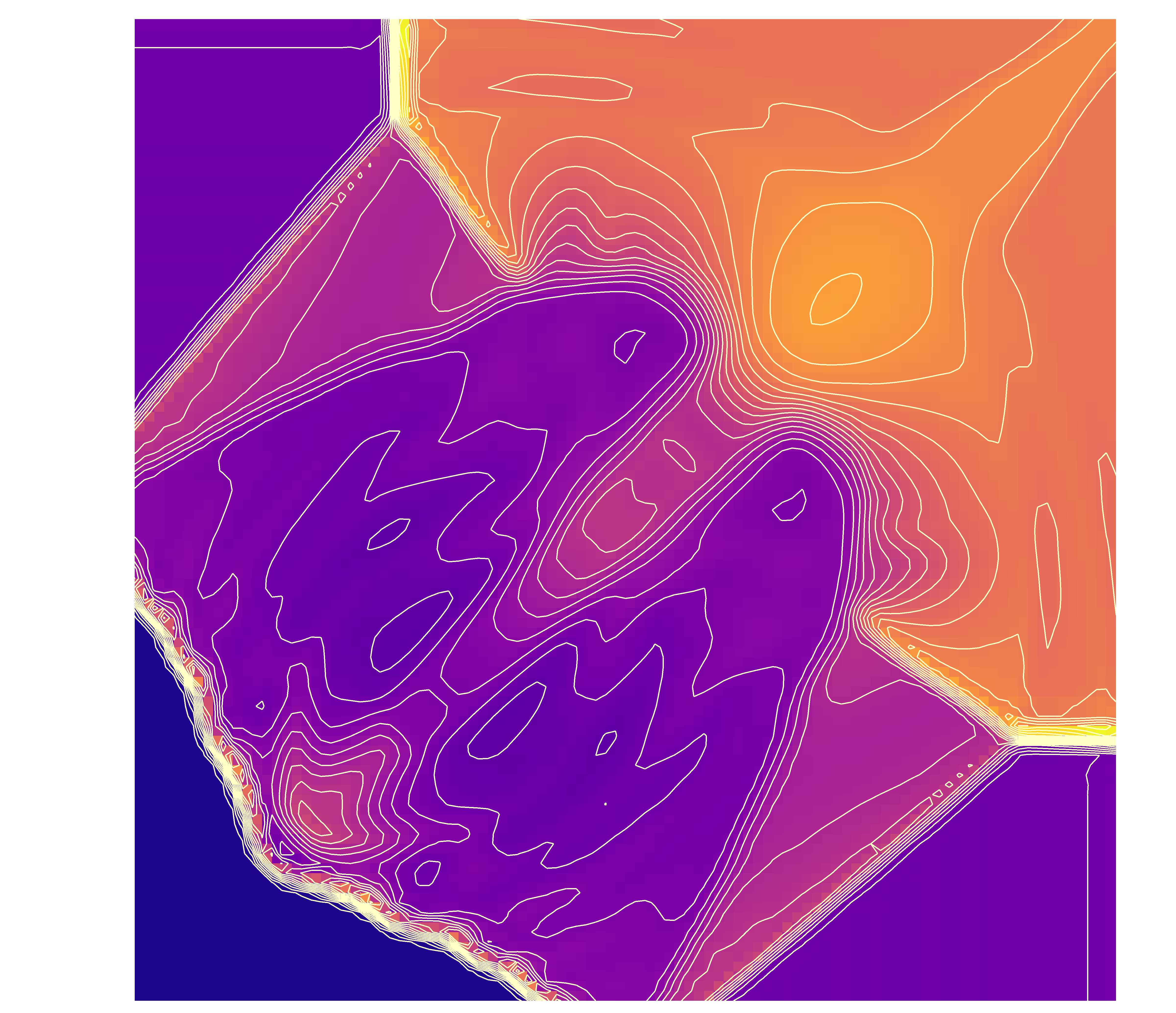}}
    \subfigure[\net Roe]{\includegraphics[width=2.5in]{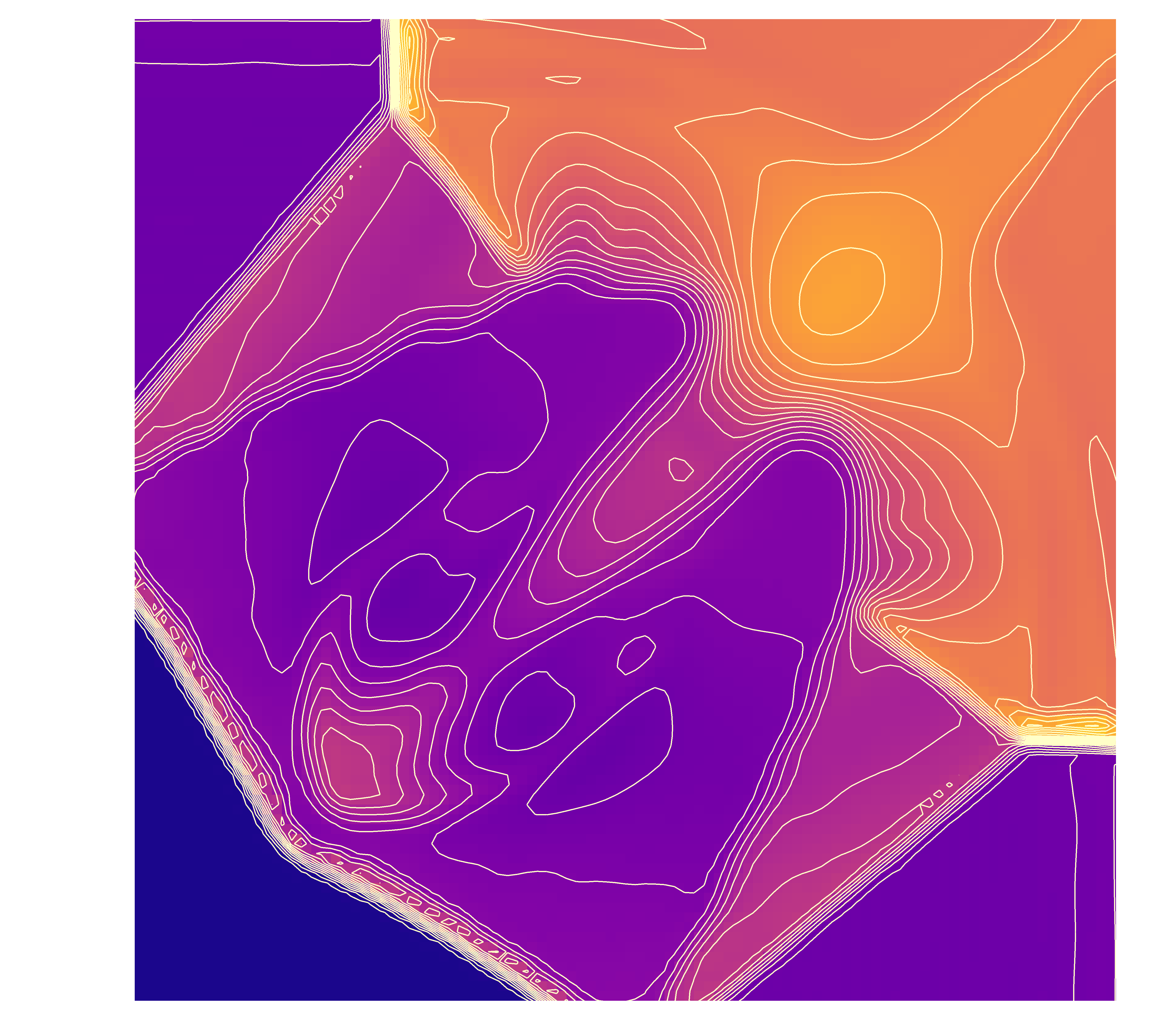}}
    \subfigure[SP-WENO Rusanov]{\includegraphics[width=2.5in]{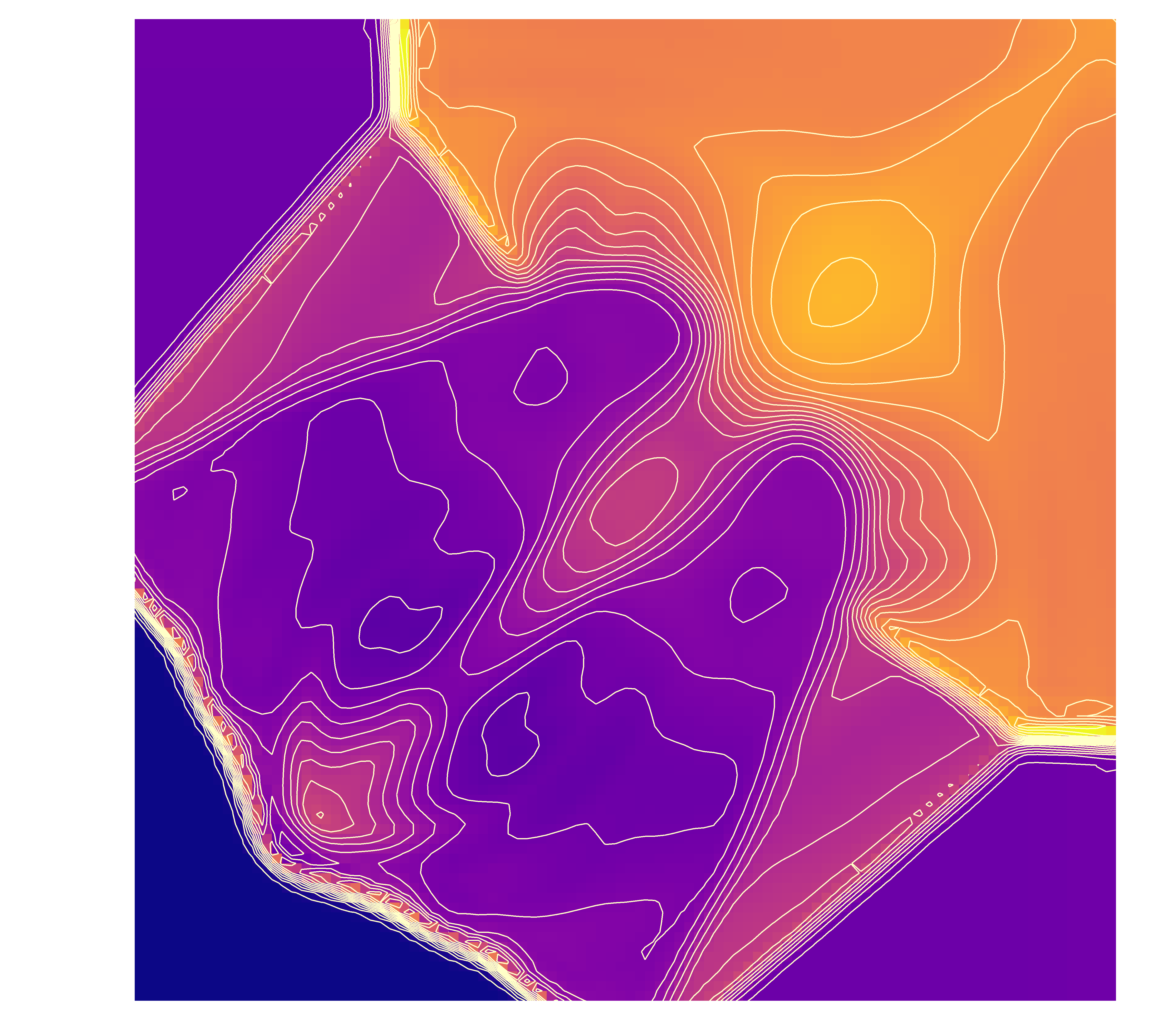}}
    \subfigure[\net Rusanov]{\includegraphics[width=2.5in]{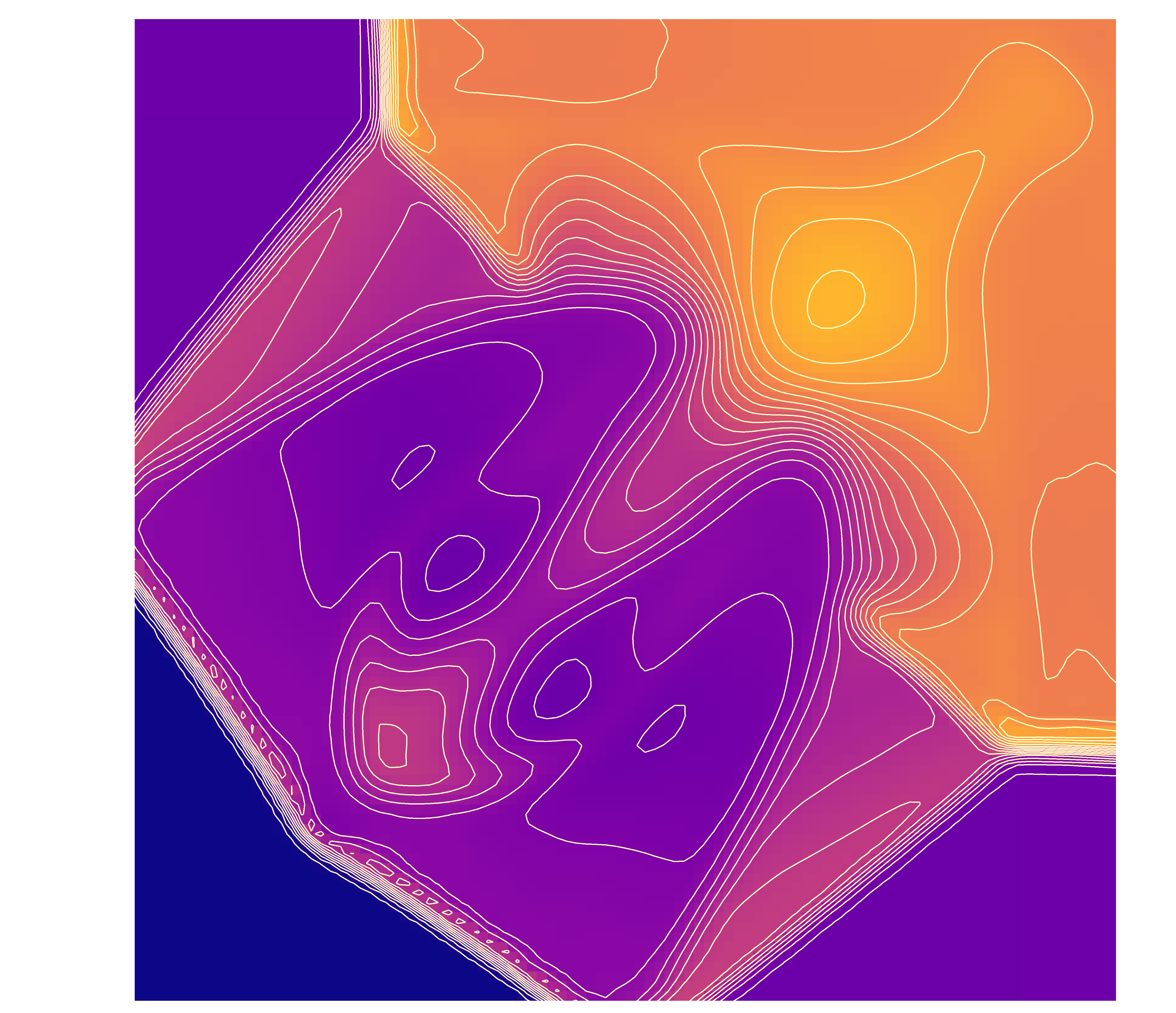}}
    \caption{2D Riemann problem (configuration 3): Zoom-in of the carbuncle-like phenomenon for SP-WENO and \net with Roe and Rusanov dissipation.}
    \label{fig:2D_riemann3_zoomed_SM}
\end{figure}

\subsection{Kelvin-Helmholtz Instability}\label{sec:sm_kelvin-helmholtz}

Figures \ref{fig:kelvin-helmholtz-256_SM} and \ref{fig:kelvin-helmholtz-512_SM} show the density profiles for the Kelvin-Helmholtz instability when solved using ENO3, SP-WENO, SP-WENOc, and \net for  meshes consisting of 256 $\times$ 256 cells and 512 $\times$ 512 cells.

\begin{figure}
    \centering
    \subfigure[ENO3]{\includegraphics[width=2.5in]{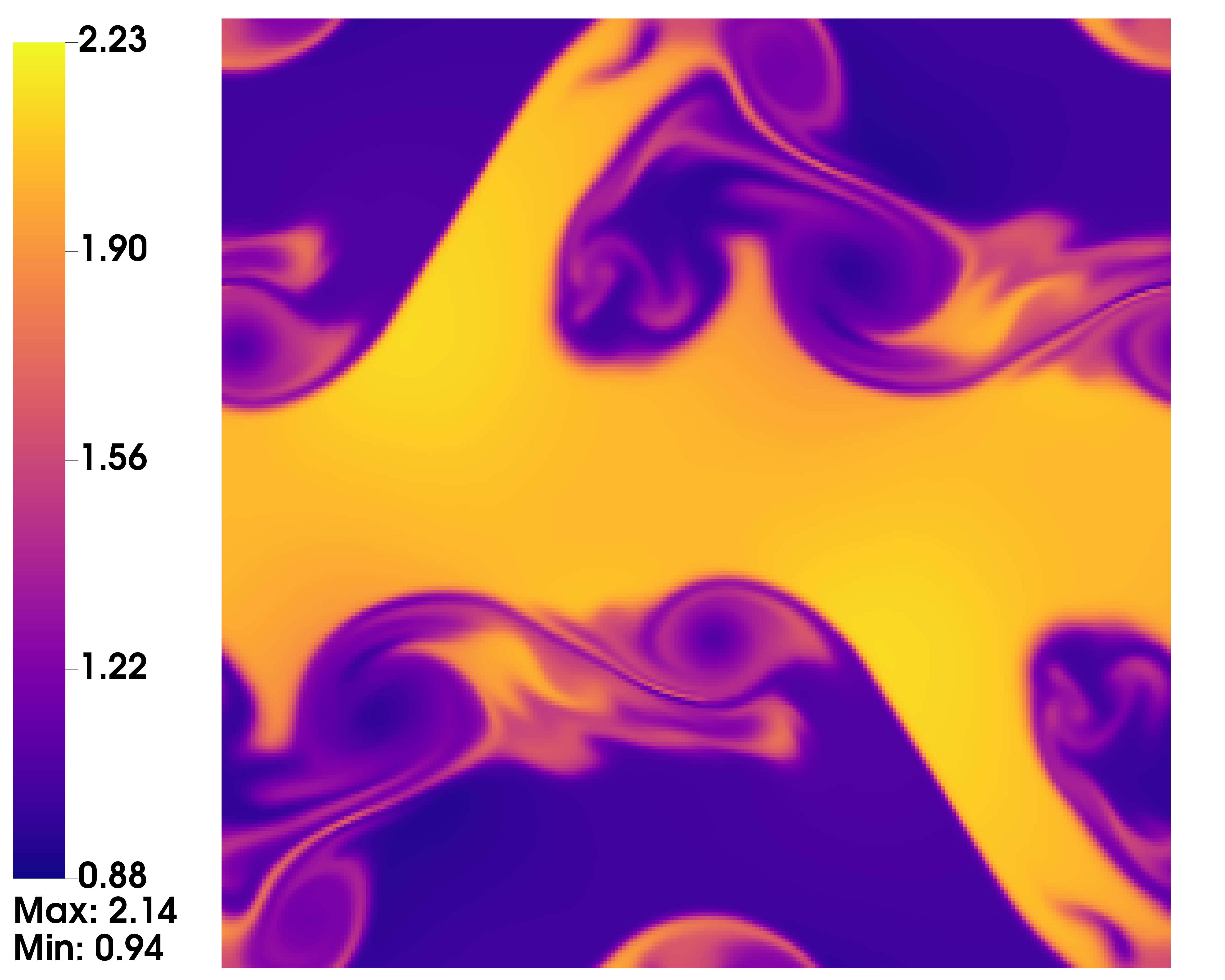}}
    \subfigure[SP-WENO]{\includegraphics[width=2.5in]{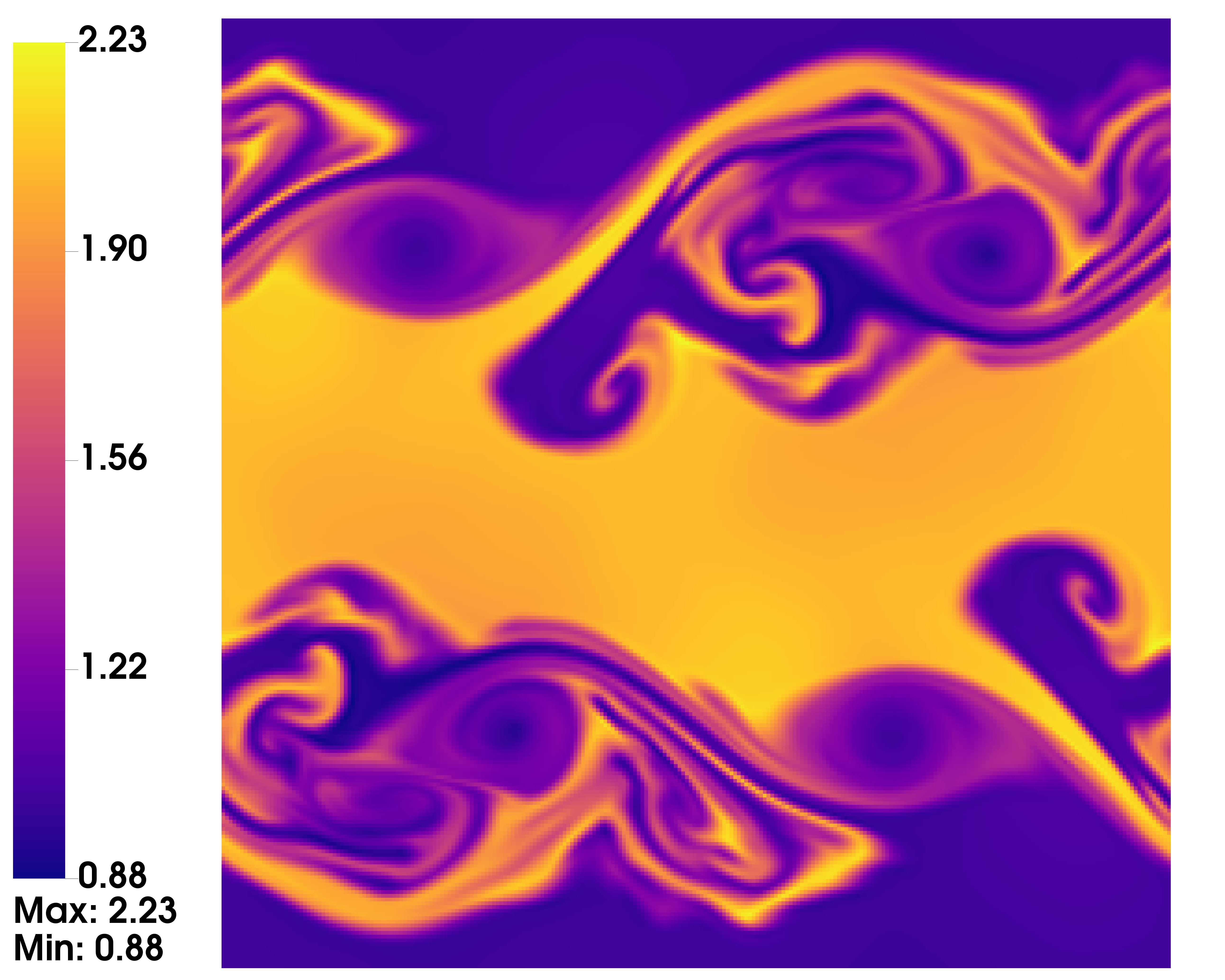}}
    \subfigure[SP-WENOc]{\includegraphics[width=2.5in]{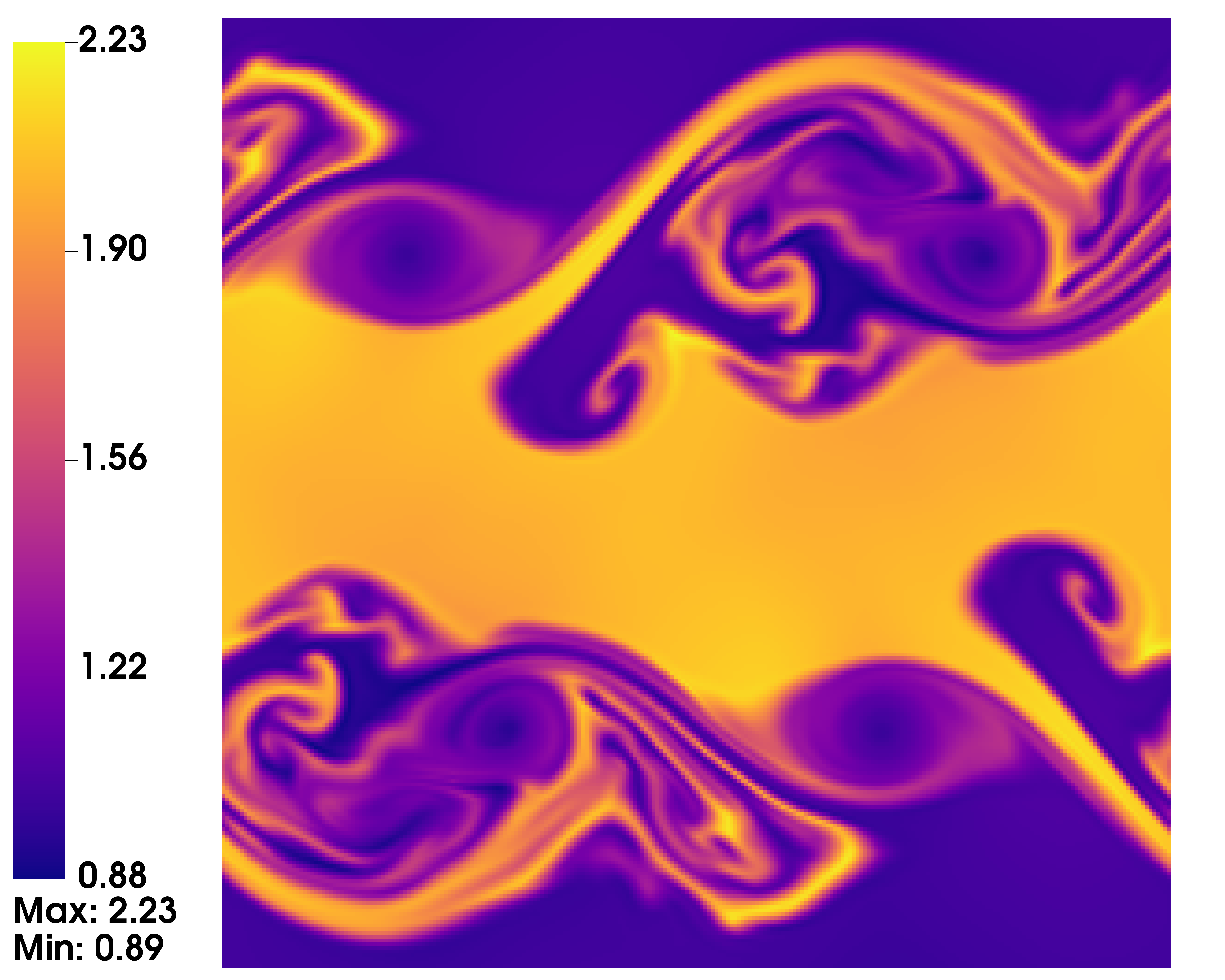}}
    \subfigure[\net]{\includegraphics[width=2.5in]{Kelvin-Helmholtz_256_sp_weno_dl_finalMSE.png}}
    \caption{Kelvin-Helmholtz Instability: Density profiles at time $T=3$. Comparison of different reconstruction methods for $256\times256$ mesh.}
    \label{fig:kelvin-helmholtz-256_SM}
\end{figure}

\begin{figure}
    \centering
    \subfigure[ENO3]{\includegraphics[width=2.5in]{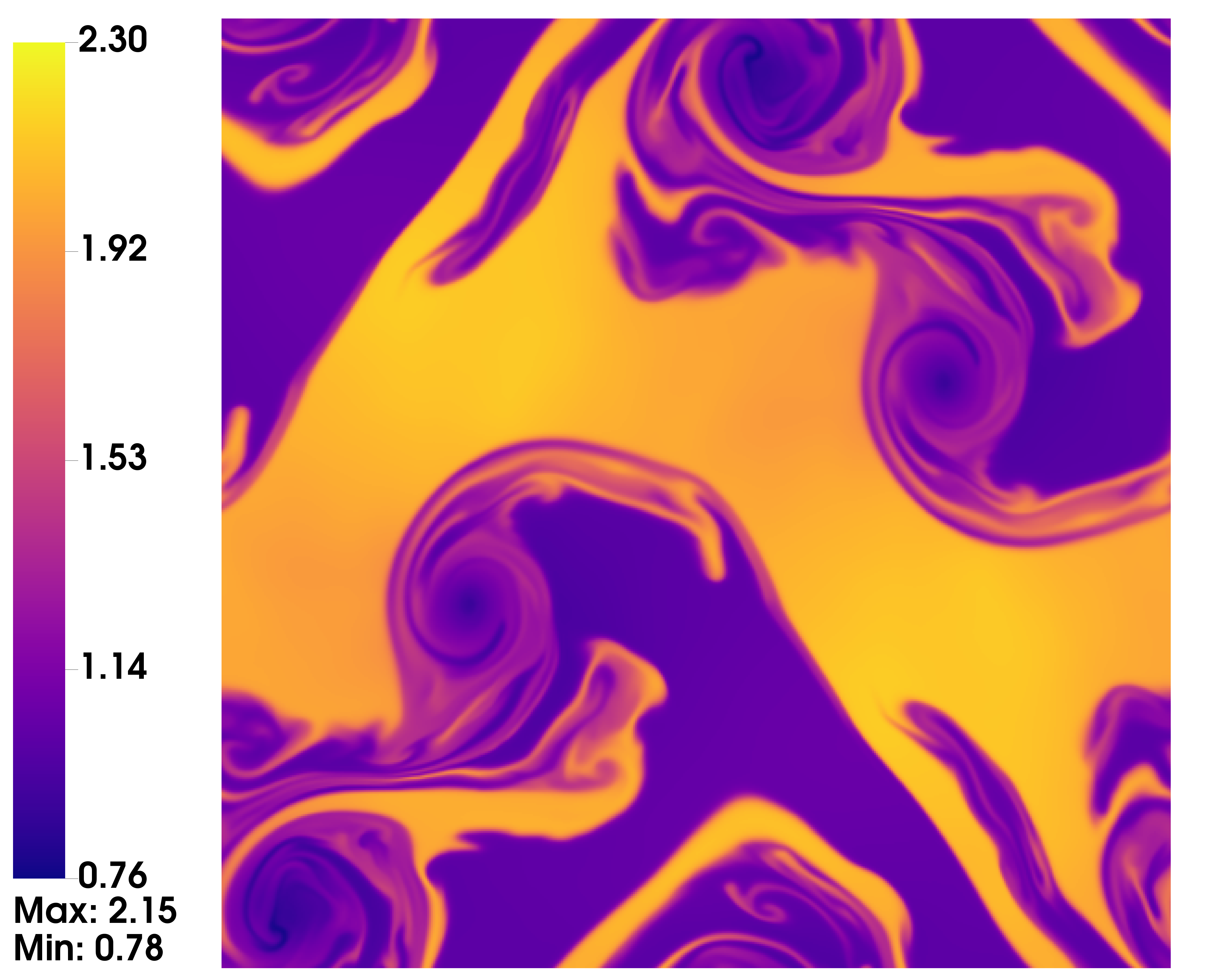}}
    \subfigure[SP-WENO]{\includegraphics[width=2.5in]{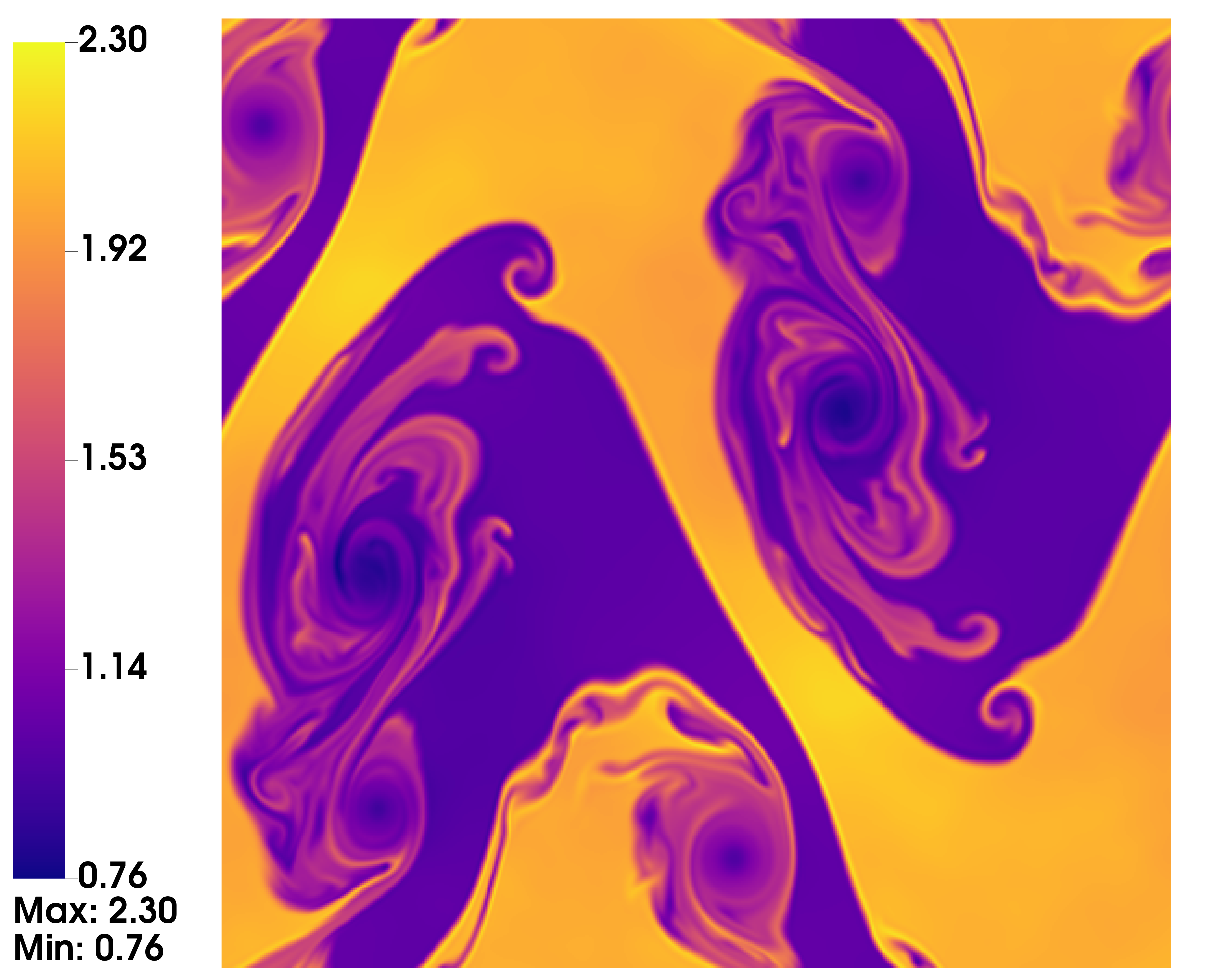}}
    \subfigure[SP-WENOc]{\includegraphics[width=2.5in]{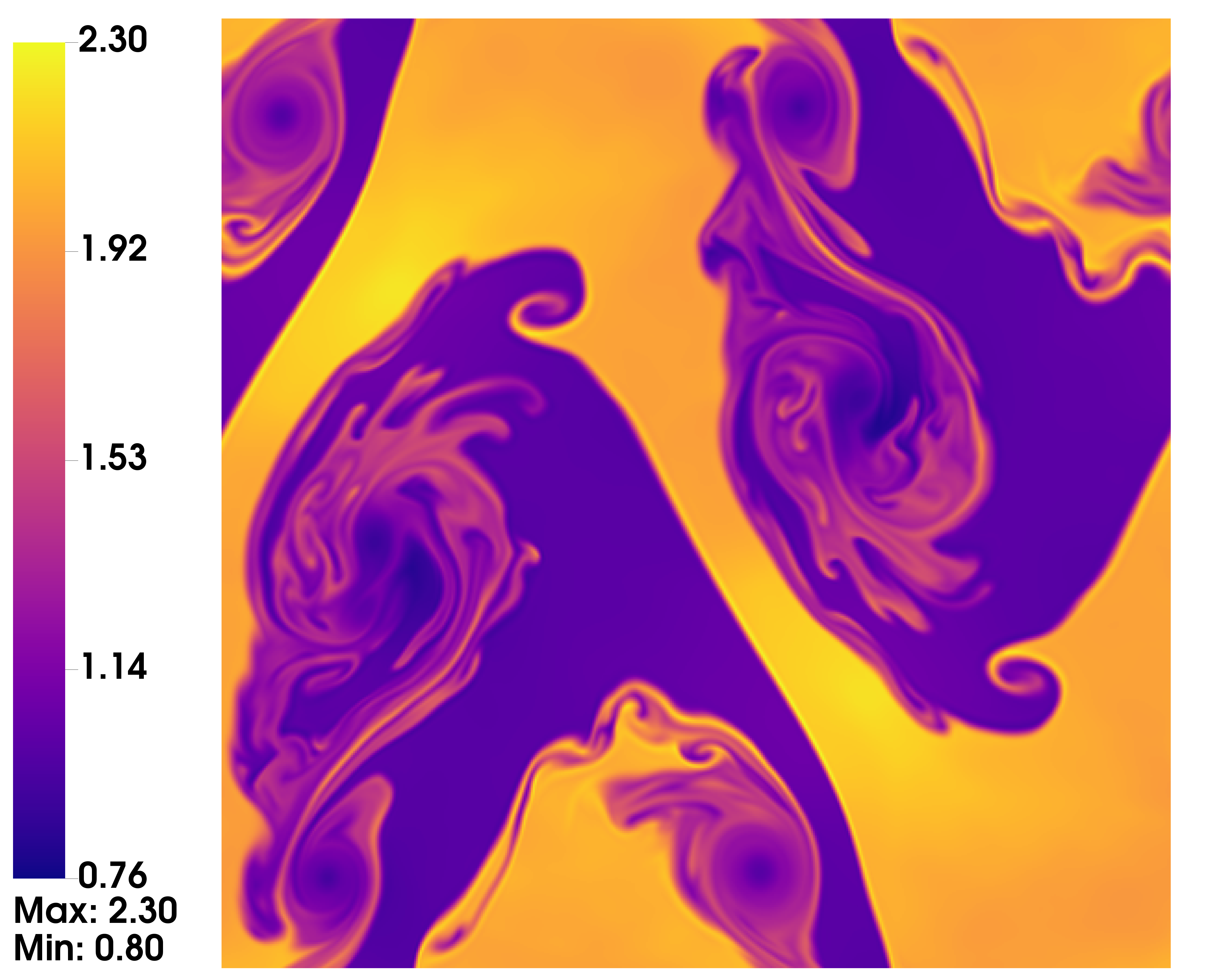}}
    \subfigure[\net]{\includegraphics[width=2.5in]{Kelvin-Helmholtz_512_sp_weno_dl_finalMSE.png}}
    \caption{Kelvin-Helmholtz Instability: Density profiles at time $T=3$. Comparison of different reconstruction methods for $512\times512$ mesh.}
    \label{fig:kelvin-helmholtz-512_SM}
\end{figure}

\end{document}